\documentclass[11pt,letterpaper]{article}
\usepackage[T1]{fontenc}
\usepackage{amsmath,amsthm,amssymb}
\usepackage{newtxtext,newtxmath}
\usepackage[margin=1in]{geometry}
\usepackage{array,booktabs,enumitem,float,url}
\usepackage{microtype,titlesec,caption,xcolor}
\usepackage[authoryear,round]{natbib}
\definecolor{linkblue}{RGB}{38,67,95}
\usepackage[colorlinks,citecolor=linkblue,linkcolor=linkblue,urlcolor=linkblue]{hyperref}
\hypersetup{pdftitle={Bernstein--von Mises theorem for sparse generalized linear models},
  pdfauthor={Hanqing Li and Xuewen Lu},bookmarksnumbered=true}

\titleformat{\section}{\large\bfseries}{\thesection}{.65em}{}
\titleformat{\subsection}{\normalsize\bfseries}{\thesubsection}{.65em}{}
\titleformat{\subsubsection}{\normalsize\itshape}{\thesubsubsection}{.65em}{}
\titlespacing*{\section}{0pt}{14pt plus 3pt minus 2pt}{6pt plus 1pt}
\titlespacing*{\subsection}{0pt}{10pt plus 2pt minus 2pt}{4pt plus 1pt}
\titlespacing*{\subsubsection}{0pt}{8pt plus 2pt minus 1pt}{3pt plus 1pt}

\makeatletter
\renewcommand{\maketitle}{%
  \begin{center}
  {\LARGE\bfseries\@title\par}\vspace{10pt}
  {\large\@author\par}
  \end{center}\vspace{4pt}}
\makeatother
\renewenvironment{abstract}
  {\begin{list}{}{\setlength{\leftmargin}{.18in}\setlength{\rightmargin}{.18in}%
    \setlength{\topsep}{4pt}\setlength{\parsep}{0pt}}\item[]\small
    \noindent\textbf{Abstract.}\enspace\ignorespaces}
  {\end{list}}

\setlist{topsep=3pt plus 1pt minus 1pt,itemsep=1pt plus .5pt,parsep=0pt,partopsep=0pt}

\AtBeginDocument{%
  \setlength{\abovedisplayskip}{5pt plus 2pt minus 2pt}%
  \setlength{\belowdisplayskip}{5pt plus 2pt minus 2pt}%
  \setlength{\abovedisplayshortskip}{3pt plus 2pt minus 2pt}%
  \setlength{\belowdisplayshortskip}{3pt plus 2pt minus 2pt}%
  \setlength{\textfloatsep}{10pt plus 2pt minus 2pt}%
  \setlength{\floatsep}{8pt plus 2pt minus 2pt}%
  \setlength{\intextsep}{8pt plus 2pt minus 2pt}%
}

\newsavebox{\arxivtablebox}
\newlength{\tabledim}
\newcommand{\tbl}[2]{%
  \sbox{\arxivtablebox}{#2}%
  \setlength{\tabledim}{\dimexpr\textwidth/2-\wd\arxivtablebox/2\relax}%
  \captionsetup{width=\wd\arxivtablebox}%
  \caption{#1}%
  \noindent\makebox[\textwidth][c]{\usebox{\arxivtablebox}}\par
}

\newtheoremstyle{preprintplain}{6pt plus 1pt minus 1pt}{6pt plus 1pt minus 1pt}
  {\itshape}{}{\bfseries}{.}{.5em}{}
\newtheoremstyle{preprintdefinition}{6pt plus 1pt minus 1pt}{6pt plus 1pt minus 1pt}
  {\normalfont}{}{\bfseries}{.}{.5em}{}
\theoremstyle{preprintplain}
\newtheorem{lemma}{Lemma}
\newtheorem{corollary}{Corollary}
\newtheorem{theorem}{Theorem}
\newtheorem{proposition}{Proposition}
\theoremstyle{preprintdefinition}
\newtheorem{assumption}{Assumption}
\newtheorem{remark}{Remark}
\newtheorem{example}{Example}
\DeclareMathOperator*{\argmax}{arg\,max}
\newcommand{\dif}{\,\mathrm{d}}
\newcommand{\mX}{\mathrm{X}}
\newcommand{\mF}{\mathrm{F}}
\newcommand{\mW}{\mathrm{W}}
\newcommand{\mK}{\mathrm{K}}

\newcommand{\mI}{\mathrm{I}}
\newcommand{\mP}{\mathbb{P}}
\newcommand{\mE}{\mathbb{E}}
\newcommand{\TV}{\mathrm{TV}}

\newcommand{\e}{\mathrm e}
\newcommand{\psiEff}{\psi_{n,\mathrm{ef}}}
\newcommand{\cS}{\mathcal S}
\newcommand{\cB}{\mathcal B}

\title{Bernstein--von Mises theorem for sparse\\generalized linear models}
\author{\href{https://orcid.org/0009-0001-0493-237X}{Hanqing Li}\qquad
  \href{https://orcid.org/0000-0002-0905-2697}{Xuewen Lu}\\[4pt]
  {\small Department of Mathematics and Statistics, University of Calgary}\\[2pt]
  {\small\href{mailto:hanqing.li@ucalgary.ca}{hanqing.li@ucalgary.ca}\qquad
  \href{mailto:xlu@ucalgary.ca}{xlu@ucalgary.ca}}}
\date{}

\begin{document}
\maketitle

\begin{abstract}
We establish an oracle Bernstein--von Mises theorem for high-dimensional sparse generalized linear models under an outcome-independent spike-and-slab prior. The posterior consistently recovers the true active set and converges in total variation to the Gaussian law that would arise if this set were known in advance, even when the active dimension grows. The result holds for both ordinary and fractional posteriors, the latter with the expected variance inflation under tempering. In fixed-design logistic regression, a sufficient lower bound on the squared minimum signal is proportional to the logarithm of the ambient dimension divided by sample size. Separate likelihood controls for model selection and oracle approximation allow the Gaussian approximation to use a weaker curvature condition. We verify the conditions for six canonical and noncanonical models under fixed and random designs, including Poisson regression with potentially unbounded Fisher weights. The results also yield oracle-rate contraction, credible regions on the recovered support and pointwise frequentist coverage.
\end{abstract}

\begingroup\small
\noindent\textit{Keywords.} Bernstein--von Mises theorem; Fractional posterior;
Generalized linear model; Noncanonical link; Random design; Spike-and-slab prior;
Variable selection.\par
\noindent\textit{MSC 2020.} Primary 62G20, 62F15; secondary 62J12.\par
\endgroup

\section{Introduction}

The Bernstein--von Mises (BvM) theorem connects Bayesian uncertainty quantification with frequentist inference by approximating the posterior with a Gaussian distribution centred at an efficient estimator \citep{le1986asymptotic,van1998asymptotic}. Classical theory is formulated for regular models with fixed dimension. When the dimension grows, establishing the Gaussian approximation becomes more difficult. Existing extensions include increasing-dimensional Gaussian regression and finite-sample semiparametric bounds \citep{bontemps2011bernstein,panov2015finite}. Sparse high-dimensional models introduce a further complication because the active coordinates are unknown and must be selected. The inferential problem therefore has two parts. The posterior must identify the active set and must quantify uncertainty on that set as accurately as if it had been known in advance.

The requirement that a procedure performs as well as one that knows the active set is expressed by the classical oracle property \citep{fan2001variable}. In the Bayesian setting, \citet{jiang2019oracle} developed a corresponding posterior oracle property and showed that model-selection consistency implies this property. An oracle BvM theorem additionally requires a Gaussian approximation on the true model, possibly with growing active dimension. Spike-and-slab priors make the support an explicit posterior variable, with established contraction and selection theory in Gaussian regression \citep{narisetty2014bayesian,castillo2015bayesian,martin2017empirical,belitser2020empirical}. For generalized linear models (GLMs), earlier work addresses many-covariate normal approximations \citep{ghosal1997normal}, sparse estimation and model selection \citep{jiang2007bayesia,liang2013bayesian,abramovich2016model}, and contraction for canonical and noncanonical links \citep{jeong2021posterior}. A recent review is given by \citet{banerjee2026bayesian}.

At the theorem level, \citet{tang2024empirical} is the closest comparator. Their result combines exact selection with a growing-dimensional oracle Gaussian approximation for canonical and noncanonical links. They use an empirical prior centred at support-specific maximum-likelihood estimators and scaled by local information. Their analysis assumes a fractional power $\alpha<1$. The related empirical-prior analysis of \citet{lee2025advances} also assumes $\alpha<1$. It is restricted to canonical links and does not establish a growing-dimensional Gaussian approximation. The outcome-independent conjugate construction of \citet{filippozzi2026bayesian} covers selection for growing-dimensional canonical GLMs, but does not establish a growing-active-dimension Gaussian approximation or cover fractional posteriors and noncanonical links. Known-support growing-dimensional GLM BvM theory is available \citep{katsevich2025improved}, but it does not address model uncertainty or exact support recovery. These analyses do not provide an oracle BvM theorem for ordinary, outcome-independent spike-and-slab posteriors across canonical and noncanonical GLMs.

We establish an oracle Bernstein--von Mises theorem for sparse GLMs under a spike-and-slab prior chosen independently of the observed outcomes. The posterior recovers the true active set and has the same asymptotic Gaussian law as if this set were known in advance, even when its size grows. The result covers canonical and noncanonical links and applies to both ordinary and fractional posteriors. For fixed-design logistic regression, our sufficient squared minimum-signal bound is of order $\log p/n$. In the comparison in Table~\ref{tab:closest-work}, this improves on \citet{tang2024empirical} and matches the order in \citet{lee2025advances}. For this logistic benchmark, the sufficient sparsity condition $s_0^2\log p=o(n)$ is weaker than the cubic condition $s_0^3\log p=o(n)$ in both studies. We verify the conditions for six models under fixed and random designs, including Poisson regression with potentially unbounded Fisher weights. The results also yield oracle-rate contraction and credible regions, with pointwise frequentist coverage under the stated sampling-calibration conditions.

\begin{table}[!t]
\centering
\begingroup
\setlength{\tabcolsep}{1.5pt}
\renewcommand{\arraystretch}{1.0}
\tbl{Signal and sparsity conditions for fixed-design logistic regression.}{%
\footnotesize
\begin{tabular}{@{}
>{\raggedright\arraybackslash}p{0.14\textwidth}
>{\raggedright\arraybackslash}p{0.24\textwidth}
>{\raggedright\arraybackslash}p{0.18\textwidth}
>{\raggedright\arraybackslash}p{0.21\textwidth}
>{\raggedright\arraybackslash}p{0.19\textwidth}@{}}
\toprule
Work & prior and power & signal threshold & sparsity condition & Gaussian limit\\
\midrule
\citet{tang2024empirical}
& empirical Gaussian; $\alpha\in(0,1)$
& $b_0^2\gtrsim s_0\log p/n$
& $s_0^3\log p=o(n)$
& $\mathcal N(\widehat\beta,\varrho_{\mathrm{TM}}\widehat J^{-1})$\\
\addlinespace[2pt]
\citet{lee2025advances}
& empirical Gaussian; $\alpha\in(0,1)$
& $b_0^2\gtrsim\log p/n$
& $s_0^3\log p=o(n)$
& not established\\
\addlinespace[2pt]
This paper
& outcome-independent slab; $\alpha\in(0,1]$
& $b_0^2\gtrsim\log p/n$
& $s_0^2\log p=o(n)$
& $\mathcal N(\widehat\beta,\alpha^{-1}\widehat J^{-1})$\\
\bottomrule
\end{tabular}}
\label{tab:closest-work}

\vspace{1pt}
\parbox{\dimexpr\textwidth-2\tabledim\relax}{\scriptsize $n,p,s_0,b_0$: sample size, dimension, sparsity and minimum signal. $\widehat\beta,\widehat J$: oracle MLE and observed information; $\varrho_{\mathrm{TM}}$: Tang--Martin covariance multiplier. See Supplement~\ref{supp-sec:logistic-benchmark-comparison}.}
\endgroup
\end{table}

\section{Model and prior}
\label{sec:model-prior}

Let $Y=(Y_1,\ldots,Y_n)^\top$ be independent observations with density
\begin{equation}\label{eq:exponential-family}
f_{i,\beta}(y_i)=\exp\left[\frac{y_i\theta_i-b(\theta_i)}{\tau_i}+k(y_i,\tau_i)\right],
\end{equation}
where $\tau_i>0$ is known and $b$ is strictly convex on the interior $\Theta$ of its natural-parameter domain. Let $\mathcal M=b'(\Theta)$ be the mean domain. We assume that the response-scale link $h:\mathcal M\to\mathbb R$ is a three times continuously differentiable bijection with $h'>0$ on $\mathcal M$, and put $\xi=(b')^{-1}\circ h^{-1}:\mathbb R\to\Theta$. Thus every linear predictor considered below determines a valid natural parameter. Let $X_i\in\mathbb R^p$ be fixed predictors, let $\mX=(X_1,\ldots,X_n)^\top$, $\eta_i=X_i^\top\beta$, and $\theta_i=\xi(\eta_i)$. The canonical case is $\xi(\eta)=\eta$. Put $K(Y)=\sum_{i=1}^n k(Y_i,\tau_i)$. The log likelihood is
\[
\ell_n(\beta)=\sum_{i=1}^n\frac{Y_i\xi(X_i^\top\beta)-b\{\xi(X_i^\top\beta)\}}{\tau_i}+K(Y).
\]

The data are generated from \eqref{eq:exponential-family} at a correctly specified parameter $\beta^0\in\mathbb R^p$. Throughout the fixed-design theory, $\mP_{\beta^0}$ and $\mE_{\beta^0}$ denote probability and expectation under the joint sampling law of $Y$ in \eqref{eq:exponential-family} at $\beta^0$, conditional on $\mX$. For any $\beta$, write $S_\beta=\{j:\beta_j\ne0\}$. The true support is $S_0=\{j:\beta_j^0\ne0\}$, its sparsity is $s_0=|S_0|$, and its minimum signal is $b_0=\min_{j\in S_0}|\beta_j^0|$. For $S\subseteq\{1,\ldots,p\}$, let $\beta_S$ denote the corresponding coordinate vector and let $\widetilde\beta_S\in\mathbb R^p$ be its zero-padded extension, defined by $(\widetilde\beta_S)_S=\beta_S$ and $(\widetilde\beta_S)_{S^c}=0$. Write $\ell_{n,S}(\beta_S)=\ell_n(\widetilde\beta_S)$ and, whenever the maximizer is unique, define $\widehat\beta_S=\argmax_{\beta_S}\ell_{n,S}(\beta_S)$ and $\widehat J_S=-\nabla_S^2\ell_{n,S}(\widehat\beta_S)$.

The score at the truth is $\Delta=\nabla\ell_n(\beta^0)$, and the corresponding Fisher information is $\mF^0=-\mE_{\beta^0}\nabla^2\ell_n(\beta^0)=\mX^\top\mW_0\mX$, where
\[
\mW_0
=\operatorname{diag}\left[
\frac{b''\{\xi(\eta_i^0)\}\{\xi'(\eta_i^0)\}^2}{\tau_i}
\right]_{i=1}^n,
\qquad \eta_i^0=X_i^\top\beta^0.
\]
For index sets $A,B\subseteq\{1,\ldots,p\}$, let $\Delta_A$ be the corresponding subvector of $\Delta$, put $\mF_A^0=\mF^0|_{A\times A}$, and write $\mF_{AB}^0=\mF^0|_{A\times B}$ for a cross-block.

For any index set $A$, let $\delta_A$ denote the Dirac probability measure at $0_A\in\mathbb R^{|A|}$. We use a spike-and-slab prior \citep{mitchell1988bayesian,george1993variable,johnson2012bayesian}. First draw a support size $s$ from a mass function $\pi_p$ and then draw $S$ uniformly among the $\binom ps$ supports of that size. Finally, draw $\beta_S$ from a density $g_S$ and set $\beta_{S^c}=0$. Thus
\[
\Pi_n(S,\mathrm d\beta)
=\frac{\pi_p(|S|)}{\binom p{|S|}}
g_S(\beta_S)\dif\beta_S\,\delta_{S^c}(\mathrm d\beta_{S^c}).
\]

For a fixed $\alpha\in(0,1]$, the powered posterior is
\[
\Pi_{n,\alpha}(B\mid Y)
=\frac{\int_B\exp\{\alpha\ell_n(\beta)\}\Pi_n(\mathrm d\beta)}
{\int\exp\{\alpha\ell_n(\beta)\}\Pi_n(\mathrm d\beta)}.
\]
The ordinary posterior is the endpoint $\alpha=1$. For $\alpha<1$, tempering reduces the influence of the likelihood relative to the prior \citep{bhattacharya2019bayesian,alquier2020concentration}.
For a support $S$, define the powered marginal likelihood
\[
\mathcal Z_{n,\alpha}(S)
=\int_{\mathbb R^{|S|}}\e^{\alpha\ell_{n,S}(\beta_S)}g_S(\beta_S)\dif\beta_S.
\]
Then $\Pi_{n,\alpha}(S_\beta=S\mid Y)$ is proportional to
$\pi_p(|S|)\mathcal Z_{n,\alpha}(S)/\binom p{|S|}$.

\section{Assumptions}
\label{sec:assumptions}

Fix a sufficiently large integer $K_0>1$, put $s_n^\dagger=K_0s_0$ and $s_n^\star=s_n^\dagger+s_0$, and let $\cS_n^+=\{S:S\supseteq S_0,\ |S|\le s_n^\star\}$. For $r>0$, write $\cB_S(r)=\{h\in\mathbb R^{|S|}:\|(\mF_S^0)^{1/2}h\|_2\le r\}$. For $S\in\cS_n^+$, Assumption~\ref{ass:sparse-fisher} ensures that this is the radius-$r$ ball centred at zero in the Fisher norm $h\mapsto\|(\mF_S^0)^{1/2}h\|_2$. Throughout the remainder of the paper, $s_0\ge1$, $\log p\to\infty$, and $\log n=O(\log p)$.

Assumptions~\ref{ass:sparse-fisher}--\ref{ass:truth-lan} control the likelihood over sparse supports, and Assumptions~\ref{ass:prior}--\ref{ass:signal} give prior and signal conditions for exact selection. Provided $\pi_p(s_0)>0$, Proposition~\ref{prop:conditional-bvm} obtains the conditional Gaussian approximation from Assumptions~\ref{ass:sparse-fisher}--\ref{ass:truth-lan}, \ref{ass:slab} and \ref{ass:oracle-laplace}. Theorem~\ref{thm:oracle-bvm} combines the two conclusions.

\subsection{Selection geometry}

\begin{assumption}[Sparse Fisher geometry]
\label{ass:sparse-fisher}
The support budget satisfies $s_n^\star=o(p)$, and for some fixed $0<c_F<C_F<\infty$,
\[
c_F\le\rho_{\min}(n^{-1}\mF_S^0)
\le\rho_{\max}(n^{-1}\mF_S^0)\le C_F,
\qquad S\in\cS_n^+.
\]
\end{assumption}

\begin{assumption}[Bernstein score tails]
\label{ass:score}
For some sequence $d_n\ge0$, $d_n\sqrt{s_n^\star\log p}=o(1)$ and, for every $S\in\cS_n^+$, $u\in\mathbb S^{|S|-1}$, and $\lambda$ satisfying $|\lambda|d_n<1$,
\[
\log\mE_{\beta^0}
\exp\{\lambda u^\top(\mF_S^0)^{-1/2}\Delta_S\}
\le \frac{\lambda^2}{2(1-|\lambda|d_n)}.
\]
\end{assumption}

Assumption~\ref{ass:score} is a uniform Bernstein condition in Fisher coordinates. The choice $d_n=0$ gives the sub-Gaussian case. Section~\ref{sec:glm-examples} verifies the condition under explicit design assumptions, and the supplement gives the Fisher-standardized score bounds.

\begin{assumption}[Selection-scale curvature]
\label{ass:truth-lan}
For every $S\in\cS_n^+$, $\ell_{n,S}$ is samplewise concave on $\mathbb R^{|S|}$.
There exist $r_n^0>0$ and $\varepsilon_n\downarrow0$ such that $r_n^0/\sqrt{s_n^\star\log p}\to\infty$ and $\varepsilon_n(s_0+\log p)=o(\log p)$, and, with $\mP_{\beta^0}$-probability tending to one,
\[
\sup_{S\in\cS_n^+}\sup_{h\in\cB_S(r_n^0)}
\left\|(\mF_S^0)^{-1/2}
\{-\nabla_S^2\ell_{n,S}(\beta_S^0+h)-\mF_S^0\}
(\mF_S^0)^{-1/2}\right\|_{\rm op}
\le\varepsilon_n.
\]
\end{assumption}

Assumption~\ref{ass:truth-lan} requires only a relative-information approximation on the larger selection neighbourhood. Together with Assumptions~\ref{ass:sparse-fisher} and \ref{ass:score}, it also gives uniform score localization and radial likelihood decay. The condition $\varepsilon_n(s_0+\log p)=o(\log p)$ makes the accumulated information-sandwich error negligible relative to the coordinatewise support cost.

\subsection{Prior and identifiability}

\begin{assumption}[Model-size prior]
\label{ass:prior}
For some fixed $c_-,c_+>0$ and $a_-\ge a_+>1$, with $K_0$ sufficiently large,
\[
c_-p^{-a_-}\pi_p(s-1)
\le \pi_p(s)
\le c_+p^{-a_+}\pi_p(s-1),
\qquad 1\le s\le p.
\]
\end{assumption}

The recursion is standard for sparse priors \citep{castillo2015bayesian} and includes $\pi_p(s)\propto p^{-as}$ as the special case $a_-=a_+=a$. In the supplementary proof, slab normalization and fractional moments of posterior-odds bounds control the sum over added coordinates for $a_+>1$. This moment argument applies to both ordinary and fractional posteriors. After fixing $a_+$, we choose $K_0$ sufficiently large to control the posterior probability of supports larger than $K_0s_0$.

\begin{assumption}[Slab regularity]
\label{ass:slab}
The slab densities have the product form $g_S(\beta_S)=\prod_{j\in S}g_j(\beta_j)$, where the $g_j$ are positive probability densities on $\mathbb R$. For some fixed $C_g>0$,
\[
\min_{1\le j\le p}g_j(\beta_j^0)\ge p^{-C_g},
\qquad
\sup_{S\in\cS_n^+}\sup_{\|u\|_2\le r_n^0/\sqrt{nc_F}}
\left|\log\frac{g_S(\beta_S^0+u)}{g_S(\beta_S^0)}\right|=O(1).
\]
\end{assumption}

Assumption~\ref{ass:slab} requires sufficient slab density at the true coefficients and bounded density variation on the selection neighbourhoods. Slab normalization controls the density at zero together with the determinant in the marginal likelihood, so no separate bound at zero is imposed. Gaussian, Laplace and Student-$t$ examples are verified in Section~\ref{sec:slab-examples}.

\begin{assumption}[Beta-min condition]
\label{ass:signal}
For the fixed $\alpha\in(0,1]$ under consideration, $nb_0^2\ge C_\alpha\log p$ for a sufficiently large constant $C_\alpha>0$.
\end{assumption}

The constant $C_\alpha$ may depend on the fixed constants in Assumptions~\ref{ass:sparse-fisher}--\ref{ass:slab}, in particular on $a_-$ and hence on $a$ when $\pi_p(s)\propto p^{-as}$. Assumption~\ref{ass:signal} is needed only to exclude underfitted supports. Our signal condition does not impose an additional uniform bound on the absolute row sums of $(n^{-1}\mF_S^0)^{-1}$. In \citet{lee2025advances}, attaining the same beta-min order through the coordinatewise MLE bound requires bounded inverse-information and score factors; see their equations (5.17) and (5.20). Our comparison of omitted-signal losses avoids this coordinatewise estimation step.

\subsection{Oracle approximation}

Assumptions~\ref{ass:sparse-fisher}--\ref{ass:truth-lan} imply that the oracle maximum-likelihood estimator exists uniquely and has positive observed information with probability tending to one. Its truth-Fisher distance is $O_{\mP_{\beta^0}}(\sqrt{s_0})$; see Lemma~\ref{supp-le:oracle-mle}. On the exceptional event, set $\widehat\beta_{S_0}=0$ and $\widehat J_{S_0}=\mI_{s_0}$.

\begin{samepage}
\begin{assumption}[Oracle approximation]
\label{ass:oracle-laplace}
There exist $r_n>0$ and a deterministic sequence $\bar\kappa_n^0\ge0$ such that $r_n/\sqrt{s_0}\to\infty$ and $s_0\bar\kappa_n^0\to0$, and, with $\mP_{\beta^0}$-probability tending to one,
\[
\kappa_n^0=
\sup_{0<\|\widehat J_{S_0}^{1/2}u\|_2\le r_n}
\frac{\left\|\widehat J_{S_0}^{-1/2}
\{-\nabla_{S_0}^2\ell_{n,S_0}(\widehat\beta_{S_0}+u)
-\widehat J_{S_0}\}
\widehat J_{S_0}^{-1/2}\right\|_{\rm op}}
{\|\widehat J_{S_0}^{1/2}u\|_2}
\le\bar\kappa_n^0.
\]
In addition, for every fixed $M>0$,
\[
\sup_{\|u\|_2\le M\sqrt{s_0/n}}
\left|\log\frac{g_{S_0}(\beta_{S_0}^0+u)}
{g_{S_0}(\beta_{S_0}^0)}\right|\longrightarrow0.
\]
\end{assumption}
\end{samepage}

Assumption~\ref{ass:oracle-laplace} controls likelihood curvature and prior flatness only on the true support. The integrated Laplace bound of \citet{katsevich2025improved} gives a leading total-variation error of order $s_0\kappa_n^0$. Its radius requirement is met by reducing the radius within the proof. The slab need be asymptotically flat only on the smaller oracle neighbourhoods, while bounded variation in Assumption~\ref{ass:slab} suffices for support comparison. Fisher standardization is also used in related sparse-GLM analyses \citep{tang2024empirical,lee2025advances}.

\section{Selection consistency}
\label{sec:selection}

For an underfitted support $S\nsupseteq S_0$ with $|S|\le s_n^\dagger$, let $M=S_0\setminus S$ and $T=S\cup S_0$. Within model $T$, model $S$ fixes the omitted coordinates $M$ at zero. The information for detecting these coefficients after accounting for $S$ is $\mK_{M\mid S}=\mF_M^0-\mF_{MS}^0(\mF_S^0)^{-1}\mF_{SM}^0$.
Assumption~\ref{ass:sparse-fisher} and the definition of $b_0$ give
\[
(\beta_M^0)^\top\mK_{M\mid S}\beta_M^0
\ge c_Fn\|\beta_M^0\|_2^2
\ge c_Fn|M|b_0^2.
\]
Assumption~\ref{ass:score} controls the projected score fluctuations that could offset this loss, while the model-size prior and the same bound control additional noise coordinates. Together with posterior-dimension and radial-tail bounds, these give selection consistency.

\begin{theorem}[Selection consistency]\label{thm:selection}
Fix $\alpha\in(0,1]$. Under Assumptions~\ref{ass:sparse-fisher}--\ref{ass:signal},
\[
\Pi_{n,\alpha}(S_\beta=S_0\mid Y)
\xrightarrow{\mP_{\beta^0}}1.
\]
\end{theorem}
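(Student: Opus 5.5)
We bound the posterior odds $\Pi_{n,\alpha}(S\mid Y)/\Pi_{n,\alpha}(S_0\mid Y)=\{\pi_p(|S|)\binom p{s_0}\mathcal Z_{n,\alpha}(S)\}/\{\pi_p(s_0)\binom p{|S|}\mathcal Z_{n,\alpha}(S_0)\}$ and sum over all $S\ne S_0$. We split the competitors into three classes: large supports $|S|>s_n^\dagger$, overfitted supports $S\supsetneq S_0$ with $|S|\le s_n^\dagger$, and underfitted supports $S\nsupseteq S_0$ with $|S|\le s_n^\dagger$. All bounds are derived on a single high-probability event $\mathcal E_n$. On $\mathcal E_n$ the curvature bound of Assumption~\ref{ass:truth-lan} holds. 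The event also carries a uniform score bound: the maximal Fisher-standardized score over $\cS_n^+$ satisfies $\|(\mF_S^0)^{-1/2}\Delta_S\|_2^2\lesssim |S|\log p$, uniformly in $S$. This score bound is obtained from Assumption~\ref{ass:score} by a $1/2$-net on the sphere and a union bound over $\binom{p}{|S|}$ supports, using $d_n\sqrt{s_n^\star\log p}=o(1)$ so that the Bernstein tail is sub-Gaussian at the relevant level.

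\emph{Lower bound for $\mathcal Z_{n,\alpha}(S_0)$.} We Taylor expand $\ell_{n,S_0}$ around $\beta^0_{S_0}$ on $\cB_{S_0}(r)$ with $r\asymp\sqrt{s_0\log p}$. Together with Assumption~\ref{ass:slab}, this gives
\[
\mathcal Z_{n,\alpha}(S_0)\ge \e^{\alpha\ell_n(\beta^0)}g_{S_0}(\beta^0_{S_0})\,\e^{-O(s_0)-\varepsilon_n O(s_0\log p)}\det(\alpha\mF^0_{S_0})^{-1/2}(2\pi)^{s_0/2}.
\]
The key point is that $\varepsilon_n r^2=o(\log p)\cdot$(per coordinate), as guaranteed by $\varepsilon_n(s_0+\log p)=o(\log p)$.

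\emph{Upper bounds on $\mathcal Z_{n,\alpha}(S)$.} For $S\supseteq S_0$, write $\ell_{n,S}(\beta^0_S+h)-\ell_n(\beta^0)$. Inside $\cB_S(r_n^0)$ this is at most $h^\top\Delta_S-\tfrac12(1-\varepsilon_n)h^\top\mF^0_Sh$. Outside the ball, samplewise concavity gives radial decay: along a ray the increment is at most linear with slope $-c\,r_n^0$, since $r_n^0\gg\sqrt{s_n^\star\log p}$ dominates the score. Integrating, we obtain $\mathcal Z_{n,\alpha}(S)\lesssim \e^{\alpha\ell_n(\beta^0)}\sup_{\cB}g_S\,\det(\alpha\mF^0_S)^{-1/2}(2\pi)^{|S|/2}\exp\{\tfrac{\alpha}{2(1-\varepsilon_n)}\|(\mF_S^0)^{-1/2}\Delta_S\|^2\}$. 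The extra coordinates in $S\setminus S_0$ cost nothing in slab density beyond $O(1)$ via normalization. Rather than using $g_j(0)$, the plan is to integrate the $S\setminus S_0$ block over its slab directly: the Gaussian factor from the Schur complement has width $\asymp n^{-1/2}$, and integrating $g_j$ against it gives at most $C\sqrt{n}^{-1}\cdot\sqrt n$ times a factor bounded by the slab mass. Combining this with the prior ratio $\pi_p(s)/\pi_p(s-1)\le c_+p^{-a_+}$ and the combinatorial factor $\binom{p}{|S|}^{-1}\binom{p}{s_0}$, each added coordinate contributes $p^{-a_+}\cdot p\cdot \exp\{\alpha\,\chi^2_{\text{proj}}/2\}$, where the projected chi-square is the score component orthogonal to $S_0$. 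Taking expectations of a fractional power $\gamma<1$ of the odds, as the text after Assumption~\ref{ass:prior} indicates, the sub-Gaussian moment $\mE\e^{\gamma\alpha\chi^2/2}$ stays bounded for $\gamma\alpha<1$. With $a_+>1$, the sum over all overfitted $S$ is then $o_P(1)$ by Markov's inequality. For $|S|>s_n^\dagger$, we reduce to $S\cup S_0\in\cS_n^+$ and use the same per-coordinate geometric decay, choosing $K_0$ large enough that the tail sum beats the growth of the score exponent.

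\emph{Underfitted supports, the main obstacle.} Fix $S\nsupseteq S_0$ with $M=S_0\setminus S$ and $T=S\cup S_0$, and work in $\cB_T$. Maximizing the quadratic approximation over $\beta_T$ subject to $\beta_M=0$ loses $\tfrac12(\beta_M^0)^\top\mK_{M\mid S}\beta^0_M\ge\tfrac12c_Fn|M|b_0^2$, up to a cross term $(\beta^0_M)^\top$ (projected score). That cross term is at most $\sqrt{n|M|b_0^2\cdot |T|\log p}$ in magnitude. The difficulty is that $\beta_S^0+h$ with $h$ shifting by $\beta_M^0$ may leave $\cB_T(r_n^0)$ when signals are large. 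To handle this, I would use concavity: compare along the segment to the boundary, so that the loss is at least linear in the Fisher distance beyond $r_n^0$. I would then bound the odds by $\exp\{-c\alpha n|M|b_0^2+C|T|\log p\}$ times the prior/combinatorial factor $p^{(a_-+1)|M|}$. Assumption~\ref{ass:signal} with large $C_\alpha$ gives a net factor $p^{-c|M|}$, but only after the $|S\setminus S_0|$ added coordinates are absorbed as in the overfitted case; this is where the condition $a_-\ge a_+$ and the choice of $C_\alpha$ depending on $a_-$ enter. Summing over $|M|\ge1$ and over $S\setminus S_0$ then gives $o_P(1)$. I expect the hardest part to be getting the omitted-signal loss uniformly without a coordinatewise MLE bound while keeping the cross-term and $\varepsilon_n$-sandwich errors below $|M|\log p$.
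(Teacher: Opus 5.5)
The three-way split, concavity for the far region and fractional moments for added coordinates are the right ingredients. However, the underfitted step fails as planned, and that step is where the threshold is decided.

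\emph{Underfitted supports.} Your target bound $\exp\{-c\alpha n|M|b_0^2+C|T|\log p\}\,p^{(a_-+1)|M|}$, and likewise the cross-term bound $\sqrt{n|M|b_0^2\,|T|\log p}$, come from a worst-case score event. Since $|T|\ge s_0$, for one omitted coordinate and growing $s_0$ the term $Cs_0\log p$ swamps $C_\alpha\log p$. This route therefore gives only $nb_0^2\gtrsim s_0\log p$. Absorbing $S\setminus S_0$ does not help, because the $S_0$ block itself must cancel. Two ingredients are missing.
\begin{enumerate}[label=(\roman*),leftmargin=*]
\item \emph{Matched two-sided evidence bounds.} Your lower bound for $\mathcal Z_{n,\alpha}(S_0)$ drops $\exp\{\alpha\|Z_{S_0}\|_2^2/(2\overline\kappa_n)\}$ and loses $\e^{O(s_0)+o(s_0\log p)}$. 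The upper bounds, in contrast, keep $\exp\{\alpha\|Z_T\|_2^2/(2\underline\kappa_n)\}$, where $\underline\kappa_n=1-\varepsilon_n$ and $\overline\kappa_n=1+\varepsilon_n$. The paper brackets every evidence by the Gaussian integrals $\mathcal J_{n,\alpha}(\cdot,1\pm\varepsilon_n)$. It then uses $\|Z_T\|_2^2=\|Z_{S_0}\|_2^2+\|Z_{B\mid S_0}\|_2^2$ and $|\mF_{S_0}^0|^{1/2}/|\mF_S^0|^{1/2}=|\mK_{M\mid S}|^{1/2}|\mK_{B\mid S_0}|^{-1/2}$. The condition $\varepsilon_n(s_0+\log p)=o(\log p)$ is used to make the whole mismatch $\tfrac{s_0}{2}\log(\overline\kappa_n/\underline\kappa_n)+\alpha\varepsilon_n\|Z_{S_0}\|_2^2/(\underline\kappa_n\overline\kappa_n)$ equal to $o(\log p)$ in total, not per coordinate. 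The same cancellation is needed for overfitted supports, where each added coordinate gains only a fixed power of $p$.
\item \emph{Control of the cross term in expectation.} After the exact profile identity, the loss is $\|Z_{M\mid S}+\underline\kappa_na_S\|_2^2/(2\underline\kappa_n)$ with $\|a_S\|_2=d_S$. The projection $(a_S/d_S)^\top Z_{M\mid S}$ is a unit projection of $Z_T$. Assumption~\ref{ass:score} and $d_S^2\ge c_FC_\alpha m\log p$ then give $\mE_{\beta^0}\exp\{-t\|Z_{M\mid S}+\underline\kappa_na_S\|_2^2\}\le2p^{-c_tC_\alpha m}$. The paper raises each odds term to the power $3/4$ and uses H\"older's inequality to separate this factor from $\exp\{\alpha\|Z_{B\mid S_0}\|_2^2/(2\underline\kappa_n)\}$. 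It then sums over supports, paying at most $p^m$ for the choice of $M$. This is what yields the $\log p$ order.
\end{enumerate}

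\emph{Large supports.} The treatment of $|S|>s_n^\dagger$ also fails. For such $S$, $S\cup S_0$ generally lies outside $\cS_n^+$, where Assumptions~\ref{ass:sparse-fisher}--\ref{ass:truth-lan} give no control. In addition, a worst-case score exponent grows per coordinate, so enlarging $K_0$ does not beat it. You need a likelihood-free argument instead. Use $\mE_{\beta^0}\exp\{\alpha[\ell_n(\beta)-\ell_n(\beta^0)]\}\le1$ for every $\beta$, together with Fubini's theorem and a denominator lower bound $\e^{-C_Ds_0\log p}$ whose constant does not depend on $K_0$. The prior tail $\sum_{k>s_n^\dagger}\pi_p(k)\le C(c_+p^{-a_+})^{s_n^\dagger+1}$ is then negligible once $a_+K_0>C_D$. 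Your crude lower bound for $\mathcal Z_{n,\alpha}(S_0)$, combined with the prior mass of a small Fisher ball, is exactly what this step needs; it is not adequate for the odds comparisons.

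Your concavity treatment of large signals agrees with the paper. There, the far part of each affine slice is bounded by $\e^{\alpha\ell_n(\beta^0)-\alpha c(r_n^0)^2}$ and compared with the same denominator. For overfitted supports, bounding the added-block slab integral by its total mass can replace the paper's slab-normalization estimate. In that case the fractional power must exceed $1/a_+$, because the power does not reduce the count of added sets.
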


A sufficiently large $C_\alpha$ in $nb_0^2\ge C_\alpha\log p$ makes the missing-signal loss dominate the per-coordinate prior, determinant, slab and support-counting costs.

\begin{remark}[Outcome-independent marginal likelihoods]
\label{rem:marginal-likelihoods}
For $T\in\cS_n^+$, put $Z_T=(\mF_T^0)^{-1/2}\Delta_T$ and, for $c>0$, define
\[
\mathcal J_{n,\alpha}(T,c)
=\left(\frac{2\pi}{\alpha c}\right)^{|T|/2}
\exp\left\{\frac{\alpha}{2c}\|Z_T\|_2^2\right\}.
\]
Under Assumptions~\ref{ass:sparse-fisher}--\ref{ass:truth-lan} and \ref{ass:slab}, for fixed $\alpha\in(0,1]$, there is a fixed $C<\infty$ such that, with $\mP_{\beta^0}$-probability tending to one, uniformly over $T\in\cS_n^+$,
\begin{equation}
\begin{aligned}
C^{-1}\mathcal J_{n,\alpha}(T,1+\varepsilon_n)
&\le \frac{\mathcal Z_{n,\alpha}(T)}
{\e^{\alpha\ell_n(\beta^0)}g_T(\beta_T^0)|\mF_T^0|^{-1/2}}\\
&\le C\mathcal J_{n,\alpha}(T,1-\varepsilon_n).
\end{aligned}
\label{eq:marginal-likelihood-bounds}
\end{equation}
These bounds make the slab-density and determinant terms explicit. In \citet{tang2024empirical} and \citet{lee2025advances}, fitted-mode centring and local-information scaling incorporate the corresponding terms into the posterior support weights. Here, when a support adds noise coordinates to $S_0$, the common slab factors cancel, and slab normalization controls the remaining density at zero together with the determinant. For underfitted supports, an affine-slice comparison exposes the loss from omitting true coefficients. Section~\ref{supp-sec:selection-proof} proves \eqref{eq:marginal-likelihood-bounds} and the resulting support comparisons.
\end{remark}

\section{Bernstein--von Mises theorem}
\label{sec:bvm}

We first establish a Gaussian approximation conditionally on the true support, without beta-min separation or selection consistency, and then combine it with support recovery.
Write $\|\cdot\|_{\rm TV}$ for total variation distance.

\begin{proposition}[Conditional Gaussian approximation]\label{prop:conditional-bvm}
Fix $\alpha\in(0,1]$. Suppose $\pi_p(s_0)>0$ and Assumptions~\ref{ass:sparse-fisher}--\ref{ass:truth-lan}, \ref{ass:slab} and \ref{ass:oracle-laplace} hold. Then
\[
\left\|
\Pi_{n,\alpha}(\beta_{S_0}\in\cdot\mid S_\beta=S_0,Y)
-\mathcal N_{s_0}\big(\widehat\beta_{S_0},(\alpha\widehat J_{S_0})^{-1}\big)
\right\|_{\rm TV}
\xrightarrow{\mP_{\beta^0}}0.
\]
\end{proposition}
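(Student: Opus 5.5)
Conditionally on $S_\beta=S_0$, the posterior of $\beta_{S_0}$ has Lebesgue density proportional to $\exp\{\alpha\ell_{n,S_0}(\beta_{S_0})\}g_{S_0}(\beta_{S_0})$. The plan is to compare this density with the Gaussian $\mathcal N_{s_0}(\widehat\beta_{S_0},(\alpha\widehat J_{S_0})^{-1})$ through the reparametrization $v=\widehat J_{S_0}^{1/2}(\beta_{S_0}-\widehat\beta_{S_0})$. We work on the event $E_n$, of probability tending to one, on which four things hold: the oracle MLE exists uniquely with $\widehat J_{S_0}\succ0$ (Lemma~\ref{supp-le:oracle-mle}), the bound of Assumption~\ref{ass:truth-lan} holds, $\kappa_n^0\le\bar\kappa_n^0$, and $\|(\mF^0_{S_0})^{1/2}(\widehat\beta_{S_0}-\beta^0_{S_0})\|_2\le M_n\sqrt{s_0}$ for a slowly growing $M_n$. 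On $E_n$, Assumption~\ref{ass:truth-lan} at $h=\widehat\beta_{S_0}-\beta^0_{S_0}$ gives $(1-\varepsilon_n)\mF^0_{S_0}\preceq\widehat J_{S_0}\preceq(1+\varepsilon_n)\mF^0_{S_0}$, so the $\widehat J$ and Fisher norms are equivalent up to $1+o(1)$. Every total-variation bound below is proved on $E_n$, which suffices for convergence in probability.

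\textbf{Step 1 (local region).} Choose a radius $\bar r_n\le r_n$ with $\bar r_n/\sqrt{s_0}\to\infty$, $s_0\bar\kappa_n^0\bar r_n/\sqrt{s_0}\to0$ where needed, and $\bar r_n\le r_n^0/2$. This is the ``reduction of radius'' mentioned after Assumption~\ref{ass:oracle-laplace}. It is possible because $r_n,r_n^0$ are much larger than $\sqrt{s_0}$ and $s_0\bar\kappa_n^0\to0$. On $\{\|v\|_2\le\bar r_n\}$, Taylor's theorem with the integral remainder and the definition of $\kappa_n^0$ give
\[
\alpha\ell_{n,S_0}(\widehat\beta_{S_0}+\widehat J^{-1/2}_{S_0}v)-\alpha\ell_{n,S_0}(\widehat\beta_{S_0})=-\tfrac{\alpha}{2}\|v\|_2^2+R(v),\qquad |R(v)|\le \tfrac{\alpha}{6}\bar\kappa_n^0\|v\|_2^3 .
\]
We then invoke the integrated Laplace bound of \citet{katsevich2025improved}, or redo its argument. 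Under the Gaussian $\mathcal N(0,\alpha^{-1}\mI)$, $\mE|R(v)|$ is of order $\bar\kappa_n^0 s_0^{3/2}$. Katsevich's refinement exploits the odd symmetry of the cubic term, which has zero Gaussian mean, to obtain a leading error of order $s_0\kappa_n^0$. This $o(1)$ rate is what lets us avoid the cruder $s_0^{3/2}$ requirement. The prior ratio $g_{S_0}(\beta)/g_{S_0}(\beta^0_{S_0})$ must also be controlled on the region. The region lies in a Euclidean ball of radius $O(\sqrt{s_0/n})\cdot(\bar r_n/\sqrt{s_0}+M_n)$ about $\beta^0_{S_0}$, using $c_F$ and the norm equivalence. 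Taking $\bar r_n/\sqrt{s_0}$ and $M_n$ to grow slowly enough, the second part of Assumption~\ref{ass:oracle-laplace}, via a diagonal argument over $M$, makes this log-ratio uniformly $o(1)$. The slab factor then cancels in the normalized density up to a factor $\e^{o(1)}$.

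\textbf{Step 2 (tails).} Show that the posterior mass and the Gaussian mass outside $\{\|v\|_2\le\bar r_n\}$ both vanish. The Gaussian mass vanishes because $\|v\|_2^2$ concentrates at $s_0/\alpha\ll\bar r_n^2$. For the posterior, use samplewise concavity from Assumption~\ref{ass:truth-lan}. Along each ray from $\widehat\beta_{S_0}$, concavity gives $\ell(\widehat\beta+t w)-\ell(\widehat\beta)\le (t/\bar r_n)\{\ell(\widehat\beta+\bar r_n w)-\ell(\widehat\beta)\}\le -c\,t\bar r_n$ for $t\ge\bar r_n$, with $\|w\|_2=1$ in $v$-coordinates; the second inequality uses the Step~1 expansion at the boundary. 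This linear decay is integrated against the prior. The slab density is not assumed bounded, so we bound it using the first part of Assumption~\ref{ass:slab}. That assumption gives bounded variation inside the larger radius $r_n^0$, so the slab ratio there is $O(1)$, and the decay $\e^{-c\bar r_n(t-\bar r_n)}$ kills the polynomial volume factor. Beyond radius $r_n^0$, use $\int g_{S_0}=1$ together with $\sup_{t\ge r_n^0}\e^{-c\alpha t\bar r_n}$. The resulting bound is at most $\exp(-c\,r_n^0\bar r_n)$, compared with the local normalizer, which is at least $|\widehat J|^{-1/2}(2\pi/\alpha)^{s_0/2}g_{S_0}(\beta^0_{S_0})\e^{-O(1)}$. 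The ratio vanishes since $g_{S_0}(\beta^0_{S_0})\ge p^{-C_g s_0}$, $\log|\widehat J|=O(s_0\log n)$, and $r_n^0\bar r_n\gg \sqrt{s_0\log p}\sqrt{s_0}$.

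\textbf{Step 3.} The two truncated densities agree up to a factor $1+o(1)$ in $L^1$, and the truncation errors vanish by Step~2, so the triangle inequality finishes the proof. I expect the main obstacle to be Step~1: getting an $o(1)$ error under only $s_0\bar\kappa_n^0\to0$ rather than $s_0^{3/2}\bar\kappa_n^0\to0$. This needs Katsevich's symmetric-cancellation Laplace bound, with the radius adjusted to meet its hypotheses. A secondary difficulty is checking the exponent bookkeeping in Step~2, where the outer tail must be handled using slab normalization alone.
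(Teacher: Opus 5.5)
\textbf{Verdict: there is a genuine gap in the outer tail of your Step~2.} Your overall architecture matches the paper's: localize, compare the slab-weighted law with the likelihood-only law through slab flatness, invoke the Katsevich bound, and control tails by concavity. Your diagonal choice of a slowly growing radius is a legitimate replacement for the paper's fixed-$M$ balls with the double limit ($n\to\infty$, then $M\to\infty$).

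\textbf{Where the argument fails.} Your only source of likelihood decay is concavity along rays from $\widehat\beta_{S_0}$ together with the oracle expansion at radius $\bar r_n$. This gives linear decay with slope of order $\bar r_n$, so beyond the selection radius you obtain the factor $\exp(-c\,r_n^0\bar r_n)$. In that region the slab is controlled only through $\int g_{S_0}=1$. The factor therefore has to beat $g_{S_0}(\beta^0_{S_0})^{-1}|\widehat J_{S_0}|^{1/2}$, which can be as large as $\exp\{Cs_0\log p\}$, so you need $r_n^0\bar r_n\gg s_0\log p$. The hypotheses give only $r_n^0\gg\sqrt{s_n^\star\log p}$ and $\bar r_n\gg\sqrt{s_0}$, that is, $r_n^0\bar r_n\gg s_0\sqrt{\log p}$, which is not enough. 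Nor can you enlarge $\bar r_n$: it is capped by $r_n$, and in your construction also by the arbitrarily slowly growing diagonal flatness sequence. In the paper's own verification ($r_n^0=M_n\sqrt{s_n^\star\log p}$, $r_n=M_n^0\sqrt{s_0}$ with slowly diverging $M_n,M_n^0$) one has $r_n^0\bar r_n\ll s_0\log p$, so your displayed ratio need not vanish. The oracle-scale curvature cannot supply the outer tail by itself.

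\textbf{The missing ingredient and how to fix it.} You need the selection-scale curvature in Assumption~\ref{ass:truth-lan}, which you currently use only for concavity and for the equivalence of $\widehat J_{S_0}$ and $\mF^0_{S_0}$.
\begin{enumerate}
\item Combine it with the score bound $\|Z_{S_0}\|_2\le r_n^0/4$. This gives the quadratic sandwich on the whole ball $\beta^0_{S_0}+\cB_{S_0}(r_n^0)$, hence $\ell_{n,S_0}\le\ell_n(\beta^0)-(r_n^0)^2/8$ on its boundary.
\item By concavity, $\ell_{n,S_0}\le\ell_n(\beta^0)-c_{\rm rad}(r_n^0)^2$ everywhere outside that ball (Lemma~\ref{supp-le:rec-lan}).
\item Integrate this against the probability density $g_{S_0}$. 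Compare with the normalizer lower bound $\exp\{\alpha\ell_n(\beta^0)-O_{\mP_{\beta^0}}(s_0)\}g_{S_0}(\beta^0_{S_0})|\mF^0_{S_0}|^{-1/2}$, obtained from the lower sandwich on $\beta^0_{S_0}+\cB_{S_0}(\sqrt{s_0})$.
\item The outer posterior mass is then $\exp\{O(s_0\log p)-\alpha c_{\rm rad}(r_n^0)^2\}\to0$, because $(r_n^0)^2\gg s_n^\star\log p$. This is exactly how the paper bounds $P_{n,\alpha}(D_n^c)$.
\end{enumerate}
Inside that ball, your ray argument with bounded slab variation does work. There the paper instead transfers tightness from $Q_{n,\alpha}$ to $P_{n,\alpha}$ through the density ratio $e^{\pm2b_{g,n}}$ on $D_n$.
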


Here $\Pi_{n,\alpha}(\beta_{S_0}\in\cdot\mid S_\beta=S_0,Y)$ denotes the conditional posterior of the active coordinates on $\mathbb R^{s_0}$. The Gaussian law in the full result below is embedded in $\mathbb R^p$ by placing a point mass at zero on $S_0^c$.

\begin{theorem}[Bernstein--von Mises]\label{thm:oracle-bvm}
Fix $\alpha\in(0,1]$. Under Assumptions~\ref{ass:sparse-fisher}--\ref{ass:oracle-laplace},
\[
\left\|
\Pi_{n,\alpha}(\,\cdot\mid Y)
-
\mathcal N_{s_0}
\big(\widehat\beta_{S_0},
(\alpha\widehat J_{S_0})^{-1}\big)
\otimes\delta_{S_0^c}
\right\|_{\rm TV}
\xrightarrow{\mP_{\beta^0}}0.
\]
\end{theorem}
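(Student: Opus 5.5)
The plan is to combine Theorem~\ref{thm:selection} with Proposition~\ref{prop:conditional-bvm} by decomposing the full posterior as a mixture over supports. First note that $\pi_p(s_0)>0$ is implied by Assumption~\ref{ass:prior}. Indeed, the recursion $\pi_p(s)\ge c_-p^{-a_-}\pi_p(s-1)$, iterated downward from $s=s_0$, gives $\pi_p(s_0)\ge (c_-p^{-a_-})^{s_0}\pi_p(0)$. Conversely, iterating the upper bound shows that $\pi_p(0)=0$ would force $\pi_p\equiv0$. Hence $\pi_p(0)>0$, and so $\pi_p(s_0)>0$. Since Assumptions~\ref{ass:sparse-fisher}--\ref{ass:oracle-laplace} contain all hypotheses of both earlier results, both conclusions are available.

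Write $Q_n=\mathcal N_{s_0}(\widehat\beta_{S_0},(\alpha\widehat J_{S_0})^{-1})\otimes\delta_{S_0^c}$, and let $w_n=\Pi_{n,\alpha}(S_\beta=S_0\mid Y)$. On the event $w_n>0$, decompose
\[
\Pi_{n,\alpha}(\cdot\mid Y)=w_n\,\Pi_{n,\alpha}(\cdot\mid S_\beta=S_0,Y)+(1-w_n)R_n,
\]
where $R_n$ is the posterior conditioned on $S_\beta\neq S_0$, or any probability measure if $w_n=1$. Under $\Pi_{n,\alpha}(\cdot\mid S_\beta=S_0,Y)$ we have $\beta_{S_0^c}=0$ almost surely, because the prior places $\delta_{S_0^c}$ there. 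So this conditional law equals the image of $\Pi_{n,\alpha}(\beta_{S_0}\in\cdot\mid S_\beta=S_0,Y)$ under the zero-padding embedding $\beta_{S_0}\mapsto\widetilde\beta_{S_0}$, and $Q_n$ is the image of the Gaussian under the same injective measurable map. Total variation is preserved under an injective embedding, and more generally it does not increase under measurable maps. Therefore
\[
\bigl\|\Pi_{n,\alpha}(\cdot\mid S_\beta=S_0,Y)-Q_n\bigr\|_{\rm TV}=\bigl\|\Pi_{n,\alpha}(\beta_{S_0}\in\cdot\mid S_\beta=S_0,Y)-\mathcal N_{s_0}(\widehat\beta_{S_0},(\alpha\widehat J_{S_0})^{-1})\bigr\|_{\rm TV}.
\]

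The triangle inequality and convexity of total variation then give
\[
\|\Pi_{n,\alpha}(\cdot\mid Y)-Q_n\|_{\rm TV}\le w_n\|\Pi_{n,\alpha}(\cdot\mid S_\beta=S_0,Y)-Q_n\|_{\rm TV}+(1-w_n)\|R_n-Q_n\|_{\rm TV}\le D_n+(1-w_n),
\]
where $D_n$ is the conditional distance on $\mathbb R^{s_0}$. By Proposition~\ref{prop:conditional-bvm}, $D_n\to0$ in $\mP_{\beta^0}$-probability. By Theorem~\ref{thm:selection}, $1-w_n\to0$ in $\mP_{\beta^0}$-probability, and in particular $w_n>0$ with probability tending to one. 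The sum therefore tends to zero in probability. On the exceptional event where $w_n=0$ or the oracle MLE fails to exist, the convention $\widehat\beta_{S_0}=0$, $\widehat J_{S_0}=\mI_{s_0}$ keeps $Q_n$ well defined, and that event has vanishing probability.

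There is no substantive obstacle here, since the difficult work lies in the two earlier results. The only points needing care are measure-theoretic. One must make the identification of the conditional posterior on $\mathbb R^p$ with its active-coordinate marginal precise, and handle the conditional posterior when $w_n$ could be zero. Both are settled by the exceptional-event argument above.
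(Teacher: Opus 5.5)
Your proposal is correct and is the paper's argument: the paper bounds the full-posterior total-variation error by $1-\Pi_{n,\alpha}(S_\beta=S_0\mid Y)$ plus the conditional error in Proposition~\ref{prop:conditional-bvm}, and invokes Theorem~\ref{thm:selection} for the first term. Your check that Assumption~\ref{ass:prior} forces $\pi_p(s_0)>0$, together with the zero-padding identification of the conditional law on $\mathbb R^p$, spells out details the paper leaves implicit.
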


Theorem~\ref{thm:selection} and Proposition~\ref{prop:conditional-bvm} imply Theorem~\ref{thm:oracle-bvm} because its total-variation error is bounded by
\[
\Pi_{n,\alpha}(S_\beta\ne S_0\mid Y)
+\left\|\Pi_{n,\alpha}(\beta_{S_0}\in\cdot\mid S_\beta=S_0,Y)-\mathcal N_{s_0}\big(\widehat\beta_{S_0},(\alpha\widehat J_{S_0})^{-1}\big)\right\|_{\rm TV}.
\]
Incorrect supports enter only through their aggregate posterior mass, as in \citet{tang2024empirical}, rather than through a Gaussian-mixture approximation over candidate supports \citep{castillo2015bayesian}.
Expanding the likelihood at the oracle maximum-likelihood estimator gives the observed-information covariance. Score localization and the relative-information bound also give
\[
\|(\mF_{S_0}^0)^{1/2}(\widehat\beta_{S_0}-\beta_{S_0}^0)\|_2=O_{\mP_{\beta^0}}(\sqrt{s_0}),
\qquad
\|\widehat\beta_{S_0}-\beta_{S_0}^0\|_2=O_{\mP_{\beta^0}}(\sqrt{s_0/n}).
\]

\begin{remark}[Classical Fisher-information form]
\label{rem:fisher-information-form}
If, in addition, $\sqrt{s_0}\,\varepsilon_n=o(1)$, then
\[
\left\|
\Pi_{n,\alpha}(\,\cdot\mid Y)
-\mathcal N_{s_0}
\big(\widehat\beta_{S_0},(\alpha\mF_{S_0}^0)^{-1}\big)
\otimes\delta_{S_0^c}
\right\|_{\rm TV}
\xrightarrow{\mP_{\beta^0}}0.
\]
\end{remark}

For fixed $s_0$, the additional condition is automatic because $\varepsilon_n\to0$. For growing $s_0$, it is also verified by the exactly quadratic Gaussian model, by logistic, Poisson and bounded-window probit regression under average spread, and by the separate unbounded-weight Poisson result below. It remains an additional condition for Gamma and negative-binomial regression and for the generic sub-Gaussian random-design bridge.

\subsection{Contraction}

The oracle approximation and maximum-likelihood rate give the following contraction result.

\begin{corollary}[Posterior contraction]
\label{cor:posterior-contraction}
Under conditions of Theorem~\ref{thm:oracle-bvm}, for every deterministic sequence $L_n\to\infty$,
\[
\Pi_{n,\alpha}(\|\beta-\beta^0\|_2>L_n\sqrt{s_0/n}\mid Y)\xrightarrow{\mP_{\beta^0}}0.
\]
\end{corollary}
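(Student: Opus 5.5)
The contraction statement follows by transferring a Gaussian tail bound through the total-variation approximation of Theorem~\ref{thm:oracle-bvm}, together with the oracle MLE rate stated after that theorem. Write $Q_n=\mathcal N_{s_0}(\widehat\beta_{S_0},(\alpha\widehat J_{S_0})^{-1})\otimes\delta_{S_0^c}$ and $B_n=\{\beta:\|\beta-\beta^0\|_2>L_n\sqrt{s_0/n}\}$. Then
\[
\Pi_{n,\alpha}(B_n\mid Y)\le \|\Pi_{n,\alpha}(\cdot\mid Y)-Q_n\|_{\rm TV}+Q_n(B_n).
\]
The first term tends to zero in $\mP_{\beta^0}$-probability by Theorem~\ref{thm:oracle-bvm}. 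It remains to show $Q_n(B_n)\to0$ in probability.

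Under $Q_n$, we have $\beta_{S_0^c}=0=\beta^0_{S_0^c}$. Hence $\|\beta-\beta^0\|_2\le\|\beta_{S_0}-\widehat\beta_{S_0}\|_2+\|\widehat\beta_{S_0}-\beta^0_{S_0}\|_2$. For the second term, the paper gives $\|\widehat\beta_{S_0}-\beta_{S_0}^0\|_2=O_{\mP_{\beta^0}}(\sqrt{s_0/n})$, so with probability tending to one it is at most $(L_n/2)\sqrt{s_0/n}$, because $L_n\to\infty$. It then suffices to show that $Q_n(\|\beta_{S_0}-\widehat\beta_{S_0}\|_2>(L_n/2)\sqrt{s_0/n})\to0$ in probability.

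For this Gaussian deviation, write $\beta_{S_0}-\widehat\beta_{S_0}=(\alpha\widehat J_{S_0})^{-1/2}Z$ with $Z\sim\mathcal N_{s_0}(0,\mI)$. Then
\[
\|\beta_{S_0}-\widehat\beta_{S_0}\|_2\le \{\alpha\rho_{\min}(\widehat J_{S_0})\}^{-1/2}\|Z\|_2 .
\]
The key step is a lower bound $\rho_{\min}(\widehat J_{S_0})\ge c\,n$ with probability tending to one. To obtain it, note that the oracle MLE satisfies $\|(\mF^0_{S_0})^{1/2}(\widehat\beta_{S_0}-\beta^0_{S_0})\|_2=O_{\mP}(\sqrt{s_0})$. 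Since $r_n^0/\sqrt{s_n^\star\log p}\to\infty$, the point $\widehat\beta_{S_0}$ lies in $\beta^0_{S_0}+\cB_{S_0}(r_n^0)$ with probability tending to one. Assumption~\ref{ass:truth-lan} with $S=S_0$ then gives $(1-\varepsilon_n)\mF^0_{S_0}\preceq\widehat J_{S_0}$. Assumption~\ref{ass:sparse-fisher} then yields $\rho_{\min}(\widehat J_{S_0})\ge (1-\varepsilon_n)c_Fn\ge c_Fn/2$ for large $n$.

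On that event,
\[
Q_n\bigl(\|\beta_{S_0}-\widehat\beta_{S_0}\|_2>(L_n/2)\sqrt{s_0/n}\bigr)\le \mP\bigl(\|Z\|_2>(L_n/2)\sqrt{\alpha c_F s_0/2}\bigr).
\]
Since $\mE\|Z\|_2^2=s_0$, Chebyshev's (Markov's) inequality bounds this by $8/(\alpha c_F L_n^2)\to0$, uniformly in $s_0$. This gives the result.

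The main point needing care is the eigenvalue lower bound for $\widehat J_{S_0}$. It requires placing the MLE inside the selection neighbourhood so that Assumption~\ref{ass:truth-lan} applies; everything else is a routine triangle inequality. On the exceptional event where $\widehat J_{S_0}=\mI_{s_0}$ by convention, the probability vanishes and the event can be ignored.
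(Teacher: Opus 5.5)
Your proof is correct and follows essentially the paper's route. You transfer through the total-variation bound of Theorem~\ref{thm:oracle-bvm}, then control the Gaussian tail by Markov's inequality, using the oracle MLE rate and the Fisher equivalence of $\widehat J_{S_0}$, and finally convert to the Euclidean norm via $c_F$. The paper takes the Fisher equivalence from $\|B_n-\mI_{s_0}\|_{\rm op}\le\varepsilon_n$ in Lemma~\ref{supp-le:oracle-mle}, whereas you rederive it from Assumption~\ref{ass:truth-lan}. The one cosmetic difference is that the paper bounds the Fisher-norm second moment of $\beta_{S_0}-\beta_{S_0}^0$ under the Gaussian in one step, while you split off the MLE deviation by the triangle inequality.
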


\begin{remark}[Oracle versus global contraction]
Exact support recovery gives the known-support contraction rate $\sqrt{s_0/n}$. The usual global rate $\{s_0\log p/n\}^{1/2}$ also includes the cost of locating the support \citep{castillo2015bayesian,jeong2021posterior,tang2024empirical,lee2025advances}. The corollary therefore relies on signal separation and does not give the oracle rate for arbitrarily weak signals.
\end{remark}

\subsection{Inferential consequences}
\label{sec:inferential-consequences}

Total-variation convergence gives asymptotically correct posterior content, whereas frequentist coverage requires sampling conditions on the oracle maximum-likelihood estimator. Throughout this subsection, dependence on $n$ is suppressed in the credible-region notation and related quantities. Let $q_{d,1-\gamma}$ denote the $(1-\gamma)$ quantile of $\chi_d^2$ and let $z_{1-\gamma/2}$ denote the corresponding standard-normal quantile. For any support $S$ and $j\in S$, let $e_j\in\mathbb R^S$ be the unit vector selecting coordinate $j$. Its ambient support is clear from context. Define
\begin{equation}\label{eq:oracle-credible-ellipsoid}
\mathcal C_{\alpha}(1-\gamma)=\Big\{\beta:S_\beta=S_0,\ \alpha(\beta_{S_0}-\widehat\beta_{S_0})^\top\widehat J_{S_0}(\beta_{S_0}-\widehat\beta_{S_0})\le q_{s_0,1-\gamma}\Big\}.
\end{equation}
Fix an index $j$ that belongs to $S_0$ eventually and is specified before the data are observed. Write $\widehat\beta_j=e_j^\top\widehat\beta_{S_0}$ and define the oracle marginal interval
\begin{equation}\label{eq:oracle-marginal-interval}
\mathcal I_{j,\alpha}(1-\gamma)=\widehat\beta_j\mathbin{\pm}z_{1-\gamma/2}\{\alpha^{-1}e_j^\top\widehat J_{S_0}^{-1}e_j\}^{1/2}.
\end{equation}
For the sampling calibration, write the normalized oracle score contributions as
\[
\zeta_i
=(\mF_{S_0}^0)^{-1/2}\nabla_{S_0}\log f_{i,\beta^0}(Y_i)
=(\mF_{S_0}^0)^{-1/2}X_{i,S_0}
\frac{\xi'(\eta_i^0)}{\tau_i}(Y_i-\mu_i^0),
\qquad \mu_i^0=b'\{\xi(\eta_i^0)\}.
\]
Correct specification gives independent mean-zero vectors satisfying $\sum_i\operatorname{Var}_{\beta^0}(\zeta_i)=\mI_{s_0}$. For such $j$, let $v_j=(\mF_{S_0}^0)^{-1/2}e_j/\{e_j^\top(\mF_{S_0}^0)^{-1}e_j\}^{1/2}$.

\begin{corollary}[Credible sets and coverage]
\label{cor:coverage}
Under conditions of Theorem~\ref{thm:oracle-bvm}, fix $0<\gamma<1$, $\alpha\in(0,1]$ and an active coordinate $j$. Then
\begin{align*}
\Pi_{n,\alpha}\{\mathcal C_{\alpha}(1-\gamma)\mid Y\}
&=1-\gamma+o_{\mP_{\beta^0}}(1),\\
\Pi_{n,\alpha}\{\beta_j\in\mathcal I_{j,\alpha}(1-\gamma)\mid Y\}
&=1-\gamma+o_{\mP_{\beta^0}}(1).
\end{align*}
If $s_0=d\ge1$ is fixed and the Lyapunov condition $\sup_{\|v\|_2=1}\sum_{i=1}^n\mE_{\beta^0}|v^\top\zeta_i|^{2+\delta}=o(1)$ holds for some $\delta>0$, then
\[
\mP_{\beta^0}\{\beta^0\in\mathcal C_{\alpha}(1-\gamma)\}
\longrightarrow
F_{\chi_d^2}\{q_{d,1-\gamma}/\alpha\}.
\]
For every fixed active coordinate $j$, the marginal limit below holds under the same fixed-dimensional condition. It also holds when $s_0$ grows, provided, for some $\delta>0$, $\sum_{i=1}^n\mE_{\beta^0}|v_j^\top\zeta_i|^{2+\delta}=o(1)$ and $\varepsilon_n\sqrt{s_0}=o(1)$. In either case,
\begin{equation}\label{eq:fractional-marginal-coverage}
\mP_{\beta^0}\{\beta_j^0\in
\mathcal I_{j,\alpha}(1-\gamma)\}
\longrightarrow
2\Phi\{z_{1-\gamma/2}/\sqrt\alpha\}-1.
\end{equation}
\end{corollary}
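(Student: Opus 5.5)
The plan has two parts: posterior content, which is a transfer from Theorem~\ref{thm:oracle-bvm}, and frequentist coverage, which needs a CLT for the oracle MLE.

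\textbf{Posterior content.} For any Borel set $B\subseteq\mathbb R^p$, Theorem~\ref{thm:oracle-bvm} gives
\[
\bigl|\Pi_{n,\alpha}(B\mid Y)-\{\mathcal N_{s_0}(\widehat\beta_{S_0},(\alpha\widehat J_{S_0})^{-1})\otimes\delta_{S_0^c}\}(B)\bigr|=o_{\mP_{\beta^0}}(1)
\]
uniformly in $B$. Both sets below are data-dependent, but the bound is a supremum over sets, so this causes no difficulty.

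For $B=\mathcal C_\alpha(1-\gamma)$, the Gaussian law charges only $\{S_\beta=S_0\}$, because of the point mass on $S_0^c$ together with $\widehat\beta_{S_0}$ having a.s.\ nonzero coordinates or Lebesgue-null boundary. Under $W\sim\mathcal N(\widehat\beta_{S_0},(\alpha\widehat J_{S_0})^{-1})$, the quadratic form $\alpha(W-\widehat\beta_{S_0})^\top\widehat J_{S_0}(W-\widehat\beta_{S_0})$ is exactly $\chi^2_{s_0}$. Hence the Gaussian mass is exactly $1-\gamma$ on the event where $\widehat J_{S_0}$ is positive definite, and that event has probability tending to one by Lemma~\ref{supp-le:oracle-mle}.

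For the marginal interval, the $j$th coordinate of the Gaussian law is $\mathcal N(\widehat\beta_j,\alpha^{-1}e_j^\top\widehat J_{S_0}^{-1}e_j)$, which gives mass exactly $1-\gamma$ to $\mathcal I_{j,\alpha}$.

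\textbf{Coverage.} Here I would use the expansion of the oracle MLE,
\[
(\mF_{S_0}^0)^{1/2}(\widehat\beta_{S_0}-\beta_{S_0}^0)=Z_{S_0}+R_n,\qquad Z_{S_0}=\sum_i\zeta_i.
\]
Since $\nabla_{S_0}\ell_{n,S_0}(\widehat\beta_{S_0})=0$, a mean-value (integral) form of the score combined with Assumption~\ref{ass:truth-lan} on $\cB_{S_0}(C\sqrt{s_0})\subseteq\cB_{S_0}(r_n^0)$ gives $\|R_n\|_2\le C\varepsilon_n\|Z_{S_0}\|_2$. Because $\|Z_{S_0}\|_2=O_{\mP}(\sqrt{s_0})$, this yields $\|R_n\|_2=O_{\mP}(\varepsilon_n\sqrt{s_0})$. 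The same assumption gives $\|(\mF_{S_0}^0)^{-1/2}\widehat J_{S_0}(\mF_{S_0}^0)^{-1/2}-\mI\|_{\rm op}\le\varepsilon_n$ with probability tending to one.

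\emph{Fixed $d=s_0$.} The multivariate Lyapunov CLT with Cram\'er--Wold, applying the stated uniform Lyapunov condition to each direction $v$, gives $Z_{S_0}\Rightarrow\mathcal N(0,\mI_d)$. Since $\varepsilon_n\to0$, both $R_n$ and the sandwich error vanish. By Slutsky and the continuous mapping theorem,
\[
\alpha(\beta^0_{S_0}-\widehat\beta_{S_0})^\top\widehat J_{S_0}(\beta^0_{S_0}-\widehat\beta_{S_0})=\alpha\|Z_{S_0}\|_2^2+o_{\mP}(1)\Rightarrow\alpha\chi^2_d.
\]
Also $\beta^0$ has support $S_0$. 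Hence $\mP\{\beta^0\in\mathcal C_\alpha\}\to\mP(\chi^2_d\le q_{d,1-\gamma}/\alpha)$, which is the stated limit.

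\emph{Marginal, growing $s_0$.} Write $\sigma_j^2=e_j^\top(\mF_{S_0}^0)^{-1}e_j$, which is of order $n^{-1}$ by Assumption~\ref{ass:sparse-fisher}. Then
\[
(\widehat\beta_j-\beta_j^0)/\sigma_j=v_j^\top Z_{S_0}+v_j^\top R_n.
\]
The term $|v_j^\top R_n|\le\|R_n\|_2=O_{\mP}(\varepsilon_n\sqrt{s_0})=o_{\mP}(1)$ by the added hypothesis. The scalar sum $v_j^\top Z_{S_0}=\sum_i v_j^\top\zeta_i$ has variance one and satisfies the one-dimensional Lyapunov condition, so it converges to $\mathcal N(0,1)$.

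It remains to show $e_j^\top\widehat J_{S_0}^{-1}e_j/\sigma_j^2\to1$ in probability. This follows from the sandwich bound: writing $A=(\mF_{S_0}^0)^{1/2}\widehat J_{S_0}^{-1}(\mF_{S_0}^0)^{1/2}$, we have $\|A-\mI\|_{\rm op}\le\varepsilon_n/(1-\varepsilon_n)$, and the ratio equals $w^\top Aw$ with $w=(\mF_{S_0}^0)^{-1/2}e_j/\sigma_j$, a unit vector. Combining these gives
\[
\mP\{|\widehat\beta_j-\beta_j^0|\le z_{1-\gamma/2}\,\alpha^{-1/2}\sigma_j(1+o_{\mP}(1))\}\to2\Phi(z_{1-\gamma/2}/\sqrt\alpha)-1.
\]
The fixed-$d$ marginal case follows in the same way.

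\textbf{Main obstacle.} The delicate step is the MLE remainder bound $\|R_n\|_2=O(\varepsilon_n\sqrt{s_0})$ in Fisher norm for growing $s_0$. I would obtain it by writing
\[
0=\Delta_{S_0}-\int_0^1\nabla^2_{S_0}\ell_{n,S_0}\bigl(\beta^0_{S_0}+t(\widehat\beta_{S_0}-\beta^0_{S_0})\bigr)\dif t\,(\widehat\beta_{S_0}-\beta^0_{S_0}),
\]
then standardizing by $\mF_{S_0}^0$ and inverting the averaged Hessian, which lies within relative error $\varepsilon_n$ of $\mF_{S_0}^0$. This gives $\|R_n\|_2\le\varepsilon_n(1-\varepsilon_n)^{-1}\|Z_{S_0}\|_2$.
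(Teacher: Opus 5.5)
Your proposal is correct and follows essentially the same route as the paper. The credibility statements come from total-variation transfer, since the oracle Gaussian gives exactly $\chi^2_{s_0}$ and standard-normal masses. The coverage statements use the linearization $(\mF_{S_0}^0)^{1/2}(\widehat\beta_{S_0}-\beta_{S_0}^0)=A_n^{-1}Z_{S_0}$ of Lemma~\ref{supp-le:oracle-mle} (your $Z_{S_0}+R_n$, with $\|A_n^{-1}-\mI_{s_0}\|_{\rm op}\le\varepsilon_n/(1-\varepsilon_n)$), a Lyapunov CLT via Cram\'er--Wold or along $v_j$, and the sandwich bound $\|B_n-\mI_{s_0}\|_{\rm op}\le\varepsilon_n$ for studentization.

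The remainder bound you flag as the main obstacle is exactly that lemma. It also supplies the localization of $\widehat\beta_{S_0}$ inside $\cB_{S_0}(r_n^0)$ that your segment argument needs. One small correction: the Gaussian gives zero mass to $\{S_\beta\ne S_0\}$ because coordinate hyperplanes are null sets for a nondegenerate Gaussian; this has nothing to do with the coordinates of $\widehat\beta_{S_0}$.
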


For the Gaussian, logistic, bounded-window probit, Gamma, fixed-size negative-binomial and upper-window Poisson routes in Section~\ref{sec:glm-examples}, the Lyapunov conditions above hold with $\delta=1$ whenever $\max_i\|(\mF_{S_0}^0)^{-1/2}X_{i,S_0}\|_2=o(1)$. The moment bounds are verified in the supplement. Under the fixed-design criterion below, this maximum is $O\{(s_0/n)^{1/2}\}$, so these conditions are automatic for fixed $s_0$. For growing $s_0$, the marginal limit additionally requires $\varepsilon_n\sqrt{s_0}=o(1)$. The separate unbounded-weight Poisson route in Section~\ref{sec:random-design-examples} still requires an explicit Lyapunov condition.

Define $\widehat S_\alpha=\argmax_S\Pi_{n,\alpha}(S_\beta=S\mid Y)$ as the posterior modal support, with deterministic tie-breaking.
The corresponding selected-support ellipsoid and reported post-selection interval are
\[
\begin{aligned}
\widehat{\mathcal C}_{\alpha}(1-\gamma)
&=\left\{\beta:S_\beta=\widehat S_\alpha,\ 
\alpha(\beta_{\widehat S_\alpha}-\widehat\beta_{\widehat S_\alpha})^\top
\widehat J_{\widehat S_\alpha}
(\beta_{\widehat S_\alpha}-\widehat\beta_{\widehat S_\alpha})
\le q_{|\widehat S_\alpha|,1-\gamma}\right\},\\
\widehat{\mathcal I}_{j,\alpha}(1-\gamma)
&=\begin{cases}
e_j^\top\widehat\beta_{\widehat S_\alpha}\mathbin{\pm}z_{1-\gamma/2}
\{\alpha^{-1}e_j^\top\widehat J_{\widehat S_\alpha}^{-1}e_j\}^{1/2},
&j\in\widehat S_\alpha,\\
\varnothing,&j\notin\widehat S_\alpha.
\end{cases}
\end{aligned}
\]
Here $\varnothing$ means that no interval is reported when $j\notin\widehat S_\alpha$. The set $\widehat{\mathcal I}_{j,\alpha}$ is a post-selection report rather than a full marginal credible set for every coordinate. Take the ellipsoid to be empty and report no interval if the required maximum-likelihood estimator or inverse information is not defined.

\begin{corollary}[Selected-support inference]
\label{cor:selected-coverage}
Under conditions of Theorem~\ref{thm:oracle-bvm},
\[
\mP_{\beta^0}(\widehat S_\alpha=S_0)\longrightarrow1.
\]
The posterior-credibility conclusions and every applicable frequentist-coverage limit in Corollary~\ref{cor:coverage} remain valid after replacing $\mathcal C_{\alpha}$ and $\mathcal I_{j,\alpha}$ by $\widehat{\mathcal C}_{\alpha}$ and $\widehat{\mathcal I}_{j,\alpha}$. In particular, under the corresponding marginal-coverage conditions, the following limit holds both unconditionally and under $\mP_{\beta^0}(\,\cdot\mid j\in\widehat S_\alpha)$, since $\mP_{\beta^0}(j\in\widehat S_\alpha)\to1$,
\[
\mP_{\beta^0}\{j\in\widehat S_\alpha,\,
\beta_j^0\in\widehat{\mathcal I}_{j,\alpha}(1-\gamma)\}
\longrightarrow
2\Phi\{z_{1-\gamma/2}/\sqrt\alpha\}-1.
\]
\end{corollary}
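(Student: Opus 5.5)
The plan is to reduce everything to the event $\{\widehat S_\alpha=S_0\}$ and then invoke Corollary~\ref{cor:coverage}. First, Theorem~\ref{thm:selection} gives $\Pi_{n,\alpha}(S_\beta=S_0\mid Y)\to1$ in $\mP_{\beta^0}$-probability. On the event $E_n=\{\Pi_{n,\alpha}(S_\beta=S_0\mid Y)>1/2\}$, every other support has posterior mass below $1/2$. Hence $S_0$ is the unique maximizer, and $\widehat S_\alpha=S_0$ whatever the tie-breaking rule. Since $\mP_{\beta^0}(E_n)\to1$, this proves $\mP_{\beta^0}(\widehat S_\alpha=S_0)\to1$.

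Next, work on $E_n\cap\{\text{oracle MLE exists, }\widehat J_{S_0}\succ0\}$. The second event also has probability tending to one, by Lemma~\ref{supp-le:oracle-mle} (the existence statement preceding Assumption~\ref{ass:oracle-laplace}). On this intersection $\widehat S_\alpha=S_0$, so $\widehat\beta_{\widehat S_\alpha}=\widehat\beta_{S_0}$, $\widehat J_{\widehat S_\alpha}=\widehat J_{S_0}$ and $q_{|\widehat S_\alpha|,1-\gamma}=q_{s_0,1-\gamma}$. The hatted sets therefore coincide exactly with $\mathcal C_\alpha(1-\gamma)$ and $\mathcal I_{j,\alpha}(1-\gamma)$. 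The fallback convention of an empty ellipsoid and an empty interval is never triggered there.

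For the posterior-credibility statements, write
\[
\Pi_{n,\alpha}\{\widehat{\mathcal C}_\alpha\mid Y\}=\Pi_{n,\alpha}\{\mathcal C_\alpha\mid Y\}+R_n,
\]
where $R_n$ vanishes off the complement of an event of probability tending to one. Then $R_n=o_{\mP_{\beta^0}}(1)$, and Corollary~\ref{cor:coverage} gives the limit $1-\gamma$; the marginal interval is treated the same way. For the frequentist limits, observe that
\[
\bigl|\mP_{\beta^0}\{\beta^0\in\widehat{\mathcal C}_\alpha\}-\mP_{\beta^0}\{\beta^0\in\mathcal C_\alpha\}\bigr|\le\mP_{\beta^0}(\widehat S_\alpha\ne S_0)+o(1)\to0.
\]
The same bound holds for the interval events, so each applicable coverage limit transfers unchanged.

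For the final display, fix an active coordinate $j$. Then $\{j\notin\widehat S_\alpha\}\subseteq\{\widehat S_\alpha\ne S_0\}$, so $\mP_{\beta^0}(j\in\widehat S_\alpha)\to1$. The joint event $\{j\in\widehat S_\alpha,\ \beta_j^0\in\widehat{\mathcal I}_{j,\alpha}\}$ differs from $\{\beta_j^0\in\mathcal I_{j,\alpha}\}$ only on $\{\widehat S_\alpha\ne S_0\}$ up to a null-probability exceptional set. Its probability therefore converges to $2\Phi\{z_{1-\gamma/2}/\sqrt\alpha\}-1$ by \eqref{eq:fractional-marginal-coverage}. The conditional version follows by dividing by $\mP_{\beta^0}(j\in\widehat S_\alpha)\to1$.

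There is no genuine analytic obstacle. The only point needing care is that $\widehat S_\alpha$ is data-dependent, so the hatted sets are random objects indexed by a random support. That is why I compare events on the high-probability set $E_n$ rather than plugging in directly. I also need to check that the argmax is measurable and that the tie-breaking convention does not matter on $E_n$.
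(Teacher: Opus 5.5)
Your proposal is correct and follows the paper's proof essentially step for step. The $>1/2$ posterior-mass event forces $\widehat S_\alpha=S_0$ regardless of tie-breaking. On its intersection with the oracle-MLE event of Lemma~\ref{supp-le:oracle-mle}, the hatted regions coincide with $\mathcal C_\alpha$ and $\mathcal I_{j,\alpha}$. Every discrepancy is then bounded by the indicator of the complementary event, and the conditional statement follows from $\mP_{\beta^0}(j\in\widehat S_\alpha)\to1$. One wording fix: the exceptional set where the oracle MLE fails has probability tending to zero, not probability zero. On that set the hatted interval is empty, while the oracle interval uses the convention $\widehat\beta_{S_0}=0$, $\widehat J_{S_0}=\mI_{s_0}$, so the two events can differ there; your earlier $o(1)$ bound already covers this, so the argument is unaffected.
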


\begin{remark}[Calibration under fractional posteriors]
Under the respective coverage conditions of Corollary~\ref{cor:coverage}, the unadjusted oracle ellipsoid (for fixed $s_0$) and marginal interval have nominal frequentist coverage $1-\gamma$ at $\alpha=1$, but are conservative at every fixed $\alpha<1$. Multiplying the threshold in \eqref{eq:oracle-credible-ellipsoid} by $\alpha$ or removing $\alpha^{-1/2}$ from the half-width in \eqref{eq:oracle-marginal-interval} yields the corresponding nominal Wald region, but changes its fractional-posterior credibility. See \citet{syring2019calibrating} for a broader perspective on posterior calibration.
\end{remark}

\section{Examples}
\label{sec:examples}

We verify the slab and likelihood conditions below. Corollary~\ref{cor:primitive-six-models} combines these with the separately imposed Assumptions~\ref{ass:prior} and \ref{ass:signal} on the model-size prior and signal separation.

\subsection{Slab priors}
\label{sec:slab-examples}

We verify Assumption~\ref{ass:slab} and the oracle slab condition in Assumption~\ref{ass:oracle-laplace} for Gaussian, Laplace and Student-$t$ product slabs under the truth-size and local-radius conditions below. For fixed $\sigma,\lambda,\nu>0$, define
\[
\begin{gathered}
g_{\mathrm G,\sigma}(x)=\frac{1}{\sqrt{2\pi\sigma^2}}
      \exp\left(-\frac{x^2}{2\sigma^2}\right),
\qquad
g_{\mathrm L,\lambda}(x)=\frac{\lambda}{2}\exp(-\lambda|x|),\\
g_{\mathrm T,\nu,\sigma}(x)=\frac{\Gamma\{(\nu+1)/2\}}
{\Gamma(\nu/2)(\nu\pi)^{1/2}\sigma}
\left(1+\frac{x^2}{\nu\sigma^2}\right)^{-(\nu+1)/2}.
\end{gathered}
\]

\begin{proposition}[Product slabs]
\label{prop:slab-verification}
Suppose Assumption~\ref{ass:sparse-fisher} holds and $r_n^0/\sqrt{s_n^\star\log p}\to\infty$. Put $\rho_n=r_n^0/(n c_F)^{1/2}$. Assumption~\ref{ass:slab} and the oracle slab condition in Assumption~\ref{ass:oracle-laplace} hold for each of the following coherent product families $g_S(\beta_S)=\prod_{j\in S}g(\beta_j)$:
\begin{enumerate}[label=\textnormal{(\roman*)},leftmargin=*]
\item $g=g_{\mathrm G,\sigma}$, provided
$\|\beta^0\|_\infty^2=O(\log p)$ and
$\rho_n\|\beta^0\|_2+\rho_n^2=O(1)$;
\item $g=g_{\mathrm L,\lambda}$, provided
$\|\beta^0\|_\infty=O(\log p)$ and $\sqrt{s_n^\star}\rho_n=O(1)$;
\item $g=g_{\mathrm T,\nu,\sigma}$, provided
$\log(1+\|\beta^0\|_\infty)=O(\log p)$ and $\sqrt{s_n^\star}\rho_n=O(1)$.
\end{enumerate}
In each case the slab has bounded log-density variation on the selection neighbourhoods and vanishing log-density variation on every fixed multiple of the oracle scale $\sqrt{s_0/n}$.
\end{proposition}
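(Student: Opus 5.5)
The proof is a direct per-family check of the three requirements: the lower bound $\min_j g(\beta_j^0)\ge p^{-C_g}$, bounded log-density variation on the selection neighbourhoods $\|u\|_2\le\rho_n$ with $S\in\cS_n^+$, and vanishing variation on $\|u\|_2\le M\sqrt{s_0/n}$. A preliminary observation drives the last requirement. Since $r_n^0/\sqrt{s_n^\star\log p}\to\infty$, we have $\sqrt{s_0/n}=o(\rho_n/\sqrt{s_n^\star})$, and in particular $\sqrt{s_0/n}=o(\rho_n)$. Thus any bound of the form $O(\sqrt{s_n^\star}\rho_n)$ or $O(\rho_n\|\beta^0\|_2+\rho_n^2)$ on the selection scale becomes $o(1)$ at the oracle scale. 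The vanishing condition therefore follows from the same inequalities with $\rho_n$ replaced by $M\sqrt{s_0/n}$, together with the gain factor $\sqrt{s_0/n}/\rho_n\cdot\sqrt{s_n^\star}\to0$. Throughout, $\log p\to\infty$ absorbs constants such as $\log\sigma$ and $\log\lambda$.

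\emph{Gaussian slab.} For the density lower bound, $-\log g_{\mathrm G,\sigma}(\beta_j^0)=\tfrac12\log(2\pi\sigma^2)+(\beta_j^0)^2/(2\sigma^2)=O(\log p)$ because $\|\beta^0\|_\infty^2=O(\log p)$. For the variation, $|\log g_S(\beta_S^0+u)-\log g_S(\beta_S^0)|=|2u^\top\beta_S^0+\|u\|_2^2|/(2\sigma^2)\le(\|u\|_2\|\beta^0\|_2+\|u\|_2^2)/(2\sigma^2)$. This is $O(1)$ for $\|u\|_2\le\rho_n$ by hypothesis. At the oracle scale it becomes $(\sqrt{s_0/n}/\rho_n)\,O(1)+o(1)=o(1)$.

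\emph{Laplace slab.} We have $-\log g_{\mathrm L,\lambda}(\beta_j^0)=\lambda|\beta_j^0|-\log(\lambda/2)=O(\log p)$. By the triangle inequality and Cauchy--Schwarz,
$\bigl|\sum_{j\in S}\lambda(|\beta_j^0+u_j|-|\beta_j^0|)\bigr|\le\lambda\|u\|_1\le\lambda\sqrt{|S|}\|u\|_2\le\lambda\sqrt{s_n^\star}\rho_n=O(1)$.
On $S_0$ the same bound gives $\lambda\sqrt{s_0}M\sqrt{s_0/n}=\lambda M s_0/\sqrt n$. This is $o(1)$ because $s_0/\sqrt n\le\sqrt{s_n^\star}\sqrt{s_0/n}$ and $\sqrt{s_0/n}=o(\rho_n)$.

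\emph{Student-$t$ slab.} We have $-\log g_{\mathrm T,\nu,\sigma}(\beta_j^0)=O(1)+\tfrac{\nu+1}{2}\log\{1+(\beta_j^0)^2/(\nu\sigma^2)\}=O(\log(1+\|\beta^0\|_\infty))=O(\log p)$. The function $\phi(x)=\tfrac{\nu+1}{2}\log(1+x^2/(\nu\sigma^2))$ has $|\phi'(x)|\le(\nu+1)/(2\sqrt\nu\,\sigma)$ uniformly, since $2x/(a+x^2)\le1/\sqrt a$. Hence $\phi$ is Lipschitz with a constant $L$ not depending on $\beta^0$. The variation is then at most $L\|u\|_1\le L\sqrt{|S|}\|u\|_2$, and the argument finishes exactly as in the Laplace case.

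The main point needing care is the Gaussian case. Its variation bound is not uniform over the location $\beta^0$, so the cross term $u^\top\beta_S^0$ must be controlled by $\|\beta_S^0\|_2\le\|\beta^0\|_2$. The rate comparison $\sqrt{s_0/n}\ll\rho_n$ must then supply the decay at the oracle scale. Everything else is routine.
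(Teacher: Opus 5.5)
Your proposal is correct and follows the paper's proof essentially step for step. It uses the same per-family truth-density bounds. It uses the quadratic variation bound for the Gaussian slab, and global Lipschitz bounds for the Laplace and Student-$t$ slabs via $\lambda\|u\|_1\le\lambda\sqrt{|S|}\|u\|_2$ and $|\phi'|\le(\nu+1)/(2\sigma\sqrt\nu)$. It transfers to the oracle radius $t_n=M\sqrt{s_0/n}$ through $t_n/\rho_n=M\sqrt{c_Fs_0}/r_n^0\to0$, with $\sqrt{s_0}\,t_n\le\sqrt{s_n^\star}\rho_n\,(t_n/\rho_n)$ in the Lipschitz cases.

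You should drop your opening claim $\sqrt{s_0/n}=o(\rho_n/\sqrt{s_n^\star})$, i.e.\ that the ``gain factor'' tends to zero. It is equivalent to $r_n^0\gg\sqrt{s_0s_n^\star}\asymp s_0$, which does not follow from $r_n^0\gg\sqrt{s_n^\star\log p}$ unless $s_0=O(\log p)$. None of your case arguments actually uses it, so the proof stands without it. Separately, the Gaussian cross term should be $\|u\|_2\|\beta^0\|_2/\sigma^2$ rather than half that, which only changes a constant.
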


The Gaussian condition couples the truth's Euclidean norm with the local likelihood radius. The Laplace and Student-$t$ log-densities are globally Lipschitz, so their flatness conditions do not depend on the truth size. The Student-$t$ slab has only logarithmic point-density cost, permitting coefficients as large as a fixed power of $p$.

\subsection{Examples of generalized linear models}
\label{sec:glm-examples}

Table~\ref{tab:six-models} collects the model-specific formulas and restrictions. A common average-spread condition covers the five non-Gaussian likelihoods; Gaussian regression is exactly quadratic and does not require it.

\begin{table}[!ht]
\centering
\setlength{\tabcolsep}{6pt}
\tbl{Natural-parameter maps, Fisher weights and model-specific restrictions.}{%
\footnotesize
\begin{tabular}{@{}llll@{}}
\toprule
Model & $\xi(\eta)$ & Fisher weight $w_i^0$ & restriction\\
\midrule
Gaussian & $\eta$ & $\tau_i^{-1}$ & known bounded variances\\
Logistic & $\eta$ & $e^{\eta_i^0}(1+e^{\eta_i^0})^{-2}$ & none\\
Poisson & $\eta$ & $e^{\eta_i^0}$ & none\\
Probit & $\log\{\Phi(\eta)/(1-\Phi(\eta))\}$
& $\phi(\eta_i^0)^2/[\Phi(\eta_i^0)\{1-\Phi(\eta_i^0)\}]$
& $\max_i|\eta_i^0|\le L$\\
Gamma log-link & $-e^{-\eta}$ & $\tau_i^{-1}$ & known bounded dispersions\\
Negative-binomial log-link
& $\eta-\log(\kappa+e^\eta)$
& $\kappa e^{\eta_i^0}/(\kappa+e^{\eta_i^0})$ & fixed known $\kappa$\\
\bottomrule
\end{tabular}}
\label{tab:six-models}

\vspace{1pt}
\parbox{\dimexpr\textwidth-2\tabledim\relax}{\scriptsize Variances and dispersions are bounded above and away from zero; $L<\infty$ and $\kappa>0$ are fixed. ``None'' means no extra model-specific restriction beyond the common assumptions. Here $\phi$ and $\Phi$ are the standard normal density and distribution functions.}
\end{table}

For $S\in\cS_n^+$, put $z_{i,S}=(\mF_S^0)^{-1/2}X_{i,S}$. We use the entry bounds
\begin{align}
\max_{i,j}|X_{ij}|&\le K_X,
\qquad
\max_{i,j}(w_i^0)^{1/2}|X_{ij}|\le K_V,
\label{eq:simple-design-entries}
\end{align}
for fixed $K_X,K_V<\infty$, where $w_i^0$ is the truth-Fisher weight. For the five non-Gaussian models, average spread means that, uniformly over $S\in\cS_n^+$ and unit $u$,
\begin{equation}
 \sum_iw_i^0|z_{i,S}^\top u|^3\le Cn^{-1/2},
 \qquad
 \sum_i(w_i^0)^2(z_{i,S}^\top u)^4\le Cn^{-1}.
 \label{eq:simple-spread-moments}
\end{equation}

\begin{proposition}[Fixed-design GLMs]
\label{prop:simple-fixed-design}
Consider any of the six models in Table~\ref{tab:six-models} under its stated restriction. Suppose Assumption~\ref{ass:sparse-fisher} and \eqref{eq:simple-design-entries} hold. For a non-Gaussian model, also suppose \eqref{eq:simple-spread-moments}. Then Assumptions~\ref{ass:score} and \ref{ass:truth-lan} and the likelihood part of Assumption~\ref{ass:oracle-laplace} hold under the applicable rates below, with only the first required for Gaussian regression,
\begin{equation}
 \frac{s_0^2\log p}{n}\longrightarrow0,
 \qquad
 \begin{cases}
 s_0^3/(n\log p)\longrightarrow0,
 &\text{logistic, Poisson and bounded-window probit},\\
 s_0^3/n\longrightarrow0,
 &\text{Gamma and negative binomial}.
 \end{cases}
 \label{eq:simple-fixed-growth}
\end{equation}
\end{proposition}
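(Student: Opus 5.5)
We verify the three likelihood conditions model by model, using a single template with model-specific inputs. The template needs three properties of the link: exact formulas for the score and Hessian, a bound on the centred response's moment generating function, and a local Lipschitz-type bound on the Hessian weights.

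\textbf{Score and Hessian.} Write the score and Hessian in the form
\[
\Delta_S=\sum_i X_{i,S}\,\frac{\xi'(\eta_i^0)}{\tau_i}(Y_i-\mu_i^0),
\qquad
-\nabla_S^2\ell_{n,S}(\beta_S)=\sum_i X_{i,S}X_{i,S}^\top\,\omega_i(\eta_i;Y_i),
\]
where $\omega_i(\eta;y)=\{b''(\xi)\xi'^2-(y-b'(\xi))\xi''\}/\tau_i$ evaluated at $\xi=\xi(\eta)$. In the canonical models ($\xi''=0$), $\omega_i$ is deterministic. For probit, Gamma and negative binomial, $\omega_i$ contains a random term linear in $Y_i-\mu_i$.

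\textbf{Score tails (Assumption~\ref{ass:score}).} Fix $u$ and set $a_i=z_{i,S}^\top u$. Then $u^\top(\mF_S^0)^{-1/2}\Delta_S=\sum_i a_i\,\xi'(\eta_i^0)\tau_i^{-1}(Y_i-\mu_i^0)$. The cumulant generating function of $Y_i-\mu_i^0$ is $\tau_i^{-1}\{b(\theta_i+t\tau_i)-b(\theta_i)\}-t\mu_i^0$. A third-order Taylor expansion combined with the exponential-family cumulant bound $|b'''|\lesssim b''$ gives a Bernstein bound whose variance proxy is $\sum_i w_i^0a_i^2=1$. The remaining scale parameter is
\[
d_n\lesssim\max_i |a_i|\,(w_i^0)^{1/2}\times\text{(link factor)}.
\]
Using \eqref{eq:simple-design-entries} and Assumption~\ref{ass:sparse-fisher}, $|a_i|\le\|z_{i,S}\|_2\lesssim K_X\sqrt{s_n^\star/n}$. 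Hence $d_n\lesssim\sqrt{s_0/n}$, and $d_n\sqrt{s_n^\star\log p}\to0$ follows from $s_0^2\log p/n\to0$. In the Gaussian model $d_n=0$. For logistic and probit the responses are bounded, so the bound is immediate. For Gamma and negative binomial, boundedness of $\tau_i$ and fixed $\kappa$ keep the link factor bounded.

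\textbf{Selection curvature (Assumption~\ref{ass:truth-lan}).} Decompose the sandwiched Hessian deviation into two pieces.
\begin{itemize}
\item[(a)] The deterministic piece is $\sum_i z_{i,S}z_{i,S}^\top\{\bar\omega_i(\eta_i^0+X_{i,S}^\top h)-w_i^0\}$, where $\bar\omega_i$ is the mean weight. Use a local bound $|\bar\omega_i(\eta)-w_i^0|\le Cw_i^0|\eta-\eta_i^0|e^{C|\eta-\eta_i^0|}$, valid for logistic/Poisson/probit on the bounded window and for Gamma and negative binomial. With $|X_{i,S}^\top h|\le\sqrt{nC_F}\|z_{i,S}\|_2\, r\lesssim r\sqrt{s_n^\star/n}$ and the cubic spread moment in \eqref{eq:simple-spread-moments}, this gives $\varepsilon_n\lesssim r_n^0/\sqrt n$ after a Hölder step over $u$.
\item[(b)] For noncanonical links, the random piece is $\sum_i z_{i,S}z_{i,S}^\top(Y_i-\mu_i)\xi''_i/\tau_i$. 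Control it by a net over $u\in\mathbb S^{|S|-1}$ and a union bound over $S\in\cS_n^+$, at a cost $\lesssim s_n^\star\log p$, together with Bernstein and the quartic spread moment. This gives size $\sqrt{s_n^\star\log p/n}$.
\end{itemize}

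\textbf{Choosing $r_n^0$ and the rate constraints.} Take $r_n^0=\sqrt{s_n^\star\log p}\,\omega_n$ with $\omega_n\to\infty$ slowly. We then need $\varepsilon_n(s_0+\log p)=o(\log p)$.
\begin{itemize}
\item For canonical and bounded-window probit models, (a) gives $\varepsilon_n\lesssim\omega_n\sqrt{s_0\log p/n}$. The requirement $\varepsilon_n s_0=o(\log p)$ becomes $s_0^3/(n\log p)\to0$ after absorbing $\omega_n$, matching \eqref{eq:simple-fixed-growth}. The probit random term is treated as in (b), but its size is dominated here.
\item For Gamma and negative binomial, the random Hessian term (b) has size $\sqrt{s_0\log p/n}$ without the compensating structure. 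Its product with $s_0$ yields the stronger requirement $s_0^3/n\to0$, up to the $\log p$ bookkeeping. I expect this bookkeeping to require care.
\end{itemize}
Concavity must also be checked. It is automatic for canonical links. For probit it is the known log-concavity. For Gamma log-link and negative binomial, samplewise concavity fails globally. There I would verify the concavity used in the proofs only on the neighbourhood, or replace it by the relative-information bound. \textbf{This is the main obstacle}, and I would try to show that the proofs only use concavity along radial lines within $\cB_S(r_n^0)$.

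\textbf{Oracle condition (Assumption~\ref{ass:oracle-laplace}).} Work on $S_0$ and reuse (a)–(b), now centred at $\widehat\beta_{S_0}$. Lemma~\ref{supp-le:oracle-mle} gives $\|(\mF_{S_0}^0)^{1/2}(\widehat\beta_{S_0}-\beta^0_{S_0})\|_2=O(\sqrt{s_0})$ and $\widehat J_{S_0}\asymp\mF^0_{S_0}$. Third-derivative bounds from the cubic spread moment give a Lipschitz constant for the sandwiched Hessian, with $\bar\kappa_n^0\lesssim\sqrt{s_0/n}$ in the canonical case. We need $s_0\bar\kappa_n^0\to0$, i.e. $s_0^3/n\to0$, which is implied by \eqref{eq:simple-fixed-growth}. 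For noncanonical links the random third-derivative term adds another net argument. For Gaussian regression the Hessian is constant, so $\varepsilon_n=\bar\kappa_n^0=0$ and only $s_0^2\log p/n\to0$, from the score step, is needed.
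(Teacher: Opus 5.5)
Your architecture matches the paper's (Proposition~\ref{supp-pr:truth-centred-verification} with the model envelopes). You split $-\ell_i''(\eta)=A_i(\eta)-B_i(\eta)(Y_i-\mu_i^0)$, where $A_i$ is your truth-mean weight $\bar\omega_i$ and $B_i=\tau_i^{-1}\xi''$. You bound the mean part by a mean-value argument plus the cubic spread moment, the residual part by Bernstein's inequality over nets and supports, and take $r_n^0=M_n\sqrt{s_n^\star\log p}$. Three steps are wrong as written, however.

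\emph{Concavity.} The ``main obstacle'' you name does not exist, and your workaround would not resolve it if it did. Both log-link models are samplewise concave on all of $\mathbb R^{|S|}$. For Gamma, $\ell_i(\eta)=\tau_i^{-1}(-Y_ie^{-\eta}-\eta)+C_i$, so $-\ell_i''(\eta)=\tau_i^{-1}Y_ie^{-\eta}\ge0$. For negative binomial, $\ell_i(\eta)=Y_i\eta-(Y_i+\kappa)\log(\kappa+e^{\eta})+C_i$, so $-\ell_i''(\eta)=(Y_i+\kappa)\kappa e^{\eta}/(\kappa+e^{\eta})^2\ge0$. Concavity only along rays inside $\cB_S(r_n^0)$ would not give Assumption~\ref{ass:truth-lan}, which requires global concavity. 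Every downstream use also needs concavity beyond that ball: the radial decay \eqref{supp-eq:rec-radial} for $t>r_n^0$, global optimality and uniqueness of the oracle MLE in Lemma~\ref{supp-le:oracle-mle}, and the hypothesis of Lemma~\ref{supp-le:rec-sharp-laplace}.

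\emph{Rate bookkeeping.} It fails in both directions. In the oracle step you take $\bar\kappa_n^0\lesssim\sqrt{s_0/n}$ and so need $s_0^3/n\to0$. For logistic, Poisson and probit, \eqref{eq:simple-fixed-growth} does not imply this. With $\log p\asymp n^{1/10}$ and $s_0\asymp(n\log p)^{1/3}/\log\log n$, both displayed rates hold while $s_0^3/n\to\infty$. The fix is to bound the directional derivative $\sum_iA_i'(\cdot)(z_{i,S_0}^\top a)(z_{i,S_0}^\top u)^2$ by the mixed spread quantity $\mathfrak t_\star(s_0)$, not by the worst-row influence. H\"older applied to \eqref{eq:simple-spread-moments} gives $\mathfrak t_\star(s_0)\lesssim n^{-1/2}$, so $s_0\bar\kappa_n^0\to0$ needs only $s_0^2=o(n)$.

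Conversely, a random-curvature term of size $\sqrt{s_0\log p/n}$, multiplied by $s_0$, yields only $s_0^3/(n\log p)\to0$. It therefore cannot explain the Gamma and negative-binomial rate. What you dropped is the linear Bernstein term for these unbounded responses. Its scale satisfies $\max_ic_{Y,i}|B_i|(z_{i,S}^\top u)^2\lesssim\max_iw_i^0\|z_{i,S}\|_2^2\lesssim s_0/n$. Multiplied by the union cost $s_n^\star\log p$, this gives $s_0^2\log p/n$, which can dominate $\sqrt{s_0\log p/n}$. After multiplication by $(s_0+\log p)/\log p$, it is exactly what forces $s_0^3/n\to0$. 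Probit escapes this term because bounded residuals allow Hoeffding's inequality.

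\emph{Net over $h$.} In the residual term, $B_i$ is evaluated at $\eta_i^0+z_{i,S}^\top h$, so a net in $u$ alone does not control the supremum over $\cB_S(r_n^0)$. The union bound must also run over a fine net in $h$ (the paper uses mesh $\{\log(np)\}^{-1}$). The interpolation error is then controlled by the $B_i'$ envelope and a maximal-response event.
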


For bounded Fisher weights, the second entry bound in \eqref{eq:simple-design-entries} follows from the first. Proposition~\ref{supp-prop:primitive-bridge} in the supplement relaxes these entry bounds, and Proposition~\ref{supp-prop:random-design-geometry} verifies average spread for sub-Gaussian designs. Subsections~\ref{supp-sec:fixed-design-verification} and~\ref{supp-sec:six-model-cards} give the score, curvature and concavity calculations for the following examples.

\begin{example}[Gaussian regression]
The score is Gaussian and the likelihood is quadratic, so neither average spread nor an additional non-Gaussian rate is needed.
\end{example}

\begin{example}[Logistic regression]
The Fisher weight is bounded by $1/4$. The bounded Bernoulli score and response-independent curvature require no truth-predictor window.
\end{example}

\begin{example}[Poisson regression]
The exact centred moment-generating function and canonical curvature give the required likelihood bounds. An upper predictor window is sufficient for the weighted-entry bound but is not otherwise required. Proposition~\ref{prop:simple-random-design} also covers unbounded Fisher weights.
\end{example}

\begin{example}[Probit regression]
The stated predictor window bounds the link derivatives and Fisher weights. Together with bounded Bernoulli residuals, this gives the required score and curvature controls.
\end{example}

\begin{example}[Gamma regression]
For the log link, standardization by the truth mean gives uniform score and curvature bounds without a truth-predictor window.
\end{example}

\begin{example}[Negative-binomial regression]
The Fisher weight is bounded by $\kappa$. The link derivative offsets the response-scale growth, giving uniform score and curvature bounds without a truth-predictor window.
\end{example}

In particular, to obtain the logistic benchmark in Table~\ref{tab:closest-work}, impose the sparse Fisher geometry and the fixed-design conditions in Proposition~\ref{prop:simple-fixed-design}. Use $\pi_p(s)\propto p^{-as}$ with fixed $a>1$, a fixed-scale Laplace product slab and $\|\beta^0\|_\infty=O(\log p)$. For sufficiently large $K_0$ and $C_\alpha$, the conditions
\begin{equation}\label{eq:logistic-benchmark-growth}
s_0\le\log p,\qquad s_0^2\log p=o(n),\qquad
nb_0^2\ge C_\alpha\log p
\end{equation}
give exact selection and the oracle BvM theorem. The constant $C_\alpha$ may depend on $a$. The true-Fisher covariance replacement and marginal coverage conditions for every fixed active coordinate are automatic, as are the ellipsoid coverage conditions for fixed $s_0$. Under $s_0\le\log p$, the growth condition $s_0^2\log p=o(n)$ also implies the secondary logistic rate $s_0^3=o(n\log p)$ in Proposition~\ref{prop:simple-fixed-design}. The oracle curvature requirement is weaker. Since $\bar\kappa_n^0=O(n^{-1/2})$, the condition $s_0\bar\kappa_n^0=o(1)$ requires only $s_0^2=o(n)$. The conditional Gaussian approximation also uses localization and slab flatness, as specified in Proposition~\ref{prop:conditional-bvm}. The radius choices and these refinements are verified in the supplement.

\subsection{Random designs}
\label{sec:random-design-examples}

The random-design result derives the fixed-design likelihood conditions from sub-Gaussian tails and population sparse-eigenvalue bounds. Write $\mathbb P_{\mX}$ and $\mathbb E_{\mX}$ for probability and expectation under the design law, and let $\|Z\|_{\psi_2}=\inf\{c>0:\mathbb E_{\mX}\exp(Z^2/c^2)\le2\}$.

The bounded-weight route covers Gaussian, logistic, probit, Gamma log-link and fixed-size negative-binomial regression under the corresponding restrictions above. The probit predictor window must hold $\mathbb P_{\mX}$-almost surely, which excludes nondegenerate Gaussian designs with $\beta^0\ne0$. This route also covers Poisson regression when $\max_iX_i^\top\beta^0\le L<\infty$ almost surely. Set $a_i=w_i^0$ for this route. The unbounded-weight route is specific to Poisson regression and assumes $\|\beta^0\|_2\le B<\infty$. Set $a_i=1$ for this route. In both cases define $\overline\Sigma_n=(1/n)\sum_{i=1}^n\mathbb E_{\mX}(a_iX_iX_i^\top)$.

\begin{proposition}[Sub-Gaussian designs]
\label{prop:simple-random-design}
Suppose $X_1,\ldots,X_n$ are independent and $\sup_i\sup_{\|u\|_2=1}\|u^\top X_i\|_{\psi_2}\le K$ for a fixed $K<\infty$. Suppose one of the two routes above applies and the eigenvalues of $\overline\Sigma_n$ on every coordinate subspace of size at most $s_n^\star$ are bounded above and away from zero by fixed constants. Suppose also that $s_n^\star=o(p)$ and $\log n=O(\log p)$. Then there are design events $\mathcal X_n$, with $\mathbb P_{\mX}(\mathcal X_n)\to1$, such that Assumptions~\ref{ass:sparse-fisher} and \ref{ass:score} hold for every $\mX\in\mathcal X_n$ and the likelihood events in Assumptions~\ref{ass:truth-lan} and \ref{ass:oracle-laplace} have conditional probability $1-o(1)$ uniformly over $\mathcal X_n$, provided
\begin{equation}\label{eq:simple-random-growth}
s_0\le\log p,\qquad s_0^2(\log p)^2/n\longrightarrow0.
\end{equation}
\end{proposition}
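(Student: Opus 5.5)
The plan is to build the design events $\mathcal X_n$ as an intersection of a few high-probability events. On $\mathcal X_n$ the random design behaves like a fixed design satisfying the hypotheses of Proposition~\ref{prop:simple-fixed-design}, or its relaxed version Proposition~\ref{supp-prop:primitive-bridge}. The fixed-design conclusions are then invoked conditionally on $\mX$.

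\textbf{Step 1: sparse Fisher geometry.} The first event is sparse-eigenvalue concentration. Put $\widehat\Sigma_n=n^{-1}\sum_i a_iX_iX_i^\top$. I would show
\[
\sup_{|S|\le s_n^\star}\|\widehat\Sigma_{n,S}-\overline\Sigma_{n,S}\|_{\rm op}\to0 .
\]
\begin{itemize}
\item \emph{Bounded-weight route.} Here $a_i=w_i^0$ is bounded. For a fixed $S$ and unit $u$, $a_i(u^\top X_i)^2$ is sub-exponential. Bernstein's inequality, a $1/4$-net on the unit sphere of $\mathbb R^{|S|}$, and a union bound over the $\binom{p}{s_n^\star}\le \e^{s_n^\star\log p}$ supports give deviation $O(\sqrt{s_0\log p/n}+s_0\log p/n)$. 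This is $o(1)$ by \eqref{eq:simple-random-growth}.
\item \emph{Unbounded-weight route.} Here $a_i=1$, so the same argument controls $n^{-1}\mX_S^\top\mX_S$. The Poisson weights $w_i^0=\e^{X_i^\top\beta^0}$ are then handled separately. Since $\|\beta^0\|_2\le B$, the predictor $X_i^\top\beta^0$ is sub-Gaussian with $\psi_2$-norm at most $KB$. Hence $\max_i|X_i^\top\beta^0|\le C\sqrt{\log n}$ with high probability. Truncating at that level and using $\mathbb E\e^{c|X_i^\top\beta^0|}<\infty$ gives two-sided eigenvalue bounds for $n^{-1}\mF_S^0$.
\end{itemize}
Either way, combining with the population bounds on $\overline\Sigma_n$ gives Assumption~\ref{ass:sparse-fisher}. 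The condition $s_n^\star=o(p)$ is given.

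\textbf{Step 2: entry and average-spread controls.} The second event controls the quantities used in the fixed-design route. For the maximal entries, a union bound over $np$ sub-Gaussian variables gives $\max_{i,j}|X_{ij}|\le C\sqrt{\log p}$, using $\log n=O(\log p)$. This replaces the bounded entries of \eqref{eq:simple-design-entries}, which is why Proposition~\ref{supp-prop:primitive-bridge} is needed: it allows $K_X$ to grow.

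For the spread moments \eqref{eq:simple-spread-moments}, I would invoke Proposition~\ref{supp-prop:random-design-geometry}. With $z_{i,S}=(\mF_S^0)^{-1/2}X_{i,S}$ and Step 1,
\[
|z_{i,S}^\top u|\lesssim n^{-1/2}|v^\top X_{i,S}|
\]
for a unit $v$. So
\[
\sum_i w_i^0|z_{i,S}^\top u|^3\lesssim n^{-3/2}\sum_i w_i^0|v^\top X_i|^3 .
\]
This is bounded uniformly over sparse $v$ by chaining, with the union bound again costing $s_0\log p$. A polylogarithmic loss, of order $\sqrt{\log p}$ per extra factor, enters through the maximal inequalities. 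I expect this loss is exactly what turns the fixed-design rate $s_0^2\log p/n\to0$ into $s_0^2(\log p)^2/n\to0$. It also means $s_0\le\log p$ absorbs the cubic rates in \eqref{eq:simple-fixed-growth}.

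\textbf{Step 3: transfer.} On $\mathcal X_n$, apply the fixed-design proposition (via the bridge) conditionally on $\mX$.
\begin{itemize}
\item The Bernstein score bound of Assumption~\ref{ass:score} is deterministic given $\mX$. It holds with $d_n\asymp\max_{i,S}\|z_{i,S}\|_2$ times a residual scale. Step 2 gives $d_n\lesssim\sqrt{s_0\log p/n}$, so $d_n\sqrt{s_n^\star\log p}\lesssim s_0\log p/\sqrt n\to0$.
\item The curvature events in Assumptions~\ref{ass:truth-lan} and \ref{ass:oracle-laplace} are $Y$-events. The fixed-design proof bounds their failure probability by quantities depending on $\mX$ only through the controls of Step 2, so the bound is uniform over $\mathcal X_n$.
\end{itemize}

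\textbf{Main obstacle.} I expect the hardest part to be the unbounded Poisson route. There the Fisher weights $\e^{X_i^\top\beta^0}$ and the third-derivative terms $\e^{X_i^\top(\beta^0+h)}$ on $\cB_S(r_n^0)$ are not bounded. Over this neighbourhood, $|X_i^\top h|\le \|X_{i,S}\|_2\, r_n^0/\sqrt{nc_F}$. I would first try choosing $r_n^0$ just above $\sqrt{s_n^\star\log p}$, so that $\max_i|X_i^\top h|=O(s_0\log p/\sqrt n)\to0$. Then $\e^{X_i^\top h}=1+o(1)$ uniformly and the relative-information error $\varepsilon_n$ is small. The heavy weights themselves would be controlled by truncating $X_i^\top\beta^0$ at $C\sqrt{\log n}$ and bounding the exceptional contribution by its exponential moment.
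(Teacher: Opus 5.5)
Your architecture matches the paper's. You build good design events by sub-Gaussian concentration, then apply the fixed-design likelihood verification conditionally on $\mX$, with response-failure probabilities uniform over $\mathcal X_n$. The paper's own proof is a short reduction. With $s_0\le\log p$ and $\log n=O(\log p)$ one has $\mathcal H_n(s_n^\star)\lesssim s_0\log p$ and $s_n^\star+\log p\lesssim\log p$. Hence \eqref{eq:simple-random-growth} implies the growth conditions of the supplementary random-design bridge (bounded weights make $(w_i^0)^{1/2}X_i$ sub-Gaussian) and of the Poisson-specific bridge.

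\emph{A correction to Step~2.} Proposition~\ref{supp-prop:primitive-bridge} does not let $K_X$ grow. It assumes $q_\star(s_n^\star)\le C\sqrt{s_n^\star/n}$ and $\ell_\star(s_n^\star)\le Cs_n^\star/n$ with fixed $C$, and random rows violate these by factors of order $\sqrt{\log p}$ and $\log p$. You need the influence-based Proposition~\ref{supp-pr:truth-centred-verification} with $q_\star(s)\lesssim\{\mathcal H_n(s)/n\}^{1/2}$ and $\ell_\star(s)\lesssim\mathcal H_n(s)/n$. The extra $\log p$ in the rate enters there, through $q_\star(s_n^\star)r_n^0\to0$ and $\ell_\star(s_n^\star)s_n^\star\log p\to0$. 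It also enters through the requirement $\mathcal H_n(s_n^\star)^2=o(n)$ for uniform empirical fourth moments. The spread bounds $\mathfrak t_\star+\mathfrak v_\star\lesssim n^{-1/2}$ themselves lose nothing.

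\emph{The gap.} The genuine gap is the lower half of Assumption~\ref{ass:sparse-fisher} in the unbounded-weight Poisson route. The rest of that route depends on it: your conversion $\|h\|_2\le r_n^0/\sqrt{nc_F}$ and your bounds on $q_\star$ and $d_n$ all use $c_F$. Truncating $X_i^\top\beta^0$ at $\pm C\sqrt{\log n}$ only gives $w_i^0\ge\e^{-C\sqrt{\log n}}\to0$. Moreover, the population hypothesis concerns only the unweighted $\overline\Sigma_n$. So after concentration you still need a fixed lower bound on $n^{-1}\sum_i\mathbb E_{\mX}[\e^{\eta_i^0}(u^\top X_i)^2]$ over sparse unit $u$, and this requires its own argument.

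\emph{The paper's fix.} The paper shows that small weights carry little sparse second moment, in three steps:
\begin{enumerate}[label=\textnormal{(\roman*)},leftmargin=*]
\item It fixes $L_0$ with $n^{-1}\sum_i\mathbb E_{\mX}[(u^\top X_i)^2\mathbf 1\{\eta_i^0<-L_0\}]\le c_X/2$ uniformly, using Cauchy--Schwarz, bounded fourth moments and $\|\eta_i^0\|_{\psi_2}\le KB$.
\item It concentrates the still sub-Gaussian rows $X_i\mathbf 1\{\eta_i^0\ge-L_0\}$ to get the empirical bound $c_X/4$.
\item It uses $\e^{\eta_i^0}\ge\e^{-L_0}$ on those rows.
\end{enumerate}
A reverse Cauchy--Schwarz bound through $\mathbb E_{\mX}\e^{-2\eta_i^0}$ and fourth moments would also give the population bound. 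Your growing truncation does handle the upper half, since the Bernstein constant $\e^{C\sqrt{\log n}}$ is $n^{o(1)}$. The paper instead decouples via $n^{-1}\sum_i\e^{\eta_i^0}(u^\top X_i)^2\le\{n^{-1}\sum_i\e^{2\eta_i^0}\}^{1/2}\{n^{-1}\sum_i(u^\top X_i)^4\}^{1/2}$.

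\emph{Your curvature argument.} Once the geometry is in place, your canonical Poisson curvature argument is correct and simpler than the paper's. Take $r_n^0=M_n\sqrt{s_n^\star\log p}$. The Hessian $-\nabla_S^2\ell_{n,S}(\beta_S^0+h)$ lies between $\e^{-\delta}\mF_S^0$ and $\e^{\delta}\mF_S^0$, where $\delta=\max_i|X_{i,S}^\top h|\lesssim M_ns_0\log p/\sqrt n$, so $\varepsilon_n\to0$. The same sandwich at $\widehat\beta_{S_0}$ gives an oracle modulus of order $q_\star(s_0)$, with $s_0q_\star(s_0)\lesssim(s_0^3\log p/n)^{1/2}\to0$. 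This avoids average spread. It gives up the sharper $\bar\kappa_n^0\lesssim n^{-1/2}$, which this proposition does not need.
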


Proposition~\ref{supp-prop:random-design-geometry} in the supplement proves \eqref{eq:simple-spread-moments} for the bounded-weight route, while Proposition~\ref{supp-prop:poisson-subgaussian-bridge} gives the Poisson result with unbounded Fisher weights. The separate Poisson route is needed because $(w_i^0)^{1/2}X_i$ need not be sub-Gaussian when the Fisher weights are unbounded. For fixed $s_0$, \eqref{eq:simple-random-growth} reduces to $(\log p)^2=o(n)$. The supplement retains the general rates without $s_0\le\log p$.

\begin{corollary}[Oracle theory for six GLMs]
\label{cor:primitive-six-models}
Fix $\alpha\in(0,1]$. Suppose the likelihood conditions are verified by Proposition~\ref{prop:simple-fixed-design} or Proposition~\ref{prop:simple-random-design}. Suppose Proposition~\ref{prop:slab-verification} applies to the chosen slab at the same likelihood radius, and Assumptions~\ref{ass:prior} and \ref{ass:signal} hold. Then Theorems~\ref{thm:selection}--\ref{thm:oracle-bvm} and Corollaries~\ref{cor:posterior-contraction}--\ref{cor:selected-coverage} hold. The frequentist-coverage conclusions remain subject to the sampling-calibration conditions in Corollary~\ref{cor:coverage}.
\end{corollary}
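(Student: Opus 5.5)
The corollary is an assembly result. I will check that the hypotheses of Theorems~\ref{thm:selection}--\ref{thm:oracle-bvm} hold, and then derive Corollaries~\ref{cor:posterior-contraction}--\ref{cor:selected-coverage} from them.

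\textbf{Step 1: likelihood assumptions.}
\begin{itemize}
\item In the fixed-design case, Proposition~\ref{prop:simple-fixed-design} supplies Assumptions~\ref{ass:score} and \ref{ass:truth-lan} and the likelihood part of Assumption~\ref{ass:oracle-laplace}. Assumption~\ref{ass:sparse-fisher} is one of its hypotheses.
\item In the random-design case, Proposition~\ref{prop:simple-random-design} gives design events $\mathcal X_n$ with $\mathbb P_{\mX}(\mathcal X_n)\to1$. On these events Assumptions~\ref{ass:sparse-fisher} and \ref{ass:score} hold deterministically, and the likelihood events of Assumptions~\ref{ass:truth-lan} and \ref{ass:oracle-laplace} have conditional probability $1-o(1)$ uniformly over $\mathcal X_n$.
\end{itemize}
The same sequences $r_n^0$, $r_n$, $\varepsilon_n$ and $\bar\kappa_n^0$ produced by these propositions are fed into Proposition~\ref{prop:slab-verification}. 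This is what ``at the same likelihood radius'' means: $\rho_n=r_n^0/(nc_F)^{1/2}$ is built from the $r_n^0$ just obtained. That proposition then delivers Assumption~\ref{ass:slab} and the slab-flatness half of Assumption~\ref{ass:oracle-laplace}. Assumptions~\ref{ass:prior} and \ref{ass:signal} are imposed directly. Under Assumption~\ref{ass:prior}, $\pi_p(s_0)>0$ follows by iterating the lower recursion from $\pi_p(0)$, which must be positive for the upper recursion and normalization to be consistent.

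\textbf{Step 2: fixed-design conclusions.} With all of Assumptions~\ref{ass:sparse-fisher}--\ref{ass:oracle-laplace} in hand, Theorem~\ref{thm:selection}, Proposition~\ref{prop:conditional-bvm}, Theorem~\ref{thm:oracle-bvm} and Corollaries~\ref{cor:posterior-contraction}--\ref{cor:selected-coverage} apply verbatim. The frequentist-coverage statements carry the Lyapunov and $\varepsilon_n\sqrt{s_0}$ qualifications exactly as stated in Corollary~\ref{cor:coverage}, so nothing new needs proving there.

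\textbf{Step 3: random-design transfer (main obstacle).} The earlier theorems are stated for fixed $\mX$ under $\mP_{\beta^0}$ conditional on $\mX$. Their proofs use the assumptions only through probability bounds. The key point is that every convergence in those proofs must hold uniformly over $\mX\in\mathcal X_n$, which requires two checks.
\begin{itemize}
\item The constants in Assumptions~\ref{ass:sparse-fisher}--\ref{ass:signal} (namely $c_F$, $C_F$, $d_n$, $\varepsilon_n$, $C_g$ and $C_\alpha$) do not depend on the particular design in $\mathcal X_n$. This is how Proposition~\ref{prop:simple-random-design} is stated.
\item The exceptional probabilities in the proofs are controlled by these deterministic sequences together with the uniform $1-o(1)$ likelihood-event bounds.
\end{itemize}
Given both checks, for any target event $E_n$ (for example $\Pi_{n,\alpha}(S_\beta=S_0\mid Y)<1-\epsilon$, or a total-variation error exceeding $\epsilon$) we have
\[
\mathbb P(E_n)\le\mathbb P_{\mX}(\mathcal X_n^c)+\sup_{\mX\in\mathcal X_n}\mP_{\beta^0}(E_n\mid\mX)\to0,
\]
so every conclusion holds both conditionally and unconditionally.

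I expect the careful part to be confirming that no step in the supplementary proofs silently depends on the design beyond these constants. The candidates are the $K_0$ choice, the fractional-moment bound over added coordinates, and the oracle-MLE existence event. I would audit each of these and note that they depend only on the fixed constants and on the stated rates.
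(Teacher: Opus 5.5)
Your proposal is correct and follows the paper's proof. The likelihood propositions supply Assumptions~\ref{ass:sparse-fisher}--\ref{ass:truth-lan} and the likelihood half of Assumption~\ref{ass:oracle-laplace}, Proposition~\ref{prop:slab-verification} at the same radius supplies Assumption~\ref{ass:slab} and oracle slab flatness, the prior and beta-min conditions are imposed, and the random-design case integrates the uniform conditional bounds over $\mathcal X_n$. You are more explicit than the paper in deriving $\pi_p(s_0)>0$ from Assumption~\ref{ass:prior} and in auditing uniformity over designs. For the coverage limits, which converge to a nonzero constant $c$, you need the two-sided analogue of your bound, with $\sup_{\mX\in\mathcal X_n}|\mP_{\beta^0}(E_n\mid\mX)-c|$ in place of the vanishing conditional probability.
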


\section{Discussion}

The theory is developed under correct specification, known variance or dispersion parameters, fixed known negative-binomial size, samplewise concavity and a nonempty true support separated from zero. Allowing for misspecification or unknown nuisance parameters would require calibration and joint posterior control \citep{kleijn2012misspecification,bickel2012semiparametric}. Weak signals and the null model call for inferential targets that do not rely on exact support recovery. The coverage conclusions are pointwise under the stated signal separation. Uniform inference near model boundaries remains outside the present theory \citep{leeb2008sparse}.

The selected-support conclusions concern the ideal posterior modal support. Each support probability is mathematically well defined, but its integral generally has no closed form and maximization over supports can be infeasible for large $p$ \citep{tang2024empirical}. Laplace approximations and stochastic search may provide computational surrogates, but posterior concentration does not ensure that they locate the dominant support or mix rapidly \citep{yang2016computational}. Error bounds sufficient to preserve the coverage conclusions remain to be established.

\section*{Acknowledgement}
The authors thank Ray Bai for helpful discussions and suggestions on the oracle properties of Bayesian sparse estimators. Lu gratefully acknowledges support from a Discovery Grant (RGPIN-2024-03951) awarded by the Natural Sciences and Engineering Research Council of Canada.

\section*{Supplementary material}
The Supplementary Material includes all proofs and technical results.

% A single bibliography serves the main text and supplementary proofs.
\bibliographystyle{abbrvnat}
\bibliography{references}

% One combined document: all proof-to-main references are internal.
\clearpage
\appendix
\setcounter{section}{0}
\setcounter{equation}{0}
\setcounter{table}{0}
\setcounter{figure}{0}
\setcounter{theorem}{0}
\setcounter{lemma}{0}
\setcounter{corollary}{0}
\setcounter{proposition}{0}
\setcounter{assumption}{0}
\setcounter{remark}{0}
\renewcommand{\thesection}{\Alph{section}}
\numberwithin{equation}{section}
\numberwithin{table}{section}
\renewcommand{\theequation}{\thesection.\arabic{equation}}
\renewcommand{\thelemma}{\thesection.\arabic{lemma}}
\renewcommand{\thecorollary}{\thesection.\arabic{corollary}}
\renewcommand{\thetheorem}{\thesection.\arabic{theorem}}
\renewcommand{\theproposition}{\thesection.\arabic{proposition}}
\renewcommand{\theassumption}{\thesection.\arabic{assumption}}
\renewcommand{\theremark}{\thesection.\arabic{remark}}
\makeatletter
\@addtoreset{theorem}{section}
\let\c@lemma\c@theorem
\let\c@corollary\c@theorem
\let\c@proposition\c@theorem
\let\c@assumption\c@theorem
\let\c@remark\c@theorem
\makeatother
% Distinct destinations prevent collisions with main-text numbering.
\renewcommand{\theHsection}{supp.\Alph{section}}
\renewcommand{\theHsubsection}{\theHsection.\arabic{subsection}}
\renewcommand{\theHequation}{\theHsection.\arabic{equation}}
\renewcommand{\theHtable}{\theHsection.\arabic{table}}
\renewcommand{\theHtheorem}{\theHsection.\arabic{theorem}}
\renewcommand{\theHlemma}{\theHsection.\arabic{theorem}}
\renewcommand{\theHcorollary}{\theHsection.\arabic{theorem}}
\renewcommand{\theHproposition}{\theHsection.\arabic{theorem}}
\renewcommand{\theHassumption}{\theHsection.\arabic{theorem}}
\renewcommand{\theHremark}{\theHsection.\arabic{theorem}}
\RenewDocumentCommand{\paragraph}{s m}{\par\smallskip\noindent\textit{#2.}\quad}

\pdfbookmark[0]{Supplementary material}{supplement}
\begin{center}
{\large\bfseries Supplementary material}\\[3pt]
{\itshape Bernstein--von Mises theorem for sparse generalized linear models}
\end{center}
\noindent\textbf{Contents}
\begin{list}{}{\setlength{\leftmargin}{0pt}\setlength{\itemsep}{1pt}
\setlength{\parsep}{0pt}\setlength{\topsep}{4pt}}
\item \hyperref[supp-sec:reader-guide]{\ref*{supp-sec:reader-guide}.\quad Roadmap and notation}\hfill\pageref{supp-sec:reader-guide}
\item \hyperref[supp-sec:selection-proof]{\ref*{supp-sec:selection-proof}.\quad Proof of selection consistency}\hfill\pageref{supp-sec:selection-proof}
\item \hyperref[supp-sec:oracle-proof]{\ref*{supp-sec:oracle-proof}.\quad Oracle approximation and inferential consequences}\hfill\pageref{supp-sec:oracle-proof}
\item \hyperref[supp-sec:model-verification]{\ref*{supp-sec:model-verification}.\quad Verification of assumptions}\hfill\pageref{supp-sec:model-verification}
\end{list}

\section{Roadmap and notation}
\label{supp-sec:reader-guide}

Section~\ref{supp-sec:selection-proof} proves selection consistency. Section~\ref{supp-sec:oracle-proof} establishes the conditional Gaussian approximation, combines it with selection to obtain the oracle BvM theorem and contraction, and proves the credible-set and coverage results. Section~\ref{supp-sec:model-verification} verifies the assumptions for the stated product slabs, models and fixed or random designs.

Unless displayed otherwise, constants denoted by $C$ or $c$ may depend on the fixed power $\alpha$, the model-size prior constants $(a_-,a_+,c_-,c_+,K_0)$, the slab family and scale, the fixed link and likelihood envelopes, and the Fisher, score and response-Bernstein constants. They never depend on $n,p$ or $s_0$. In particular, the beta-min constant $C_\alpha$ may depend on $a_-$ and these fixed quantities. For a geometric complexity prior it may therefore depend on its exponent $a$. No constant is asserted to be uniform in $\alpha$.

We use the powered marginal likelihood $\mathcal Z_{n,\alpha}(S)$ from Section~2 and the standardized score $Z_S$ from Remark~\ref{rem:marginal-likelihoods}. Write $\omega_p(S)=\pi_p(|S|)/\binom p{|S|}$, so posterior support probabilities are proportional to $\omega_p(S)\mathcal Z_{n,\alpha}(S)$. For the null support, vectors and matrices are zero-dimensional, while the empty determinant and Gaussian integral equal one.
For the proof, define
\[
\lambda_{g,n}=\max_{j\le p}[-\log g_j(\beta_j^0)]_+,
\qquad
b_{g,n}=\sup_{S\in\cS_n^+}\sup_{\|u\|_2\le r_n^0/\sqrt{nc_F}}
\left|\log\frac{g_S(\beta_S^0+u)}{g_S(\beta_S^0)}\right|.
\]
Assumption~\ref{ass:slab} gives $\lambda_{g,n}=O(\log p)$ and $b_{g,n}=O(1)$. Put
$\underline\kappa_n=1-\varepsilon_n$ and $\overline\kappa_n=1+\varepsilon_n$.
The retained radius condition gives $s_n^\star\log p=o((r_n^0)^2)$. No upper bound on $r_n^0/\sqrt n$ is used in the abstract proofs.

\begin{lemma}[Local prior consequences]
\label{supp-le:main-prior-consequences}
Under Assumptions~\ref{ass:sparse-fisher}, \ref{ass:truth-lan} and \ref{ass:slab}, \textnormal{(ii)}--\textnormal{(iii)} hold. If Assumption~\ref{ass:prior} also holds, \textnormal{(i)} holds.
\begin{enumerate}[label=\textnormal{(\roman*)},leftmargin=*]
\item For a fixed $c_{\mathrm{sb}}>0$,
\[
\omega_p(S_0)\int_{\{\|(\mF_{S_0}^0)^{1/2}(\beta_{S_0}-\beta_{S_0}^0)\|_2
\le c_{\mathrm{sb}}\sqrt{s_0\log p}\}}
g_{S_0}(\beta_{S_0})\dif\beta_{S_0}
\ge \e^{-C_\Pi s_0\log p}.
\]
\item For every underfitted $S$ with $|S|\le s_n^\dagger$, put $M=S_0\setminus S$ and $T=S\cup S_0$. Define $L_{n,S}=\exp(b_{g,n})\prod_{j\in M}g_j(\beta_j^0)^{-1}$. Then
\[
\sup_{\substack{u\in\mathbb R^{|S|}:\\(u,-\beta_M^0)\in\cB_T(r_n^0)}}
\frac{g_S(\beta_S^0+u)}{g_T(\beta_T^0)}\le L_{n,S}.
\]
\item For every fixed $c>0$ used below, uniformly over $S\in\cS_n^+$,
\[
\frac{\e^{-c(r_n^0)^2}}
{g_S(\beta_S^0)(2\pi)^{|S|/2}|\mF_S^0|^{-1/2}}=o(1).
\]
\end{enumerate}
\end{lemma}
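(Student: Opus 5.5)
The three parts are largely independent, and I will prove each directly from the assumptions, using only the Fisher-norm/Euclidean comparison from Assumption~\ref{ass:sparse-fisher}: for $S\in\cS_n^+$ and $h\in\cB_S(r)$, $\|h\|_2\le r/\sqrt{nc_F}$.

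\emph{Part (i).} Write $B=\{\|(\mF_{S_0}^0)^{1/2}(\beta_{S_0}-\beta_{S_0}^0)\|_2\le c_{\mathrm{sb}}\sqrt{s_0\log p}\}$. Since $c_{\mathrm{sb}}\sqrt{s_0\log p}\le r_n^0$ eventually, $B\subseteq\beta^0_{S_0}+\cB_{S_0}(r_n^0)$. The bounded-variation part of Assumption~\ref{ass:slab} then gives $g_{S_0}\ge \e^{-b_{g,n}}g_{S_0}(\beta^0_{S_0})\ge \e^{-O(1)}p^{-C_gs_0}$ on $B$.

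The Lebesgue volume of $B$ is $|\mF_{S_0}^0|^{-1/2}V_{s_0}(c_{\mathrm{sb}}\sqrt{s_0\log p})^{s_0}$, where $V_d$ is the unit-ball volume. We have $|\mF_{S_0}^0|^{-1/2}\ge (C_Fn)^{-s_0/2}$, and $V_d\,d^{d/2}\ge c^d$ by Stirling. Using $\log n=O(\log p)$, the logarithm of the volume is at least $-Cs_0\log p$.

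For the prior weight, iterating the lower recursion of Assumption~\ref{ass:prior} gives $\pi_p(s_0)\ge c_-^{s_0}p^{-a_-s_0}\pi_p(0)$. Also $\binom p{s_0}\le p^{s_0}$. The only delicate point is that $\pi_p(0)$ must be bounded below. The upper recursion gives $\pi_p(s)\le (c_+p^{-a_+})^s\pi_p(0)$, so summing over $s$ yields $1\le\pi_p(0)\sum_s(c_+p^{-a_+})^s\le 2\pi_p(0)$ for large $p$. Hence $\pi_p(0)\ge 1/2$. Combining the three factors gives $\e^{-C_\Pi s_0\log p}$.

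\emph{Part (ii).} Fix $u$ with $(u,-\beta^0_M)\in\cB_T(r_n^0)$ and define $v\in\mathbb R^{|T|}$ by $v_S=u$ and $v_M=-\beta^0_M$. Then $\|v\|_2\le r_n^0/\sqrt{nc_F}$ with $T\in\cS_n^+$. The point $\beta_T^0+v$ has $M$-coordinates equal to zero, so by the product form
\[
g_T(\beta_T^0+v)=g_S(\beta_S^0+u)\prod_{j\in M}g_j(0).
\]
This does not directly give the claimed bound, so I would instead isolate the $S$-block. Apply the bounded-variation bound with $T$ replaced by $S$, which is legitimate if $S\cup S_0\in\cS_n^+$ covers it. The intended reading of $b_{g,n}$ is presumably as a supremum over subvectors, equivalently over $S\subseteq T$, noting $\|u\|_2\le\|v\|_2$. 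If the assumption only covers supersets of $S_0$, apply it on $T$ with the vector $(u,0_M)$, whose norm is also at most $r_n^0/\sqrt{nc_F}$. Product form then gives $g_S(\beta_S^0+u)\prod_{j\in M}g_j(\beta^0_j)=g_T(\beta^0_T+(u,0))\le \e^{b_{g,n}}g_T(\beta_T^0)$. Dividing by $g_T(\beta_T^0)$ yields exactly $L_{n,S}$. I will use this second route, since it is valid as stated.

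\emph{Part (iii).} Bound the denominator from below. First, $g_S(\beta_S^0)\ge p^{-C_g|S|}$. Second, $|\mF_S^0|^{-1/2}\ge (C_Fn)^{-|S|/2}\ge p^{-C|S|}$, using $\log n=O(\log p)$. Third, $(2\pi)^{|S|/2}\ge1$. The denominator is therefore at least $\e^{-C's_n^\star\log p}$, so the ratio is at most $\exp\{-c(r_n^0)^2+C's_n^\star\log p\}\to0$ because $s_n^\star\log p=o((r_n^0)^2)$.

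I expect the main obstacle to be the lower bound on $\pi_p(0)$ in part (i) together with the Stirling volume estimate; both are routine once made explicit.
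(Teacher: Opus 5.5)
Your proposal is correct and matches the paper's proof step for step. In (i) you use the same lower bound $\pi_p(0)\ge 1/2$ from the upper recursion, together with bounded slab variation on the $S_0$ Fisher ball and the determinant and Stirling volume estimate. In (ii) you apply bounded variation on $T$ to $(u,0_M)$ with product coherence, and in (iii) you bound the denominator polynomially and use $s_n^\star\log p=o((r_n^0)^2)$. Your tentative first route in (ii), applying bounded variation on $S$ itself, is not available for underfitted $S\notin\cS_n^+$, so you were right to drop it for the $(u,0_M)$ argument.
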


\begin{proof}
For \textnormal{(i)}, the model-size recursions in Assumption~\ref{ass:prior} give $\pi_p(0)\ge1/2$ eventually and
$\pi_p(s_0)\ge\tfrac12c_-^{s_0}p^{-a_-s_0}$.
After the change of variables $h=(\mF_{S_0}^0)^{1/2}(\beta_{S_0}-\beta_{S_0}^0)$, bounded slab variation bounds the integral below by the truth density times the Fisher determinant factor and the Euclidean-ball volume.
Uniform support weights, $\lambda_{g,n}=O(\log p)$, the Fisher bounds and Stirling's formula give a logarithmic lower bound $-Cs_0\log p$ for this product, proving \textnormal{(i)}.
For \textnormal{(ii)}, sparse Fisher geometry gives $\|u\|_2\le r_n^0/\sqrt{nc_F}$. Apply bounded slab variation on $T$ to $(u,0_M)$ and use product coherence to obtain the stated bound.
For \textnormal{(iii)}, the Fisher and truth-density bounds in Assumptions~\ref{ass:sparse-fisher} and \ref{ass:slab} give a lower bound $\exp(-Cs_n^\star\log p)$ for the denominator.
Since $s_n^\star\log p=o((r_n^0)^2)$, \textnormal{(iii)} follows.
\end{proof}

For any partition $T=S\dot\cup M$ with $T\in\cS_n^+$, partition $\mF_T^0$ and $\Delta_T$ accordingly and define the Schur complement by $\mK_{M\mid S}=\mF_M^0-\mF_{MS}^0(\mF_S^0)^{-1}\mF_{SM}^0$, projected score by $\Delta_{M\mid S}=\Delta_M-\mF_{MS}^0(\mF_S^0)^{-1}\Delta_S$, and normalized projected score by $Z_{M\mid S}=\mK_{M\mid S}^{-1/2}\Delta_{M\mid S}$. For an underfitted $S$ with $|S|\le s_n^\dagger$, take $M=S_0\setminus S$ and $T=S\cup S_0$, and put $d_S^2=(\beta_M^0)^\top\mK_{M\mid S}\beta_M^0$. The affine comparison factor is
\begin{align}
\mathfrak A_{n,\alpha}(S)
&=\frac{\omega_p(S)}{\omega_p(T)}L_{n,S}
\left(\frac{\alpha}{2\pi}\right)^{|M|/2}
|\mK_{M\mid S}|^{1/2}.
\label{supp-eq:rec-A}
\end{align}
For $d\ge0$, put $\psiEff(d)=d^2\wedge(d/d_n)$, with $d/d_n=+\infty$ when $d_n=0$.

\section{Proof of selection consistency}
\label{supp-sec:selection-proof}

\subsection{Uniform likelihood toolkit}
\label{supp-sec:likelihood-toolkit}

\begin{lemma}[Uniform score bounds]\label{supp-le:rec-score}
Under Assumptions~\ref{ass:sparse-fisher}--\ref{ass:truth-lan}, there are $K_{\mathrm{sc}},K_{\mathrm{sc}}^0<\infty$ such that, with $\mP_{\beta^0}$-probability tending to one,
\begin{align}
\sup_{T\in\cS_n^+}\|Z_T\|_2
&\le K_{\mathrm{sc}}\sqrt{s_n^\star\log p},
\label{supp-eq:rec-uniform-score}\\
\|Z_{S_0}\|_2^2&\le K_{\mathrm{sc}}^0(s_0+\log p).
\label{supp-eq:rec-truth-score}
\end{align}
In particular, the left side of \eqref{supp-eq:rec-uniform-score} is at most $r_n^0/4$ with $\mP_{\beta^0}$-probability tending to one.
\end{lemma}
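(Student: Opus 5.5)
The plan is to use a net argument in Fisher coordinates combined with the Bernstein tail from Assumption~\ref{ass:score}, followed by a union bound over supports and net points. Fix $T\in\cS_n^+$ and write $\|Z_T\|_2=\sup_{u\in\mathbb S^{|T|-1}}u^\top Z_T$. Let $\mathcal N_T$ be a $1/2$-net of the sphere $\mathbb S^{|T|-1}$ with $|\mathcal N_T|\le 5^{|T|}$. Then $\|Z_T\|_2\le 2\max_{u\in\mathcal N_T}u^\top Z_T$. For each fixed unit $u$, the moment generating function bound in Assumption~\ref{ass:score} is the standard sub-gamma condition with variance factor $1$ and scale $d_n$. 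The Chernoff bound therefore gives
\[
\mP_{\beta^0}\bigl(u^\top Z_T>\sqrt{2x}+d_nx\bigr)\le \e^{-x}
\]
for every $x>0$.

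For \eqref{supp-eq:rec-uniform-score}, count the supports. Since $|\cS_n^+|\le\sum_{k\le s_n^\dagger}\binom{p}{k}\le \exp\{C s_n^\dagger\log p\}$, the total number of pairs $(T,u)$ is at most $\exp\{C' s_n^\star\log p\}$. Choose $x=(C'+1)s_n^\star\log p$. The union bound then gives failure probability at most $\exp(-s_n^\star\log p)\to0$. On the complement, $\|Z_T\|_2\le 2\{\sqrt{2x}+d_nx\}$. Because $d_n\sqrt{s_n^\star\log p}=o(1)$, the term $d_nx$ equals $o(\sqrt{s_n^\star\log p})$, so the bound $K_{\mathrm{sc}}\sqrt{s_n^\star\log p}$ holds. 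The remark in the statement that this is at most $r_n^0/4$ follows directly from $r_n^0/\sqrt{s_n^\star\log p}\to\infty$ in Assumption~\ref{ass:truth-lan}.

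For \eqref{supp-eq:rec-truth-score} only the single support $S_0$ is involved, so no support-counting cost arises. Use the $5^{s_0}$-point net and take $x=s_0\log 5+\log p$. The failure probability is then at most $1/p\to0$. On the good event, $\|Z_{S_0}\|_2\le 2\{\sqrt{2x}+d_nx\}$. Here $d_nx\le d_n(s_0+\log p)\le d_n\sqrt{s_n^\star\log p}\cdot\sqrt{s_0+\log p}\cdot C$, using $s_0+\log p\le C\sqrt{(s_n^\star\log p)(s_0+\log p)}$. This holds because $s_0+\log p\le s_n^\star\log p$ once $\log p\ge1$. The $d_n$ term is therefore $o(\sqrt{s_0+\log p})$, and squaring gives $\|Z_{S_0}\|_2^2\le K_{\mathrm{sc}}^0(s_0+\log p)$.

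The only delicate point is the linear Bernstein term $d_nx$, because we must make sure it stays dominated by the sub-Gaussian part at the chosen deviation level. The rate $d_n\sqrt{s_n^\star\log p}=o(1)$ in Assumption~\ref{ass:score} is calibrated exactly for this. Assumption~\ref{ass:truth-lan} enters only through the radius comparison, and Assumption~\ref{ass:sparse-fisher} enters only to make $(\mF_T^0)^{-1/2}$ well defined on $\cS_n^+$.
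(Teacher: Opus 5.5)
Your proof is correct and follows essentially the same route as the paper. Both arguments use a $1/2$-net with at most $5^{|T|}$ points, a Chernoff bound from Assumption~\ref{ass:score}, a union bound over $\exp\{O(s_n^\star\log p)\}$ support--net pairs, and a separate single-support bound at level $s_0+\log p$ for $S_0$; you use the equivalent sub-gamma form $\sqrt{2x}+d_nx$ in place of the paper's tail $2\exp\{-t^2/(2(1+d_nt))\}$, and the only slip is cosmetic, since with $x=s_0\log 5+\log p$ you should write $d_nx\le 2d_n(s_0+\log p)$, which changes nothing.
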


\begin{proof}
Assumption~\ref{ass:score} and Chernoff's inequality, with the admissible choice $\lambda=t/(1+d_nt)$, give, for every unit $u$ and $t>0$,
\begin{equation}
\mP_{\beta^0}\{|u^\top Z_T|>t\}
\le2\exp\left\{-\frac{t^2}{2(1+d_nt)}\right\}.
\label{supp-eq:rec-score-tail}
\end{equation}
A $1/2$-net of $\mathbb S^{|T|-1}$ has at most $5^{|T|}$ points and controls $\|Z_T\|_2$ by twice its largest absolute projection. The total number of net points over $\cS_n^+$ is $\exp\{O(s_n^\star\log p)\}$. Since $d_n\sqrt{s_n^\star\log p}=o(1)$, a union bound proves \eqref{supp-eq:rec-uniform-score}. The same argument on $S_0$ alone, with threshold $C\sqrt{s_0+\log p}$, proves \eqref{supp-eq:rec-truth-score} with $K_{\mathrm{sc}}^0$ independent of $K_0$. The bound by $r_n^0/4$ follows from $r_n^0/\sqrt{s_n^\star\log p}\to\infty$.

\end{proof}

\begin{lemma}[Projected-score moments]\label{supp-le:projected-moments}
Under Assumptions~\ref{ass:sparse-fisher} and \ref{ass:score}, every unit projection of $Z_{M\mid S}$, for $T=S\dot\cup M\in\cS_n^+$, satisfies the Bernstein bound in Assumption~\ref{ass:score}.
For any fixed $t<1/2$ with $t>0$, and $T_n=C\sqrt{s_n^\star\log p}$ with fixed $C<\infty$,
\begin{equation}
\mE_{\beta^0}\left[
\e^{t\|Z_{M\mid S}\|_2^2}
\mathbf 1_{\{\|Z_{M\mid S}\|_2\le T_n\}}
\right]\le C_t^{|M|},
\label{supp-eq:projected-moment}
\end{equation}
uniformly over these partitions, for all large $n$.
\end{lemma}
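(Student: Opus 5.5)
The plan has two parts: reduce the projected score to a Fisher-standardized score on $T$, and then integrate the resulting tail bound.

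\textbf{Bernstein bound for projections.} Write $\mF_T^0$ in block form with blocks $S,M$ and set
\[
A=\mK_{M\mid S}^{-1/2}\bigl[-\mF_{MS}^0(\mF_S^0)^{-1},\ \mI_{|M|}\bigr],
\]
so that $Z_{M\mid S}=A\Delta_T$. A direct block computation gives $A\mF_T^0A^\top=\mK_{M\mid S}^{-1/2}\mK_{M\mid S}\mK_{M\mid S}^{-1/2}=\mI_{|M|}$. Fix a unit vector $u\in\mathbb R^{|M|}$ and put $v=(\mF_T^0)^{1/2}A^\top u$. Then $\|v\|_2=1$ and
\[
u^\top Z_{M\mid S}=v^\top(\mF_T^0)^{-1/2}\Delta_T .
\]
Assumption~\ref{ass:score}, applied on $T\in\cS_n^+$ with direction $v$, therefore gives the same Bernstein moment-generating-function bound for every unit projection of $Z_{M\mid S}$. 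Positivity of the Schur complement follows from Assumption~\ref{ass:sparse-fisher}, so $A$ is well defined.

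\textbf{Truncated exponential moment.} Write $Z=Z_{M\mid S}$ and $m=|M|$. The argument of Lemma~\ref{supp-le:rec-score} gives the one-dimensional tail $\mP(|u^\top Z|>t)\le2\exp\{-t^2/(2(1+d_nt))\}$. A $1/2$-net of $\mathbb S^{m-1}$ with at most $5^m$ points then yields
\[
\mP(\|Z\|_2>x)\le 2\cdot5^m\exp\Bigl\{-\frac{x^2}{8(1+d_nx)}\Bigr\}.
\]
This constant $1/8$ is too lossy when $t$ is close to $1/2$. I will therefore use a finer $\epsilon$-net, with $\epsilon$ depending on $t$, which controls $\|Z\|_2\le(1-\epsilon)^{-1}\max_{\rm net}|u^\top Z|$. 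This gives
\[
\mP(\|Z\|_2>x)\le 2(3/\epsilon)^m\exp\Bigl\{-\frac{(1-\epsilon)^2x^2}{2(1+d_nx)}\Bigr\}.
\]
On the truncation range $x\le T_n$ we have $d_nx\le Cd_n\sqrt{s_n^\star\log p}=o(1)$, so the denominator is $2(1+o(1))$. Choose $\epsilon$ small enough that $t'=(1-\epsilon)^2/(2(1+o(1)))>t$ for all large $n$; this is possible because $t<1/2$ is fixed.

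Now apply the layer-cake formula:
\[
\mE\bigl[\e^{t\|Z\|^2}\mathbf 1_{\{\|Z\|\le T_n\}}\bigr]\le 1+\int_0^{T_n}2tx\,\e^{tx^2}\,\mP(\|Z\|_2>x)\dif x .
\]
The integrand is at most $2(3/\epsilon)^m\,2tx\,\e^{-(t'-t)x^2}$, which integrates to a constant times $(3/\epsilon)^m$. Hence the expectation is bounded by $C_t^m$ with $C_t=C(3/\epsilon)$, using $m\ge1$; for $m=0$ the expectation equals one. All constants are uniform over partitions because the tail bound is.

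\textbf{Main obstacle.} The delicate step is handling $t$ near $1/2$. The Bernstein correction term and the net-approximation loss must both be absorbed without forcing the exponent below $t$. This is resolved by making the net resolution depend on $t$, and by truncating at $T_n$, which makes the Bernstein term $d_nx$ negligible. Without the truncation, the linear-tail regime $x\gtrsim1/d_n$ would make the moment infinite, and this is exactly why the indicator appears in \eqref{supp-eq:projected-moment}.
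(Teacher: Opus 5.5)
Your proposal is correct and is essentially the paper's proof. The same Schur-complement map (your $A$ is $\mK_{M\mid S}^{-1/2}L^\top$ in the paper's notation) gives $u^\top Z_{M\mid S}=v^\top Z_T$ with $\|v\|_2=1$; then a fixed $t$-dependent net, the bound $d_nT_n=o(1)$ on the truncation range, and integration of the tail against $2tx\,\e^{tx^2}$ over $[0,T_n]$ give the claim. The only inaccuracy is your closing aside: without truncation the Bernstein bound merely fails to control the moment, and the moment need not be infinite (for bounded Bernoulli scores it is finite).
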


\begin{proof}
For a unit $u\in\mathbb R^{|M|}$, let
\[
L=\begin{pmatrix}-(\mF_S^0)^{-1}\mF_{SM}^0\\ \mI_{|M|}\end{pmatrix},
\qquad
v=(\mF_T^0)^{1/2}L\mK_{M\mid S}^{-1/2}u.
\]
The identities $L^\top\Delta_T=\Delta_{M\mid S}$ and $L^\top\mF_T^0L=\mK_{M\mid S}$ give
\begin{equation}
u^\top Z_{M\mid S}=v^\top Z_T,\qquad \|v\|_2=1.
\label{supp-eq:rec-schur-score}
\end{equation}
Thus Assumption~\ref{ass:score} and its one-sided and two-sided Chernoff bounds apply to the projected score.
Write $r=|M|\ge1$. A $\delta$-net of its unit sphere gives, for $0\le x\le T_n$,
\[
\mP_{\beta^0}(\|Z_{M\mid S}\|_2>x)
\le2(1+2/\delta)^r
\exp\left\{-\frac{(1-\delta)^2x^2}{2(1+d_nx)}\right\}.
\]
Choose fixed $\delta>0$ sufficiently small. Since $d_nT_n=o(1)$, the coefficient of $x^2$ in this exponent exceeds $t$ by a fixed positive amount for all large $n$. Integrating the bound against $2tx\e^{tx^2}$ on $[0,T_n]$ proves \eqref{supp-eq:projected-moment}, after increasing $C_t$. The case $r=0$ is immediate.
\end{proof}

Let $\mathcal G_n$ be the intersection of the uniform-score and true-score events in Lemma~\ref{supp-le:rec-score} and the information event in Assumption~\ref{ass:truth-lan}. Then $\mP_{\beta^0}(\mathcal G_n)\to1$. On this event,
$\|Z_{B\mid S_0}\|_2\le\|Z_{S_0\cup B}\|_2\le T_n$
with $T_n=K_{\mathrm{sc}}\sqrt{s_n^\star\log p}$.
For $B\subseteq S_0^c$ with $T=S_0\cup B\in\cS_n^+$, put
\begin{align}
\mathfrak E_{n,\alpha}
&=\left(\frac{\overline\kappa_n}{\underline\kappa_n}\right)^{s_0/2}
\exp\left\{\frac{\alpha K_{\mathrm{sc}}^0\varepsilon_n(s_0+\log p)}
{\underline\kappa_n\overline\kappa_n}\right\},\label{supp-eq:rec-E}\\
\mathfrak D_{n,\alpha}(B)
&=\frac{\omega_p(T)}{\omega_p(S_0)}g_B(0)
\left(\frac{2\pi}{\alpha\underline\kappa_n}\right)^{|B|/2}
|\mK_{B\mid S_0}|^{-1/2},\label{supp-eq:addition-weight}\\
\mathfrak C_{n,\alpha}(T)
&=\mathfrak E_{n,\alpha}\mathfrak D_{n,\alpha}(B)
\exp\left\{\frac{\alpha}{2\underline\kappa_n}
\|Z_{B\mid S_0}\|_2^2\right\}.
\label{supp-eq:rec-C}
\end{align}
Here $\mathfrak D_{n,\alpha}(\varnothing)=1$ and $\mathfrak C_{n,\alpha}(S_0)=\mathfrak E_{n,\alpha}$.
The factor $\mathfrak D_{n,\alpha}(B)$ is deterministic, whereas $\mathfrak C_{n,\alpha}(T)$ retains the random added-coordinate score. Define
\begin{equation}
\mathfrak O_{n,\alpha}=\sum_{\substack{U\supsetneq S_0\\|U|\le s_n^\star}}
\mathfrak C_{n,\alpha}(U),\qquad \eta=3/4.
\label{supp-eq:overfitting-sum}
\end{equation}

\begin{lemma}[Likelihood localization]\label{supp-le:rec-lan}
Under Assumptions~\ref{ass:sparse-fisher}--\ref{ass:truth-lan}, on the intersection of the uniform-score event in Lemma~\ref{supp-le:rec-score} and the information event in Assumption~\ref{ass:truth-lan}, uniformly over $T\in\cS_n^+$ and $\delta_T\in\cB_T(r_n^0)$,
\begin{align}
\Delta_T^{\top}\delta_T-\frac{\overline\kappa_n}{2}
\delta_T^{\top}\mF_T^0\delta_T
&\le
\ell_{n,T}(\beta_T^0+\delta_T)-\ell_n(\beta^0)\notag\\
&\le
\Delta_T^{\top}\delta_T-\frac{\underline\kappa_n}{2}
\delta_T^{\top}\mF_T^0\delta_T.
\label{supp-eq:rec-sandwich}
\end{align}
Moreover, there are fixed $c_{\mathrm{rad}},c_1>0$ such that, if $r=\|(\mF_T^0)^{1/2}\delta_T\|_2>r_n^0$, then
\begin{equation}
\ell_{n,T}(\beta_T^0+\delta_T)-\ell_n(\beta^0)
\le-c_{\mathrm{rad}}(r_n^0)^2-c_1r_n^0(r-r_n^0).
\label{supp-eq:rec-radial}
\end{equation}
Consequently, for every fixed $\alpha\in(0,1]$, any support density $g_S$ and any affine subset of a truth-containing support,
\begin{equation}
\int_{r>r_n^0}\e^{\alpha\ell_n(\beta^0+\delta)}g_S(\beta_S)\dif\beta_S
\le \e^{\alpha\ell_n(\beta^0)-\alpha c_{\mathrm{rad}}(r_n^0)^2}.
\label{supp-eq:rec-outside}
\end{equation}
\end{lemma}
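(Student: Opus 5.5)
The sandwich \eqref{supp-eq:rec-sandwich} comes from an exact second-order Taylor expansion with integral remainder along the segment from $\beta_T^0$ to $\beta_T^0+\delta_T$. Since $\cB_T(r_n^0)$ is convex and contains zero, every point $\beta_T^0+t\delta_T$ with $t\in[0,1]$ lies in the neighbourhood where the information event of Assumption~\ref{ass:truth-lan} applies. Note also that $\ell_{n,T}(\beta_T^0)=\ell_n(\beta^0)$, because $\beta^0$ is supported in $S_0\subseteq T$. Writing
\[
\ell_{n,T}(\beta_T^0+\delta_T)-\ell_n(\beta^0)=\Delta_T^\top\delta_T-\int_0^1(1-t)\,\delta_T^\top\{-\nabla_T^2\ell_{n,T}(\beta_T^0+t\delta_T)\}\delta_T\dif t,
\]
the relative operator bound gives, at every such point,
\[
\underline\kappa_n\,\delta_T^\top\mF_T^0\delta_T\le\delta_T^\top\{-\nabla_T^2\ell_{n,T}\}\delta_T\le\overline\kappa_n\,\delta_T^\top\mF_T^0\delta_T.
\]
Since $\int_0^1(1-t)\dif t=1/2$, both inequalities in \eqref{supp-eq:rec-sandwich} follow, uniformly in $T$ and $\delta_T$, on the stated event.

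For the radial bound \eqref{supp-eq:rec-radial}, I will use samplewise concavity of $\ell_{n,T}$. Put $u=\delta_T/r$, with $r>r_n^0$, and consider $\phi(s)=\ell_{n,T}(\beta_T^0+s u)-\ell_n(\beta^0)$, which is concave on $[0,\infty)$ with $\phi(0)=0$. On $[0,r_n^0]$ the upper sandwich gives
\[
\phi(s)\le s\,u^\top\Delta_T-\tfrac{\underline\kappa_n}{2}s^2.
\]
By Lemma~\ref{supp-le:rec-score}, $|u^\top\Delta_T|\le\|(\mF_T^0)^{-1/2}\Delta_T\|_2=\|Z_T\|_2\le r_n^0/4$, since $\|(\mF_T^0)^{1/2}u\|_2=1$. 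Hence
\[
\phi(r_n^0)\le (r_n^0)^2\{1/4-\underline\kappa_n/2\}\le-(r_n^0)^2/5
\]
for large $n$. Concavity gives $\phi(s)\le\phi(r_n^0)+\phi'_-(r_n^0)(s-r_n^0)$ for $s\ge r_n^0$, so I need a negative bound on the slope at $r_n^0$. Concavity again bounds this slope by the chord slope from $r_n^0/2$ to $r_n^0$. Using the upper sandwich at $r_n^0$ and the lower sandwich at $r_n^0/2$ gives
\[
\phi(r_n^0)-\phi(r_n^0/2)\le \tfrac{r_n^0}{2}\,u^\top\Delta_T-\tfrac{(r_n^0)^2}{2}\big(\underline\kappa_n-\tfrac{\overline\kappa_n}{4}\big)\le (r_n^0)^2\big(\tfrac18-\tfrac38+o(1)\big),
\]
so the chord slope is at most $-c_1r_n^0$ with, e.g., $c_1=1/4$. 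This yields \eqref{supp-eq:rec-radial} with $c_{\mathrm{rad}}=1/5$. The only delicate point is this slope step, which is what the chord argument handles; the rest is bookkeeping of constants.

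For \eqref{supp-eq:rec-outside}, I exponentiate \eqref{supp-eq:rec-radial} with factor $\alpha$. On $\{r>r_n^0\}$ the integrand is at most
\[
\e^{\alpha\ell_n(\beta^0)-\alpha c_{\mathrm{rad}}(r_n^0)^2}\,g_S(\beta_S),
\]
because the extra term $-\alpha c_1r_n^0(r-r_n^0)$ is nonpositive. Integrating against the probability density $g_S$ gives total mass at most one. This works for any support density on a truth-containing support, and equally on an affine subset, where $g_S$ is a density in the free coordinates and \eqref{supp-eq:rec-radial} still applies pointwise. Thus the bound holds without any use of the linear term.
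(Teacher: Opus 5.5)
Your proposal is correct and follows the paper's route: Taylor's formula with integral remainder gives the sandwich, concavity along rays gives the radial bound, and you exponentiate and integrate against the probability density for \eqref{supp-eq:rec-outside}. The slope step you flag as delicate is simpler than your chord from $r_n^0/2$ to $r_n^0$. Secant slopes of a concave function are nonincreasing, so the slope over $[r_n^0,r]$ is already at most the chord slope over $[0,r_n^0]$, namely $\phi(r_n^0)/r_n^0\le -r_n^0/5$ by the bound you derived from the upper sandwich, and this is how the paper argues, without the lower sandwich at $r_n^0/2$.
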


\begin{proof}
Taylor's formula with integral remainder and Assumption~\ref{ass:truth-lan} give \eqref{supp-eq:rec-sandwich} under the stated relative-information error $\varepsilon_n=o(1)$. For the radial statement, fix an $\mF_T^0$-unit vector $v$ and set $g_v(t)=\ell_{n,T}(\beta_T^0+tv)$. At $t=r_n^0$, the upper quadratic sandwich and the uniform-score event give
\[
g_v(r_n^0)-g_v(0)
\le r_n^0 g_v'(0)-\frac{\underline\kappa_n}{2}(r_n^0)^2
\le-\frac18(r_n^0)^2
\]
eventually. For a concave function, successive secant slopes are nonincreasing. Thus, for $t>r_n^0$,
\[
\frac{g_v(t)-g_v(r_n^0)}{t-r_n^0}
\le\frac{g_v(r_n^0)-g_v(0)}{r_n^0}
\le-\frac18r_n^0,
\]
which gives \eqref{supp-eq:rec-radial} with fixed positive $c_{\mathrm{rad}},c_1$. Exponentiating and integrating against a probability density proves \eqref{supp-eq:rec-outside}.
\end{proof}

\begin{lemma}[Oracle maximum-likelihood localization]
\label{supp-le:oracle-mle}
Under Assumptions~\ref{ass:sparse-fisher}--\ref{ass:truth-lan}, the oracle maximum-likelihood estimator exists uniquely with probability tending to one and satisfies
\[
\| (\mF_{S_0}^0)^{1/2}(\widehat\beta_{S_0}-\beta_{S_0}^0)\|_2
=O_{\mP_{\beta^0}}(\sqrt{s_0}).
\]
Moreover, $\widehat J_{S_0}\succ0$ on an event of probability tending to one. Define
\[
\begin{aligned}
A_n&=(\mF_{S_0}^0)^{-1/2}
\left[\int_0^1-\nabla_{S_0}^2\ell_{n,S_0}
\{\beta_{S_0}^0+t(\widehat\beta_{S_0}-\beta_{S_0}^0)\}\dif t\right]
(\mF_{S_0}^0)^{-1/2},\\
B_n&=(\mF_{S_0}^0)^{-1/2}\widehat J_{S_0}(\mF_{S_0}^0)^{-1/2}.
\end{aligned}
\]
Then, with probability tending to one,
\begin{equation}
(\mF_{S_0}^0)^{1/2}(\widehat\beta_{S_0}-\beta_{S_0}^0)=A_n^{-1}Z_{S_0},
\qquad
\|A_n-\mI_{s_0}\|_{\rm op}\vee\|B_n-\mI_{s_0}\|_{\rm op}
\le\varepsilon_n.
\label{supp-eq:rec-mle-score-linearization}
\end{equation}
\end{lemma}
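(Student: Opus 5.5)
We work on the event $\mathcal G_n$, intersected with the information event of Assumption~\ref{ass:truth-lan}. This event has probability tending to one. The plan has three steps: show that the oracle log likelihood has a maximizer inside a Fisher ball of radius of order $\sqrt{s_0}$, obtain uniqueness from concavity and positive curvature, and then derive the linearization identity from the first-order condition.

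\emph{Existence and localization.} Apply the sandwich \eqref{supp-eq:rec-sandwich} with $T=S_0$ in the coordinates $h=(\mF_{S_0}^0)^{1/2}\delta$. This gives
\[
\ell_{n,S_0}(\beta^0_{S_0}+\delta)-\ell_n(\beta^0)\le \|Z_{S_0}\|_2\|h\|_2-\tfrac{\underline\kappa_n}{2}\|h\|_2^2
\]
for $\|h\|_2\le r_n^0$. Take the radius $R_n=4\|Z_{S_0}\|_2/\underline\kappa_n$. By \eqref{supp-eq:rec-truth-score} we have $R_n\lesssim\sqrt{s_0+\log p}$, which is $o(r_n^0)$. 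On the sphere $\|h\|_2=R_n$ the log-likelihood increment is therefore strictly negative. At $h=0$ it equals zero.

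By continuity, the maximum over the closed ball is attained at an interior point. Concavity of $\ell_{n,S_0}$ then makes this interior point a global maximizer.

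\emph{Uniqueness.} On $\cB_{S_0}(r_n^0)$ the Hessian satisfies $-\nabla^2\ell_{n,S_0}\succeq(1-\varepsilon_n)\mF^0_{S_0}\succ0$, by Assumption~\ref{ass:truth-lan}. So $\ell_{n,S_0}$ is strictly concave on that ball. Any other maximizer would lie on the segment joining it to the first one, since the set of maximizers of a concave function is convex. That segment lies partly inside the ball, which contradicts strict concavity there. The same Hessian bound evaluated at $\widehat\beta_{S_0}$ gives $\|B_n-\mI_{s_0}\|_{\rm op}\le\varepsilon_n$, and hence $\widehat J_{S_0}\succ0$.

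\emph{Linearization and the sharp rate.} The first-order condition $\nabla\ell_{n,S_0}(\widehat\beta_{S_0})=0$ combined with the integral form of Taylor's theorem gives
\[
\Delta_{S_0}=\Bigl[\int_0^1-\nabla^2\ell_{n,S_0}\{\beta^0_{S_0}+t(\widehat\beta_{S_0}-\beta^0_{S_0})\}\dif t\Bigr](\widehat\beta_{S_0}-\beta^0_{S_0}).
\]
Standardizing by $(\mF_{S_0}^0)^{-1/2}$ yields $Z_{S_0}=A_n(\mF_{S_0}^0)^{1/2}(\widehat\beta_{S_0}-\beta^0_{S_0})$. The whole segment lies in $\cB_{S_0}(r_n^0)$, so $\|A_n-\mI\|_{\rm op}\le\varepsilon_n$ by averaging the operator bound along the segment. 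Hence $A_n$ is invertible and \eqref{supp-eq:rec-mle-score-linearization} holds.

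This gives $\|(\mF_{S_0}^0)^{1/2}(\widehat\beta_{S_0}-\beta^0_{S_0})\|_2\le\|Z_{S_0}\|_2/(1-\varepsilon_n)$. The crude bound $s_0+\log p$ from \eqref{supp-eq:rec-truth-score} is not enough for the stated $O_{\mP}(\sqrt{s_0})$ rate. Instead we use $\mE_{\beta^0}\|Z_{S_0}\|_2^2=s_0$, which holds because $\operatorname{Var}(\Delta_{S_0})=\mF^0_{S_0}$ under correct specification. Markov's inequality then gives $\|Z_{S_0}\|_2=O_{\mP}(\sqrt{s_0})$. If one prefers to avoid the variance identity, the same conclusion follows by summing the coordinatewise sub-exponential second moments implied by Assumption~\ref{ass:score}, which are bounded by a constant times $s_0$.

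The main obstacle is the uniqueness step. Global concavity alone permits flat directions far from $\beta^0$. The argument above handles this by using the fact that the set of maximizers is convex and that strict concavity holds locally on the ball containing the first maximizer. All remaining steps are routine consequences of Lemma~\ref{supp-le:rec-lan} and Lemma~\ref{supp-le:rec-score}.
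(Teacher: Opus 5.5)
Your argument is correct and is essentially the paper's proof. Existence comes from negativity of the upper quadratic sandwich on the boundary of a Fisher ball inside the information region, together with samplewise concavity. Uniqueness comes from convexity of the maximizer set and strict concavity near the first maximizer. The linearization identity comes from integrating the Hessian along the segment, and the rate from $\mE_{\beta^0}\|Z_{S_0}\|_2^2=s_0$ with $\|A_n^{-1}\|_{\rm op}\le(1-\varepsilon_n)^{-1}$.

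The only difference is the localization radius. You use the random radius $4\|Z_{S_0}\|_2/\underline\kappa_n$, justified by the true-score event, where the paper uses the fixed radius $r_n^0/2$, justified by Markov's inequality. Your choice needs a one-line remark for the degenerate case $Z_{S_0}=0$: there the ball collapses to a point, but $\beta^0_{S_0}$ is then itself a critical point and hence the maximizer.
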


\begin{proof}
On the information event in Assumption~\ref{ass:truth-lan}, the likelihood is strictly concave throughout $\beta_{S_0}^0+\cB_{S_0}(r_n^0)$. The score has $\mE_{\beta^0}\|Z_{S_0}\|_2^2=s_0$, whereas $r_n^0/\sqrt{s_0}\to\infty$. Thus, with probability tending to one, the upper quadratic bound in \eqref{supp-eq:rec-sandwich} is strictly negative relative to $\ell_n(\beta^0)$ on the boundary of $\beta_{S_0}^0+\cB_{S_0}(r_n^0/2)$. A maximizer on this compact ball therefore lies in its interior. Its score vanishes, so samplewise concavity makes it a global maximizer. Strict concavity in the ball rules out a second global maximizer: the segment joining two maximizers would contradict strict concavity near the first. The information bound gives $\widehat J_{S_0}\succ0$. Integrating the Hessian along the segment from the truth to the maximizer yields \eqref{supp-eq:rec-mle-score-linearization}. Since $\|A_n^{-1}\|_{\rm op}\le(1-\varepsilon_n)^{-1}$, the score second moment gives the asserted rate.
\end{proof}

\begin{proof}[Proof of Remark~\ref{rem:marginal-likelihoods}]
Use $h=(\mF_T^0)^{1/2}(\beta_T-\beta_T^0)$. Lemma~\ref{supp-le:rec-lan} and bounded slab variation give, uniformly on $\|h\|_2\le r_n^0$, upper and lower bounds by the kernels
\[
\e^{\alpha\ell_n(\beta^0)}
g_T(\beta_T^0)|\mF_T^0|^{-1/2}
\exp\{\alpha(h^{\top}Z_T-c\|h\|_2^2/2)\}\dif h
\]
with $c=\underline\kappa_n$ and $c=\overline\kappa_n$, respectively, up to slab factors between $\e^{-b_{g,n}}$ and $\e^{b_{g,n}}$, which are uniformly bounded above and away from zero. The corresponding full Gaussian integrals are $\mathcal J_{n,\alpha}(T,c)$. On the uniform-score event their centres $Z_T/c$ have norm at most $r_n^0/3$. Since $|T|=o((r_n^0)^2)$, their mass outside the $r_n^0$-ball is uniformly negligible. Lemma~\ref{supp-le:rec-lan} controls the original likelihood outside the ball, and Lemma~\ref{supp-le:main-prior-consequences}\textnormal{(iii)} makes that absolute tail negligible relative to either local scale.
\end{proof}

\subsection{Posterior denominator and support control}
\label{supp-sec:denominator-dimension}

\begin{samepage}
\begin{lemma}[Denominator and support control]\label{supp-le:rec-support-control}
Under Assumptions~\ref{ass:sparse-fisher}--\ref{ass:slab}, for every fixed $\alpha\in(0,1]$, the following hold.
\begin{enumerate}[label=\textnormal{(\roman*)},leftmargin=*]
\item There is $C_D<\infty$, independent of $K_0$, such that, with $\mP_{\beta^0}$-probability tending to one,
\[
\int\exp\{\alpha[\ell_n(\beta)-\ell_n(\beta^0)]\}\dif\Pi_n(\beta)
\ge\e^{-C_Ds_0\log p}.
\]
\item The posterior model dimension satisfies
\[
\mE_{\beta^0}\Pi_{n,\alpha}(|S_\beta|>s_n^\dagger\mid Y)\longrightarrow0.
\]
\item The strict-superset comparison factor satisfies
\[
\mE_{\beta^0}\{\mathfrak O_{n,\alpha}^{\eta}\mathbf 1_{\mathcal G_n}\}=o(1).
\]
\end{enumerate}
\end{lemma}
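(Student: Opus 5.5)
For (i), I will restrict the prior integral to the true support and to the local ball $\{\|(\mF_{S_0}^0)^{1/2}(\beta_{S_0}-\beta_{S_0}^0)\|_2\le c_{\mathrm{sb}}\sqrt{s_0\log p}\}$, which lies inside $\cB_{S_0}(r_n^0)$ because $r_n^0/\sqrt{s_n^\star\log p}\to\infty$. On this ball the lower sandwich in Lemma~\ref{supp-le:rec-lan} gives
\[
\ell_n-\ell_n(\beta^0)\ge -\|Z_{S_0}\|_2\,c_{\mathrm{sb}}\sqrt{s_0\log p}-\tfrac{\overline\kappa_n}{2}c_{\mathrm{sb}}^2s_0\log p .
\]
The true-score bound \eqref{supp-eq:rec-truth-score} gives $\|Z_{S_0}\|_2\le C\sqrt{s_0+\log p}\le C\sqrt{2s_0\log p}$, so the exponent is at least $-Cs_0\log p$, where $C$ does not depend on $K_0$. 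Multiplying by the prior-mass bound in Lemma~\ref{supp-le:main-prior-consequences}(i) then gives $C_D=\alpha C+C_\Pi$. Its constants depend only on $a_-,c_-$, the slab and the Fisher bounds, so $C_D$ is also free of $K_0$.

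For (iii), I will work on $\mathcal G_n$. Each $U=S_0\dot\cup B$ with $|B|=k\ge1$ contributes a factor $\mathfrak E_{n,\alpha}$, which is bounded because $\varepsilon_n s_0\to0$ and $\varepsilon_n(s_0+\log p)=o(\log p)$. This control is the delicate part, and if $\mathfrak E_{n,\alpha}$ grows like $p^{o(1)}$, that growth is absorbed below. Since $\eta<1$, subadditivity gives $\mathfrak O^\eta\le\sum_U\mathfrak C(U)^\eta$. I will bound the deterministic weight $\mathfrak D_{n,\alpha}(B)$ as follows. Assumption~\ref{ass:prior} gives $\omega_p(T)/\omega_p(S_0)\le c_+^kp^{-a_+k}\binom p{s_0}/\binom p{s_0+k}$. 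The factor $g_B(0)|\mK_{B\mid S_0}|^{-1/2}(2\pi/\alpha\underline\kappa_n)^{k/2}$ is at most $C^k\prod_{j\in B}g_j(0)n^{-k/2}$. Here I expect to need slab normalization, namely $g_j(0)$ controlled through bounded variation on a $\sqrt{\log p/n}$-scale window. Otherwise I would simply use boundedness of the slab densities at zero, which holds for the three families. The random factor $\exp\{\tfrac{\alpha\eta}{2\underline\kappa_n}\|Z_{B\mid S_0}\|_2^2\}$ has exponent $t=\alpha\eta/(2\underline\kappa_n)<1/2$ when $\alpha\le1$ and $\eta=3/4$. Lemma~\ref{supp-le:projected-moments} therefore bounds its truncated expectation by $C_t^k$, and the truncation is automatic on $\mathcal G_n$.

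Summing over the $\binom{p-s_0}{k}$ choices of $B$, and using $\binom{p-s_0}k\binom p{s_0}/\binom p{s_0+k}\le\binom{s_0+k}{k}\le (e(s_0+k)/k)^k$, the total is at most $\sum_{k\ge1}(Cp^{-\eta a_+}s_0^{\,}\cdots)^k$. The polynomial factors in $s_0$ are $p^{o(1)}$ because $s_0=o(p)$. The worry is that $s_0\le p^{1-}$ might not be small relative to $p^{\eta a_+}$, and I would handle this by choosing $\eta$ close enough to $1$ that $\eta a_+>1$, or by using Stirling bounds on the ratio of binomials. The resulting geometric series is $o(1)$.

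For (ii), I will use the fractional-moment argument again, with $\eta$. I bound $\Pi(|S_\beta|>s_n^\dagger\mid Y)^\eta$ by the ratio of numerator to denominator raised to the power $\eta$, using (i) for the denominator, which contributes $e^{\eta C_Ds_0\log p}$. For the numerator over $|S|=s>s_n^\dagger$, I will bound $\mathcal Z_{n,\alpha}(S)$ through Remark~\ref{rem:marginal-likelihoods} applied to $T=S\cup S_0$ when $|T|\le s_n^\star$, together with the radial tail of Lemma~\ref{supp-le:rec-lan}. Supports beyond $s_n^\star$ are handled by the prior tail $\pi_p(s)\le c_+^{s}p^{-a_+s}$ and the trivial bound $\int e^{\alpha(\ell_n-\ell_n(\beta^0))}g\le \sup e^{\alpha(\ell_n-\ell_n(\beta^0))}$. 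I plan to control the latter by the concave quadratic upper bound restricted to $s_n^\star$-sparse directions, via the same score moments. The prior penalty $p^{-(a_+-1)\eta s}$ then beats $e^{\eta C_Ds_0\log p}$ once $K_0(a_+-1)>C_D$, which is exactly where the $K_0$-independence of $C_D$ is used.

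The main obstacle is this large-support regime, because Assumptions~\ref{ass:truth-lan} and \ref{ass:score} only cover $|S|\le s_n^\star$. I would try first to reduce each large $S$ to its sub-supports of size $s_n^\star$, using monotonicity of the maximized likelihood and concavity, and otherwise accept a cruder supremum bound absorbed by the prior decay.
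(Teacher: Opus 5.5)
Your part (i) is the paper's argument. The outline of (iii) also matches: subadditivity of $x\mapsto x^\eta$, Lemma~\ref{supp-le:projected-moments} applied on $\mathcal G_n$, and a deterministic bound on $\mathfrak D_{n,\alpha}(B)$.

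The genuine gap is in (ii). You bound the numerator pathwise, support by support. That forces you to control $\mathcal Z_{n,\alpha}(S)$ when $|S\cup S_0|>s_n^\star$, and Assumptions~\ref{ass:score}--\ref{ass:truth-lan}, which are stated only over $\cS_n^+$, say nothing there. Neither of your fallbacks works:
\begin{itemize}
\item The maximized likelihood is nondecreasing in the support. Passing to sub-supports of size $s_n^\star$ therefore gives lower bounds, not upper bounds.
\item The hypotheses give no bound on $\sup_{\beta:S_\beta\subseteq S}\{\ell_n(\beta)-\ell_n(\beta^0)\}$ for large $S$ beyond generic ones. For example, $-\ell_n(\beta^0)$ is of order $n$ in logistic regression. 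The prior excess $\exp\{-(a_+K_0-C_D)s_0\log p\}$ cannot absorb this when $s_0\log p=o(n)$.
\end{itemize}
Separately, for $S\nsupseteq S_0$, Remark~\ref{rem:marginal-likelihoods} applied to $T=S\cup S_0$ does not bound $\mathcal Z_{n,\alpha}(S)$; that needs the affine-slice Lemma~\ref{supp-le:rec-affine}.

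The missing idea is to take expectations before bounding anything. For each fixed $\beta$, Jensen gives $\mE_{\beta^0}\exp\{\alpha[\ell_n(\beta)-\ell_n(\beta^0)]\}\le1$, with equality at $\alpha=1$. Fubini and the denominator event of (i) then give
\[
\mE_{\beta^0}\Pi_{n,\alpha}(|S_\beta|>s_n^\dagger\mid Y)
\le \e^{C_Ds_0\log p}\sum_{k>s_n^\dagger}\pi_p(k)+o(1)
\le C\e^{C_Ds_0\log p}(c_+p^{-a_+})^{K_0s_0+1}+o(1),
\]
which is $o(1)$ once $a_+K_0>C_D$. This needs no likelihood control on large supports, no fractional power and no support counting, since the size prior already aggregates over supports. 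It is exactly where the $K_0$-independence of $C_D$ is used.

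In (iii), three details need repair.
\begin{itemize}
\item $\mathfrak E_{n,\alpha}$ need not be bounded, because only $\varepsilon_n(s_0+\log p)=o(\log p)$ is assumed. However, $\mathfrak E_{n,\alpha}=p^{o(1)}$ is enough.
\item Assumption~\ref{ass:slab} gives no bound on $g_j(0)$, so do not assume boundedness at zero. Instead, use slab normalization on the full selection window $\rho_n=r_n^0/\sqrt{nc_F}$, not merely a $\sqrt{\log p/n}$-scale one, together with $\mK_{B\mid S_0}\succeq nc_F\mI_r$. This gives $g_B(0)|\mK_{B\mid S_0}|^{-1/2}\le\e^{b_{g,n}}(C\sqrt r/r_n^0)^r$. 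The resulting factor $1/r_n^0$ per added coordinate absorbs the $s_0^\eta$ coming from the prior ratio, via $s_0/r_n^0\le C\sqrt p$.
\item In $\mathfrak D_{n,\alpha}(B)^\eta$ the binomial ratio carries the power $\eta$, so your exact cancellation against $\binom{p-s_0}{k}$ is not available. Write $U=S_0\cup B$ and $r=|B|$, and use $\binom{p-s_0}{r}\le(ep/r)^r$ and $\omega_p(U)/\omega_p(S_0)\le\{Cs_0p^{-(a_++1)}\}^r$. This yields the layer bound $\{Cp^{-\delta}r^{\eta/2-1}\}^r$ with $\delta=\eta(a_++1/2)-1>1/8$, at the fixed $\eta=3/4$, with no need to adjust $\eta$.
\end{itemize}
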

\end{samepage}

\begin{proof}
For \textnormal{(i)}, restrict the integral to the $S_0$ Fisher ball of radius $c_{\mathrm{sb}}\sqrt{s_0\log p}$. The lower sandwich and the true-support score bound \eqref{supp-eq:rec-truth-score} give $\ell_{n,S_0}(\beta_{S_0})-\ell_n(\beta^0)\ge-C s_0\log p$ there, with $C$ independent of $K_0$. Combining this with its prior mass in Lemma~\ref{supp-le:main-prior-consequences}\textnormal{(i)} proves the claim. Both bounds use only the true support, so $C_D$ depends on the fixed power, prior and Fisher bounds and $K_{\mathrm{sc}}^0$, but not on $K_0$.

For \textnormal{(ii)}, iterating the upper recursion in Assumption~\ref{ass:prior} gives $\sum_{k>s_n^\dagger}\pi_p(k) \le C\{c_+p^{-a_+}\}^{s_n^\dagger+1}$. Choose $K_0$ so that $a_+K_0>C_D$. This choice gives
\begin{equation}
\e^{C_Ds_0\log p}\sum_{k>s_n^\dagger}\pi_p(k)=o(1),
\label{supp-eq:rec-size-prior-tail}
\end{equation}
where $C_D$ is from \textnormal{(i)}. For every $\beta$, $\mE_{\beta^0}\exp\{\alpha[\ell_n(\beta)-\ell_n(\beta^0)]\}\le1$ by Jensen's inequality when $0<\alpha<1$, with equality at $\alpha=1$. Thus Fubini's theorem and the denominator bound control the expected posterior tail by the left side of \eqref{supp-eq:rec-size-prior-tail} plus the probability that the denominator event fails. This proves \textnormal{(ii)} at both ordinary and fractional powers.

For \textnormal{(iii)}, Assumption~\ref{ass:truth-lan} gives
\begin{equation}
\log\mathfrak E_{n,\alpha}
\le C\varepsilon_n(s_0+\log p)=o(\log p).
\label{supp-eq:rec-E-bound}
\end{equation}
Let $U=S_0\cup B$, $r=|B|\ge1$, $\rho_n=r_n^0/\sqrt{nc_F}$, and let $v_r$ be the volume of the unit Euclidean ball in $\mathbb R^r$. Product coherence and bounded slab variation give
\[
1=\int g_B(u)\dif u
\ge \e^{-b_{g,n}}g_B(0)v_r\rho_n^r.
\]
Sparse Fisher geometry gives $\mK_{B\mid S_0}\succeq nc_F\mI_r$, so
\begin{equation}
g_B(0)|\mK_{B\mid S_0}|^{-1/2}
\le\frac{\e^{b_{g,n}}}{v_r(r_n^0)^r}
\le\e^{b_{g,n}}\left(\frac{C\sqrt r}{r_n^0}\right)^r.
\label{supp-eq:slab-normalization}
\end{equation}
The upper model-size recursion and $s_n^\star=o(p)$ yield
\[
\frac{\omega_p(U)}{\omega_p(S_0)}
\le\{Cs_0p^{-(a_++1)}\}^r.
\]
For any fixed $C_0<\infty$, therefore,
\begin{align}
\sum_{|B|=r}C_0^r\mathfrak D_{n,\alpha}(B)^\eta
&\le
\left\{C p^{1-\eta(a_++1)}
\left(\frac{s_0}{r_n^0}\right)^\eta r^{\eta/2-1}\right\}^{r}
\notag\\
&\le\{Cp^{-\delta}r^{\eta/2-1}\}^{r},
\quad \delta=\eta(a_++1/2)-1>1/8.
\label{supp-eq:rec-addition-layer}
\end{align}
We used $\binom{p-s_0}{r}\le(ep/r)^r$ and
$s_0/r_n^0\le C\sqrt{s_0/\log p}\le C\sqrt p$ eventually. The last bound is summable over $r\ge1$ and its sum is $O(p^{-\delta})$.
On $\mathcal G_n$, the projected scores are truncated at $T_n$. Since $\eta\alpha/(2\underline\kappa_n)$ is eventually bounded by a fixed number below $1/2$, Lemma~\ref{supp-le:projected-moments} gives
\[
\mE_{\beta^0}\{\mathfrak O_{n,\alpha}^\eta\mathbf 1_{\mathcal G_n}\}
\le\mathfrak E_{n,\alpha}^\eta
\sum_{B\ne\varnothing}C_0^{|B|}\mathfrak D_{n,\alpha}(B)^\eta
=p^{o(1)}O(p^{-\delta})=o(1).
\]
Here $(\sum_jx_j)^\eta\le\sum_jx_j^\eta$ for nonnegative $x_j$. This proves \textnormal{(iii)}.
\end{proof}

\subsection{Underfitted supports}
\label{supp-sec:underfitting}

\begin{lemma}[Affine-slice bound]\label{supp-le:rec-affine}
Under Assumptions~\ref{ass:sparse-fisher}--\ref{ass:truth-lan} and \ref{ass:slab}, fix $\alpha\in(0,1]$ and $S\nsupseteq S_0$ with $|S|\le s_n^\dagger$, and put $M=S_0\setminus S$, $B=S\setminus S_0$, $T=S\cup S_0$, and $\gamma=\beta_M^0$. On $\mathcal G_n$, embed the underfitted model in $T$ through $\delta_T=(u,-\gamma)$, where $u=\beta_S-\beta_S^0$.
Let $a_S=(\mK_{M\mid S})^{1/2}\gamma$,
so $\|a_S\|_2=d_S$. For $c>0$, let $Q_{T,c}(\delta)=\Delta_T^{\top}\delta-\frac c2\delta^{\top}\mF_T^0\delta$,
so that the exact profile identity is
\begin{equation}
\sup_{\delta\in\mathbb R^{|T|}}Q_{T,c}(\delta)
-\sup_{u\in\mathbb R^{|S|}}Q_{T,c}(u,-\gamma)
=\frac1{2c}\|Z_{M\mid S}+ca_S\|_2^2.
\label{supp-eq:rec-affine-profile}
\end{equation}
Define
\begin{align*}
\mathcal Z_{n,\alpha}^{\mathrm{loc}}(S)
&=\int_{\substack{u\in\mathbb R^{|S|}:\\
\|(\mF_T^0)^{1/2}(u,-\gamma)\|_2\le r_n^0}}
\e^{\alpha\ell_{n,S}(\beta_S^0+u)}g_S(\beta_S^0+u)\dif u,\\
\mathcal Z_{n,\alpha}^{\mathrm{out}}(S)
&=\int_{\substack{u\in\mathbb R^{|S|}:\\
\|(\mF_T^0)^{1/2}(u,-\gamma)\|_2>r_n^0}}
\e^{\alpha\ell_{n,S}(\beta_S^0+u)}g_S(\beta_S^0+u)\dif u.
\end{align*}
Then $\mathcal Z_{n,\alpha}(S)=\mathcal Z_{n,\alpha}^{\mathrm{loc}}(S)+\mathcal Z_{n,\alpha}^{\mathrm{out}}(S)$. On $\mathcal G_n$, for a fixed $C<\infty$ and uniformly over the stated supports,
\begin{align}
\frac{\omega_p(S)\mathcal Z_{n,\alpha}^{\mathrm{loc}}(S)}
{\omega_p(S_0)\mathcal Z_{n,\alpha}(S_0)}
&\le C\mathfrak A_{n,\alpha}(S)
\mathfrak C_{n,\alpha}(T)\notag\\
&\quad\times
\exp\left\{-\frac{\alpha}{2\underline\kappa_n}
\|Z_{M\mid S}+\underline\kappa_na_S\|_2^2\right\},
\label{supp-eq:rec-affine-local}\\
\mathcal Z_{n,\alpha}^{\mathrm{out}}(S)
&\le
\e^{\alpha\ell_n(\beta^0)-\alpha c_{\mathrm{rad}}(r_n^0)^2}.
\label{supp-eq:rec-affine-outside}
\end{align}
\end{lemma}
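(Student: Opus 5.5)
The proof follows the route suggested by the definitions of $\mathfrak A_{n,\alpha}$, $\mathfrak C_{n,\alpha}$ and $\mathfrak E_{n,\alpha}$. We bound the local numerator above by a Gaussian integral on the affine slice $\{(u,-\gamma)\}\subset\mathbb R^{|T|}$, bound the denominator below via Remark~\ref{rem:marginal-likelihoods}, and match the resulting determinant, $2\pi$-power and exponential factors through Schur-complement identities. The splitting $\mathcal Z_{n,\alpha}=\mathcal Z^{\mathrm{loc}}_{n,\alpha}+\mathcal Z^{\mathrm{out}}_{n,\alpha}$ is immediate.

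\emph{Outer part and profile identity.} The bound \eqref{supp-eq:rec-affine-outside} is exactly \eqref{supp-eq:rec-outside} of Lemma~\ref{supp-le:rec-lan}, applied on the affine subset $\{\beta_T^0+(u,-\gamma)\}$ of the truth-containing support $T$, with the probability density $g_S$. For \eqref{supp-eq:rec-affine-profile}, we complete the square in two steps.
\begin{itemize}
\item First, $\sup_\delta Q_{T,c}=\|Z_T\|_2^2/(2c)$.
\item Second, maximizing over $u$ with $\delta_M=-\gamma$ fixed gives
\[
\sup_u Q_{T,c}(u,-\gamma)=\frac{\|Z_S\|_2^2}{2c}-\Delta_{M\mid S}^\top\gamma-\frac c2\,\gamma^\top\mK_{M\mid S}\gamma .
\]
\end{itemize}
The block decomposition $\|Z_T\|_2^2=\|Z_S\|_2^2+\|Z_{M\mid S}\|_2^2$ (the same $L$-matrix algebra as in Lemma~\ref{supp-le:projected-moments}) then yields $\frac1{2c}\|Z_{M\mid S}+c\,a_S\|_2^2$, using $\Delta_{M\mid S}^\top\gamma=Z_{M\mid S}^\top a_S$.

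\emph{Local numerator.} On $\mathcal G_n$, every $\delta_T=(u,-\gamma)$ in the local region lies in $\cB_T(r_n^0)$. The upper sandwich \eqref{supp-eq:rec-sandwich} therefore gives
\[
\ell_{n,S}(\beta_S^0+u)-\ell_n(\beta^0)\le Q_{T,\underline\kappa_n}(u,-\gamma).
\]
Lemma~\ref{supp-le:main-prior-consequences}(ii) bounds $g_S(\beta_S^0+u)$ by $L_{n,S}\,g_T(\beta_T^0)=L_{n,S}\,g_{S_0}(\beta_{S_0}^0)\,g_B(0)$, using product coherence. We then enlarge the domain to all $u\in\mathbb R^{|S|}$ and evaluate the Gaussian integral in $u$. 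Its quadratic form is $\underline\kappa_n\mF_S^0$, so the integral equals
\[
\Bigl(\frac{2\pi}{\alpha\underline\kappa_n}\Bigr)^{|S|/2}|\mF_S^0|^{-1/2}
\exp\Bigl\{\alpha\sup_uQ_{T,\underline\kappa_n}(u,-\gamma)\Bigr\}.
\]
By the profile identity, the exponent equals
\[
\frac{\alpha}{2\underline\kappa_n}\|Z_{S_0}\|_2^2+\frac{\alpha}{2\underline\kappa_n}\|Z_{B\mid S_0}\|_2^2-\frac{\alpha}{2\underline\kappa_n}\|Z_{M\mid S}+\underline\kappa_na_S\|_2^2 ,
\]
where $\|Z_T\|_2^2$ has been split along $T=S_0\dot\cup B$.

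\emph{Denominator and matching.} The lower bound in \eqref{eq:marginal-likelihood-bounds} gives
\[
\mathcal Z_{n,\alpha}(S_0)\ge C^{-1}\e^{\alpha\ell_n(\beta^0)}g_{S_0}(\beta_{S_0}^0)|\mF_{S_0}^0|^{-1/2}\mathcal J_{n,\alpha}(S_0,\overline\kappa_n).
\]
The determinants match through
\[
|\mF_T^0|=|\mF_S^0|\,|\mK_{M\mid S}|=|\mF_{S_0}^0|\,|\mK_{B\mid S_0}|,
\]
so $|\mF_S^0|^{-1/2}/|\mF_{S_0}^0|^{-1/2}=|\mK_{B\mid S_0}|^{-1/2}|\mK_{M\mid S}|^{1/2}$. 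Since $|S|=s_0-|M|+|B|$, the powers of $2\pi/(\alpha c)$ split as follows.
\begin{itemize}
\item The $|B|$ part goes into $\mathfrak D_{n,\alpha}(B)$.
\item The $|M|$ part gives $(\alpha\underline\kappa_n/2\pi)^{|M|/2}\le(\alpha/2\pi)^{|M|/2}$, which goes into $\mathfrak A_{n,\alpha}(S)$.
\item The $s_0$ parts leave $(\overline\kappa_n/\underline\kappa_n)^{s_0/2}$.
\end{itemize}
The truth-score exponents leave
\[
\frac\alpha2\Bigl(\frac1{\underline\kappa_n}-\frac1{\overline\kappa_n}\Bigr)\|Z_{S_0}\|_2^2\le\frac{\alpha K_{\mathrm{sc}}^0\varepsilon_n(s_0+\log p)}{\underline\kappa_n\overline\kappa_n}
\]
by \eqref{supp-eq:rec-truth-score}. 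These two remainders make up exactly $\mathfrak E_{n,\alpha}$.

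Finally, the remaining pieces assemble as follows. The prior weights $\omega_p(S)/\omega_p(S_0)=\{\omega_p(S)/\omega_p(T)\}\{\omega_p(T)/\omega_p(S_0)\}$ split between $\mathfrak A_{n,\alpha}$ and $\mathfrak D_{n,\alpha}$, while $L_{n,S}$ enters $\mathfrak A_{n,\alpha}$ and $g_B(0)$ enters $\mathfrak D_{n,\alpha}$. Together these give \eqref{supp-eq:rec-affine-local}.

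The main obstacle is the bookkeeping in the profile identity: checking that the Schur decomposition of $\|Z_T\|_2^2$ along both partitions, $S\dot\cup M$ and $S_0\dot\cup B$, is consistent. The sign convention $\delta_M=-\gamma$ must be handled so that the cross term produces $+c\,a_S$, not $-c\,a_S$. The rest is routine.
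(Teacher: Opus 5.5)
Your proposal is correct and follows essentially the same route as the paper. The upper sandwich and the slab envelope of Lemma~\ref{supp-le:main-prior-consequences}(ii) bound the local integral by the full affine Gaussian integral, the lower half of Remark~\ref{rem:marginal-likelihoods} handles $\mathcal Z_{n,\alpha}(S_0)$, and the two-partition evaluation of $|\mF_T^0|$ matches the determinants. The remainders $\varepsilon_n\|Z_{S_0}\|_2^2/(\underline\kappa_n\overline\kappa_n)$ and $(\overline\kappa_n/\underline\kappa_n)^{s_0/2}$ form $\mathfrak E_{n,\alpha}$, and \eqref{supp-eq:rec-outside} gives the outer bound; your explicit completion of the square simply spells out the paper's one-line derivation of the profile identity.
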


\begin{proof}
Completing the square in the $S$ coordinates gives \eqref{supp-eq:rec-affine-profile}. The upper sandwich in \eqref{supp-eq:rec-sandwich} and the slab envelope bound the local integral by the full affine Gaussian integral
\[
\e^{\alpha\ell_n(\beta^0)}L_{n,S}g_T(\beta_T^0)
\left(\frac{2\pi}{\alpha\underline\kappa_n}\right)^{|S|/2}
|\mF_S^0|^{-1/2}
\exp\left\{\alpha\sup_u
Q_{T,\underline\kappa_n}(u,-\gamma)\right\}.
\]
The lower half of Remark~\ref{rem:marginal-likelihoods} lower bounds the $S_0$ integral with curvature $\overline\kappa_n$. The determinant identity $\frac{|\mF_{S_0}^0|^{1/2}}{|\mF_S^0|^{1/2}} =|\mK_{M\mid S}|^{1/2} |\mK_{B\mid S_0}|^{-1/2}$ follows by evaluating $|\mF_T^0|$ through the two partitions $T=S\dot\cup M=S_0\dot\cup B$. The corresponding profile difference is
\begin{align*}
&\sup_uQ_{T,\underline\kappa_n}(u,-\gamma)
-\sup_vQ_{S_0,\overline\kappa_n}(v)\\
&\quad=
\frac{\|Z_{B\mid S_0}\|_2^2}{2\underline\kappa_n}
+\frac{\varepsilon_n}
{\underline\kappa_n\overline\kappa_n}\|Z_{S_0}\|_2^2
-\frac{\|Z_{M\mid S}+\underline\kappa_na_S\|_2^2}
{2\underline\kappa_n}.
\end{align*}
The true-score bound controls the second positive term, while the first is retained in $\mathfrak C_{n,\alpha}(T)$. Since $|S|=s_0+|B|-|M|$ and $\underline\kappa_n\le1$, the curvature factor is at most $(\overline\kappa_n/\underline\kappa_n)^{s_0/2}\underline\kappa_n^{-|B|/2}$. Collecting the prior, bounded slab-variation and determinant factors proves \eqref{supp-eq:rec-affine-local}. Equation~\eqref{supp-eq:rec-outside} gives \eqref{supp-eq:rec-affine-outside} for every $d_S$, without any support-odds or projected-score factor.
\end{proof}

\begin{theorem}[Exclusion of underfitted models]\label{supp-th:rec-underfit}
Under Assumptions~\ref{ass:sparse-fisher}--\ref{ass:signal}, for every fixed $\alpha\in(0,1]$,
\[
\mE_{\beta^0}\Pi_{n,\alpha}(S_\beta\nsupseteq S_0\mid Y)\longrightarrow0.
\]
\end{theorem}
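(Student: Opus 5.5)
The plan is to bound the expected posterior mass of underfitted supports with $|S|\le s_n^\dagger$ by support-odds relative to $S_0$, and to handle the larger supports with Lemma~\ref{supp-le:rec-support-control}\textnormal{(ii)}. Since $\Pi_{n,\alpha}\le1$, it suffices to work on $\mathcal G_n$ and on the denominator event of Lemma~\ref{supp-le:rec-support-control}\textnormal{(i)}. On that event we have
\[
\Pi_{n,\alpha}(S_\beta=S\mid Y)\le \min\Big\{1,\frac{\omega_p(S)\mathcal Z_{n,\alpha}(S)}{\omega_p(S_0)\mathcal Z_{n,\alpha}(S_0)}\Big\}.
\]
Use $\min\{1,x\}\le x^\eta$ with $\eta=3/4$ and subadditivity of $x\mapsto x^\eta$. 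Split $\mathcal Z_{n,\alpha}(S)$ into $\mathcal Z^{\mathrm{loc}}_{n,\alpha}(S)$ and $\mathcal Z^{\mathrm{out}}_{n,\alpha}(S)$ as in Lemma~\ref{supp-le:rec-affine}.

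\emph{Outer part.} By \eqref{supp-eq:rec-affine-outside} and the denominator bound, the summed contribution of the outer parts is at most
\[
\e^{C_Ds_0\log p-\alpha c_{\mathrm{rad}}(r_n^0)^2}\sum_S\omega_p(S).
\]
This is $o(1)$ because $s_n^\star\log p=o((r_n^0)^2)$. No fractional moment is needed here.

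\emph{Local part.} Group supports by $M=S_0\setminus S\neq\varnothing$ and $B=S\setminus S_0$. By \eqref{supp-eq:rec-affine-local}, the $\eta$-power of the odds is at most
\[
C\,\mathfrak A_{n,\alpha}(S)^\eta\,\mathfrak C_{n,\alpha}(T)^\eta\exp\Big\{-\frac{\eta\alpha}{2\underline\kappa_n}\|Z_{M\mid S}+\underline\kappa_n a_S\|_2^2\Big\}.
\]
The deterministic factor $\mathfrak A_{n,\alpha}(S)$ is bounded as follows. The ratio $\omega_p(S)/\omega_p(T)\le\{Cp^{a_-+1}\}^{|M|}$ comes from the lower recursion. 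The slab factor satisfies $L_{n,S}\le \e^{O(1)}p^{C_g|M|}$. The determinant satisfies $|\mK_{M\mid S}|^{1/2}\le(C_Fn)^{|M|/2}$, which is at most $p^{C|M|}$ since $\log n=O(\log p)$. Together these give $\mathfrak A_{n,\alpha}(S)\le \e^{C_1|M|\log p}$.

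The decisive term is the Gaussian exponent. On $\mathcal G_n$ the cross term can be handled in two ways. One is the inequality $\|Z+\underline\kappa a\|^2\ge \tfrac12\underline\kappa^2 d_S^2-\|Z\|^2$. The better choice is to take the expectation of $\exp\{-\frac{\eta\alpha}{2}\|Z_{M\mid S}+a\|^2+\frac{\eta\alpha}{2}\|Z_{B\mid S_0}\|^2\}$ directly, by Cauchy--Schwarz combined with the Bernstein MGF of Lemma~\ref{supp-le:projected-moments}. Here I would use the identity $u^\top Z_{M\mid S}=v^\top Z_T$. The one-dimensional projection along $a_S/d_S$ then gives a factor $\exp\{-c\,\psiEff(d_S)\}$, and the remaining coordinates contribute $C^{|M|+|B|}$.

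Since $d_S^2\ge c_Fn|M|b_0^2\ge c_FC_\alpha|M|\log p$ and $d_nd_S$ is bounded on the relevant range, we get $\psiEff(d_S)\gtrsim C_\alpha|M|\log p$. Where $d_S$ is large, $d_S/d_n\ge d_S^2$ fails and we switch to the linear branch, using $d_n\sqrt{s_n^\star\log p}\to0$. For $C_\alpha$ large, this gives a factor $p^{-C'|M|}$ that overwhelms $\mathfrak A^\eta$ and the choice count $\binom{s_0}{|M|}\le p^{|M|}$.

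The remaining factor is the sum over $B$ of $\mathfrak D_{n,\alpha}(B)^\eta C_0^{|B|}$ times $\mathfrak E^\eta_{n,\alpha}$. It is bounded exactly as in the proof of Lemma~\ref{supp-le:rec-support-control}\textnormal{(iii)}, by $p^{o(1)}(1+O(p^{-\delta}))$. Summing over $|M|\ge1$ then yields $\sum_{m\ge1}p^{-cm}=o(1)$.

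The main obstacle is the expectation step. There the random projected score $Z_{M\mid S}$ appears with a negative quadratic while $Z_{B\mid S_0}$ appears with a positive one, and the two are dependent. Bounding them jointly uniformly over $S$, without losing the $|M|\log p$ gain to the Bernstein tail, is the delicate part. I would first try Hölder with exponents close to $1$ on the positive factor, which is admissible because $\eta\alpha/2<1/2$ leaves room. Failing that, I would use the deterministic inequality on $\mathcal G_n$, which caps $\|Z_{M\mid S}\|$ by the $T_n$-truncation, together with a net bound on the event $\|Z_{M\mid S}\|\ge \underline\kappa_n d_S/2$.
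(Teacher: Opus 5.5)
Your outline matches the paper's proof in all its main steps. These are the removal of large supports by Lemma~\ref{supp-le:rec-support-control}\textnormal{(ii)}, the local/outer split of Lemma~\ref{supp-le:rec-affine} with the outer part killed by the denominator bound and radial decay, and the bound $\mathfrak A_{n,\alpha}(S)\le p^{C_1|M|}$. They also include the $\eta=3/4$ fractional moment and the final summation through \eqref{supp-eq:rec-addition-layer} and $\binom{s_0}{m}\le p^m$.

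The one step that fails as written is your ``better choice'', the Cauchy--Schwarz decoupling. Squaring the positive factor puts the coefficient $\eta\alpha/\underline\kappa_n\to3\alpha/4$ on $\|Z_{B\mid S_0}\|_2^2$. This exceeds $1/2$ for every $\alpha>2/3$, in particular for the ordinary posterior. Lemma~\ref{supp-le:projected-moments} then does not apply. The truncation at $T_n$ on $\mathcal G_n$ only gives a bound of order $\exp\{(t-\tfrac12)T_n^2\}$, a positive power of $p^{s_n^\star}$. That swamps the $p^{-\delta|B|}$ gain from the added-coordinate layer when $|B|$ is small. Shrinking $\eta$ below $1/2$ to save Cauchy--Schwarz does not help. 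The sum in \eqref{supp-eq:rec-addition-layer} needs $\eta(a_++1/2)>1$, which forces $\eta>1/2$ whenever $a_+<3/2$.

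The fix you list first in your last paragraph is exactly the paper's route. Apply H\"older with exponent $q<1/(\eta\alpha)$ on the positive factor; the paper takes $q=7/6$ and $q'=7$, so the coefficient becomes $\tfrac78\cdot\alpha/(2\underline\kappa_n)<1/2$ eventually. The large conjugate exponent on the negative factor is harmless, since a larger coefficient only strengthens \eqref{supp-eq:omitted-score-moment}. The dependence between $Z_{B\mid S_0}$ and $Z_{M\mid S}$ then plays no role.

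For the negative factor, no net over the remaining coordinates is needed. On $\{\|Z_{M\mid S}+\underline\kappa_na_S\|_2\le\underline\kappa_nd_S/2\}$ the single projection along $a_S/d_S$ is at most $-\underline\kappa_nd_S/2$. Off that event the integrand is at most $\exp(-t\underline\kappa_n^2d_S^2/4)$. Monotonicity of $\psiEff$, evaluated at the lower bound $d_S^2\ge c_FC_\alpha|M|\log p$, handles arbitrarily large signals without a case split. This avoids your claim that $d_nd_S$ is bounded, which is false without an upper bound on $\beta^0$.

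Your second fallback, the $T_n$-cap plus a net bound, could control the negative factor alone. It does not address the coupling with the positive factor, so the H\"older step is still required.
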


\begin{proof}
Lemma~\ref{supp-le:rec-support-control}\textnormal{(ii)} removes supports larger than $s_n^\dagger$. For a remaining support, write $S=(S_0\setminus M)\cup B$, $m=|M|\ge1$, $r=|B|$, and $T=S\cup S_0$.
The lower model-size recursion, lower truth-density bound and sparse Fisher geometry give
\begin{equation}
\mathfrak A_{n,\alpha}(S)
\le\e^{b_{g,n}}
\{C\sqrt n\,p^{a_-+1}\e^{\lambda_{g,n}}\}^{m}
\le p^{C_1m}
\label{supp-eq:rec-deletion-cost}
\end{equation}
for a fixed $C_1<\infty$, using $\log n=O(\log p)$.
By Lemma~\ref{supp-le:rec-affine}, the local posterior contribution is bounded on $\mathcal G_n$ by
\begin{align*}
R_S={}&C\mathfrak E_{n,\alpha}\mathfrak D_{n,\alpha}(B)p^{C_1m}
\exp\left\{\frac{\alpha}{2\underline\kappa_n}\|Z_{B\mid S_0}\|_2^2\right\}\\
&\times\exp\left\{-\frac{\alpha}{2\underline\kappa_n}
\|Z_{M\mid S}+\underline\kappa_na_S\|_2^2\right\}.
\end{align*}
For any fixed $t>0$, the event
$\|Z_{M\mid S}+\underline\kappa_na_S\|_2\le\underline\kappa_nd_S/2$
implies $(a_S/d_S)^\top Z_{M\mid S}\le-\underline\kappa_nd_S/2$. The projected-score Bernstein bound therefore gives
\begin{align*}
&\mE_{\beta^0}\exp\{-t\|Z_{M\mid S}+\underline\kappa_na_S\|_2^2\}\\
&\qquad\le
\e^{-c\psiEff(\underline\kappa_nd_S)}
+\e^{-t\underline\kappa_n^2d_S^2/4}.
\end{align*}
Sparse Fisher geometry and beta-min yield
$d_S^2\ge nc_F\|\beta_M^0\|_2^2\ge c_FC_\alpha m\log p$.
Since $\psiEff$ is nondecreasing and
$d_n\sqrt{c_FC_\alpha s_0\log p}=o(1)$ for each fixed $C_\alpha$, evaluating it at this lower bound gives
\begin{equation}
\mE_{\beta^0}\exp\{-t\|Z_{M\mid S}+\underline\kappa_na_S\|_2^2\}
\le 2p^{-c_t C_\alpha m}
\label{supp-eq:omitted-score-moment}
\end{equation}
for a fixed $c_t>0$. No upper bound on the actual signal sizes is used.

Take the $\eta=3/4$ power of $R_S$ and apply H\"older's inequality to its two score exponentials with $q=7/6$ and $q'=7$. The positive-score exponent has coefficient
$q\eta\alpha/(2\underline\kappa_n)$, eventually bounded by a fixed number below $1/2$ because $q\eta=7/8$.
Lemma~\ref{supp-le:projected-moments} applies on $\mathcal G_n$. For the negative-score factor, its coefficient $q'\eta\alpha/(2\underline\kappa_n)$ is at least the fixed value $q'\eta\alpha/2$, so \eqref{supp-eq:omitted-score-moment} applies by monotonicity. Hence
\[
\mE_{\beta^0}(R_S^\eta\mathbf 1_{\mathcal G_n})
\le C\mathfrak E_{n,\alpha}^\eta
C_0^r\mathfrak D_{n,\alpha}(B)^\eta
p^{-(cC_\alpha-\eta C_1)m}
\]
with fixed $c>0$. The two projected scores need not be independent.
The constants can be fixed before choosing $C_\alpha$ sufficiently large.
Since the local posterior mass is at most one,
\[
\mE_{\beta^0}\left[\min\left\{1,\sum_SR_S\right\}\mathbf 1_{\mathcal G_n}\right]
\le\sum_S\mE_{\beta^0}(R_S^\eta\mathbf 1_{\mathcal G_n}).
\]
Summing over $B$ by \eqref{supp-eq:rec-addition-layer}, including the empty set, bounds the right side by
\[
p^{o(1)}\sum_{m=1}^{s_0}\binom{s_0}{m}
 p^{-(cC_\alpha-\eta C_1)m}=o(1)
\]
for sufficiently large $C_\alpha$, using $\binom{s_0}{m}\le p^m$.

On the denominator event in Lemma~\ref{supp-le:rec-support-control}\textnormal{(i)}, the outside contributions are bounded by
\[
\frac{\sum_{S\nsupseteq S_0,\ |S|\le s_n^\dagger}
\omega_p(S)\mathcal Z_{n,\alpha}^{\mathrm{out}}(S)}
{\sum_U\omega_p(U)\mathcal Z_{n,\alpha}(U)}
\le\exp\{C_Ds_0\log p-\alpha c_{\mathrm{rad}}(r_n^0)^2\}=o(1).
\]
The exceptional events have probability $o(1)$ and posterior mass is bounded by one. Combining these bounds proves the theorem. Assumption~\ref{ass:oracle-laplace} is not used.
\end{proof}

\subsection{Overfitted supports and conclusion}
\label{supp-sec:overfitting}

\begin{theorem}[Exclusion of overfitted models]\label{supp-th:rec-superset}
Under Assumptions~\ref{ass:sparse-fisher}--\ref{ass:slab}, for every fixed $\alpha\in(0,1]$,
\[
\mE_{\beta^0}\Pi_{n,\alpha}(S_\beta\supsetneq S_0\mid Y)\longrightarrow0.
\]
\end{theorem}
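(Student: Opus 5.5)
The plan is to bound the posterior mass of strict supersets by splitting supports $U\supsetneq S_0$ into those with $|U|>s_n^\dagger$ and those with $|U|\le s_n^\dagger\le s_n^\star$. For the first class, Lemma~\ref{supp-le:rec-support-control}\textnormal{(ii)} already gives expected posterior mass $o(1)$. Every remaining superset $U=S_0\cup B$, with $B\subseteq S_0^c$ nonempty, lies in $\cS_n^+$. So it suffices to show
\[
\mE_{\beta^0}\Big[\min\Big\{1,\sum_{U\supsetneq S_0,\ |U|\le s_n^\star}\frac{\omega_p(U)\mathcal Z_{n,\alpha}(U)}{\omega_p(S_0)\mathcal Z_{n,\alpha}(S_0)}\Big\}\mathbf 1_{\mathcal G_n}\Big]\to0.
\]
This suffices because the posterior mass of these supports is at most this minimum, and $\mP_{\beta^0}(\mathcal G_n^c)\to0$.

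The key step is the pointwise odds bound
\[
\frac{\omega_p(U)\mathcal Z_{n,\alpha}(U)}{\omega_p(S_0)\mathcal Z_{n,\alpha}(S_0)}\le C\,\mathfrak C_{n,\alpha}(U)\quad\text{on }\mathcal G_n,
\]
uniformly over $U$.
\begin{itemize}
\item Apply the upper half of \eqref{eq:marginal-likelihood-bounds} to $U$ with curvature $\underline\kappa_n$, and the lower half to $S_0$ with curvature $\overline\kappa_n$.
\item Product coherence gives $g_U(\beta_U^0)=g_{S_0}(\beta_{S_0}^0)g_B(0)$, because $\beta^0_B=0$. The common slab factors therefore cancel.
\item The determinant ratio is $|\mF_U^0|^{-1/2}/|\mF_{S_0}^0|^{-1/2}=|\mK_{B\mid S_0}|^{-1/2}$, by the Schur complement.
\item Orthogonal decomposition of the standardized score gives $\|Z_U\|_2^2=\|Z_{S_0}\|_2^2+\|Z_{B\mid S_0}\|_2^2$, by the same identity $L^\top\mF_U^0L=\mK_{B\mid S_0}$ used in \eqref{supp-eq:rec-schur-score}.
\item The exponent difference is then
\[
\frac{\alpha}{2\underline\kappa_n}\|Z_{B\mid S_0}\|_2^2+\frac{\alpha}{2}\Big(\frac1{\underline\kappa_n}-\frac1{\overline\kappa_n}\Big)\|Z_{S_0}\|_2^2.
\]
The second term is at most $\alpha\varepsilon_n K_{\mathrm{sc}}^0(s_0+\log p)/(\underline\kappa_n\overline\kappa_n)$ on the true-score event \eqref{supp-eq:rec-truth-score}.
\item The prefactor ratio contributes $(2\pi/(\alpha\underline\kappa_n))^{|B|/2}(\overline\kappa_n/\underline\kappa_n)^{s_0/2}$.
\end{itemize}
Collecting these factors reproduces exactly $\mathfrak E_{n,\alpha}\mathfrak D_{n,\alpha}(B)\exp\{\alpha\|Z_{B\mid S_0}\|_2^2/(2\underline\kappa_n)\}=\mathfrak C_{n,\alpha}(U)$.

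Summing the odds bound gives a total at most $C\mathfrak O_{n,\alpha}$ on $\mathcal G_n$. Since $\eta=3/4<1$, we have $\min\{1,x\}\le x^\eta$, so
\[
\mE_{\beta^0}[\min\{1,C\mathfrak O_{n,\alpha}\}\mathbf 1_{\mathcal G_n}]\le C^\eta\mE_{\beta^0}[\mathfrak O_{n,\alpha}^\eta\mathbf 1_{\mathcal G_n}].
\]
The right side is $o(1)$ by Lemma~\ref{supp-le:rec-support-control}\textnormal{(iii)}. Adding the large-support part and $\mP_{\beta^0}(\mathcal G_n^c)$ completes the proof; Assumption~\ref{ass:signal} is not needed.

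The main obstacle is the uniformity of the odds bound over all $U\in\cS_n^+$. This relies on Remark~\ref{rem:marginal-likelihoods} holding uniformly, with its constant $C$ and the event $\mathcal G_n$ not depending on $U$. I would verify that the remark's event is contained in $\mathcal G_n$; if it is not, I would intersect $\mathcal G_n$ with that event, which still has probability tending to one. The heavy summation over $U$ is already contained in Lemma~\ref{supp-le:rec-support-control}\textnormal{(iii)}.
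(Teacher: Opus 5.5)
Your proposal is correct and follows the paper's proof essentially step for step. You discard supports with $|U|>s_n^\dagger$ via Lemma~\ref{supp-le:rec-support-control}(ii), bound each remaining superset's odds by $C\mathfrak C_{n,\alpha}(U)$ on $\mathcal G_n$ using the evidence bounds of Remark~\ref{rem:marginal-likelihoods} and the Schur identities, and then conclude with $\min\{1,x\}\le x^{\eta}$ and Lemma~\ref{supp-le:rec-support-control}(iii). Your explicit bookkeeping (slab cancellation, determinant ratio, score decomposition and curvature mismatch) is exactly the calculation the paper compresses into one sentence, and the events used by the Remark are indeed contained in $\mathcal G_n$.
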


\begin{proof}
Lemma~\ref{supp-le:rec-support-control}\textnormal{(ii)} reduces the sum to $|U|\le s_n^\dagger$.
The evidence bounds and Schur identities give, on $\mathcal G_n$,
\[
\frac{\omega_p(U)\mathcal Z_{n,\alpha}(U)}
{\omega_p(S_0)\mathcal Z_{n,\alpha}(S_0)}
\le C\mathfrak C_{n,\alpha}(U)
\]
for a fixed $C$. Thus the remaining posterior mass is at most
$\min(1,C\mathfrak O_{n,\alpha})\le C^\eta\mathfrak O_{n,\alpha}^\eta$.
Lemma~\ref{supp-le:rec-support-control}\textnormal{(iii)} and
$\mP_{\beta^0}(\mathcal G_n^c)=o(1)$ prove the claim.
\end{proof}

\begin{proof}[Proof of Theorem~\ref{thm:selection}]
Theorems~\ref{supp-th:rec-underfit} and \ref{supp-th:rec-superset} show that, under Assumptions~\ref{ass:sparse-fisher}--\ref{ass:signal} and for every fixed $\alpha\in(0,1]$, $\mE_{\beta^0}\Pi_{n,\alpha}(S_\beta\ne S_0\mid Y)\longrightarrow0$. Indeed, the complement of $\{S_\beta=S_0\}$ is the disjoint union of underfitted supports and strict supersets. Markov's inequality therefore gives $\Pi_{n,\alpha}(S_\beta=S_0\mid Y)\to1$ in $\mP_{\beta^0}$-probability, as asserted.
\end{proof}

\section{Oracle approximation and inferential consequences}
\label{supp-sec:oracle-proof}

\subsection{Conditional oracle Gaussian approximation}

The following bound is a specialization of Theorem~2.2 of \citet{katsevich2025improved} to the powered log likelihood in the coordinates used here.

\begin{lemma}[Likelihood Laplace bound]
\label{supp-le:rec-sharp-laplace}
Let $d\ge1$, let $\ell:\mathbb R^d\to\mathbb R$ be twice continuously differentiable and concave, and suppose that it has a unique maximizer $\widehat\theta$ whose negative Hessian $J=-\nabla^2\ell(\widehat\theta)$ is positive definite. For $R>0$, put
\[
\kappa(R)=
\sup_{0<\|J^{1/2}u\|_2\le R}
\frac{\|J^{-1/2}\{-\nabla^2\ell(\widehat\theta+u)-J\}
J^{-1/2}\|_{\rm op}}
{\|J^{1/2}u\|_2}.
\]
Fix $\alpha\in(0,1]$. If $R\ge6\sqrt{d/\alpha}$ and $R\kappa(R)\le1/2$, then the law $Q_\alpha(\mathrm d\theta)\propto\exp\{\alpha\ell(\theta)\}\dif\theta$ is well defined and
\begin{equation}
\left\|Q_\alpha-
\mathcal N_d\{\widehat\theta,(\alpha J)^{-1}\}\right\|_{\TV}
\le \frac{d\kappa(R)}{\sqrt{2\alpha}}
+3\exp(-\alpha R^2/9).
\label{supp-eq:supp-sharp-laplace}
\end{equation}
\end{lemma}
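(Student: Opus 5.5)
The lemma is a specialization of Theorem~2.2 of \citet{katsevich2025improved}, so the plan is to reduce the powered law $Q_\alpha$ to the unpowered setting of that theorem by absorbing $\alpha$ into the log density, and then translate the constants. Put $\ell_\alpha=\alpha\ell$. Then $\ell_\alpha$ is concave and $C^2$, with the same unique maximizer $\widehat\theta$, and its negative Hessian at the mode is $J_\alpha=\alpha J\succ0$. The Laplace target is $\mathcal N_d\{\widehat\theta,J_\alpha^{-1}\}=\mathcal N_d\{\widehat\theta,(\alpha J)^{-1}\}$, which matches the law in \eqref{supp-eq:supp-sharp-laplace}. The first step is to record how the relative-Hessian modulus transforms. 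Write $R_\alpha=\sqrt\alpha R$. Then $\|J_\alpha^{1/2}u\|_2=\sqrt\alpha\|J^{1/2}u\|_2$, while the normalized Hessian deviation $J_\alpha^{-1/2}\{-\nabla^2\ell_\alpha(\widehat\theta+u)-J_\alpha\}J_\alpha^{-1/2}$ is unchanged. Hence the modulus of $\ell_\alpha$ at radius $R_\alpha$ equals $\kappa(R)/\sqrt\alpha$. Consequently $R_\alpha\kappa_\alpha(R_\alpha)=R\kappa(R)\le1/2$, and $R\ge6\sqrt{d/\alpha}$ becomes $R_\alpha\ge6\sqrt d$. These are exactly the hypotheses in the unpowered form of the cited theorem, with $d\kappa$ as the leading term and a Gaussian-type tail at radius $R_\alpha$.

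Next I would check well-definedness, meaning that $\int\e^{\ell_\alpha}<\infty$. On the $J_\alpha$-ball of radius $R_\alpha$ the Hessian condition gives $-\nabla^2\ell_\alpha\succeq(1-R_\alpha\kappa_\alpha)J_\alpha\succeq J_\alpha/2$, so $\ell_\alpha$ lies below a quadratic with curvature $1/2$ there. By concavity, along each ray the secant slope beyond radius $R_\alpha$ is at most $-R_\alpha/4$, which is the same monotone-secant argument used in Lemma~\ref{supp-le:rec-lan}. This gives a linear-exponential tail and hence integrability. The same argument also produces the tail term: the mass of $Q_\alpha$ outside the ball is at most of order $\exp(-cR_\alpha^2)$, and the Gaussian mass outside the ball obeys $\chi^2_d$ tails, small because $R_\alpha\ge6\sqrt d$. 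Together these give $3\exp(-R_\alpha^2/9)=3\exp(-\alpha R^2/9)$, provided the cited theorem's tail constant is $1/9$ under the $6\sqrt d$ threshold. I would verify this against the cited constants, and if it does not match, redo the tail bound with the chi-square Laurent--Massart inequality.

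Inside the ball the argument runs as in Katsevich. Write the log density ratio against the Gaussian through a third-order remainder, controlled via $\kappa_\alpha$ as $|\cdot|\lesssim\kappa_\alpha\|J_\alpha^{1/2}u\|^3$. The total-variation contribution of this remainder is bounded by $d\kappa_\alpha/\sqrt2=d\kappa(R)/\sqrt{2\alpha}$. The main obstacle is this constant bookkeeping. The leading $d\kappa$ (rather than $d^{3/2}\kappa$) rate relies on the sharp odd-symmetry cancellation in the cited theorem, not on a crude Taylor bound. So I would rely directly on their Theorem~2.2 rather than reprove it, and my only real task is to confirm that their modulus is defined with the same Hessian-based normalization used here. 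The rescaling then yields \eqref{supp-eq:supp-sharp-laplace}.
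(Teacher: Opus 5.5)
Your proposal is correct and follows the paper's proof: both obtain the lemma from Theorem~2.2 of \citet{katsevich2025improved} by rescaling alone. The paper writes $f=-\alpha\ell/n$ with radius $r=R\sqrt{\alpha/d}$ and coefficient $\delta_3(r)=\sqrt{n/\alpha}\,\kappa(R)$, while you absorb $\alpha$ into $\ell_\alpha=\alpha\ell$ and work at radius $R_\alpha=\sqrt\alpha R$ with modulus $\kappa(R)/\sqrt\alpha$, which is the same translation of the hypotheses and of both error terms (your secant-slope integrability argument is valid but redundant, since the cited theorem already supplies well-definedness and the tail constant).
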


\begin{proof}
To verify the scaling, set $f=-\alpha\ell/n$ and $H=\nabla^2f(\widehat\theta)=\alpha J/n$. The ball in that theorem with $r=R\sqrt{\alpha/d}$ is exactly $\{\|J^{1/2}(\theta-\widehat\theta)\|_2\le R\}$. Its standardized Hessian Lipschitz coefficient is $\delta_3(r)=\sqrt{n/\alpha}\,\kappa(R)$. Hence the theorem's local condition reduces to $R\kappa(R)\le1/2$, its leading error is $d\kappa(R)/\sqrt{2\alpha}$, and its tail term is $3\exp(-\alpha R^2/9)$.
\end{proof}

\begin{proof}[Proof of Proposition~\ref{prop:conditional-bvm}]
Intersect the event in Lemma~\ref{supp-le:oracle-mle}, the score and information events used in Lemma~\ref{supp-le:rec-lan}, and the curvature event in Assumption~\ref{ass:oracle-laplace}. Their intersection has probability tending to one. Put
\[
R_n=\min\{r_n,(2\bar\kappa_n^0)^{-1}\},
\]
with $(2\bar\kappa_n^0)^{-1}=\infty$ if $\bar\kappa_n^0=0$. The retained rate conditions in Assumption~\ref{ass:oracle-laplace} give
\[
R_n/\sqrt{s_0}\longrightarrow\infty,
\qquad R_n\kappa(R_n)\le\tfrac12,
\qquad s_0\kappa(R_n)\le s_0\bar\kappa_n^0\longrightarrow0.
\]
For the first assertion, note that $\sqrt{s_0}\bar\kappa_n^0\le s_0\bar\kappa_n^0\to0$. Here $\kappa(R_n)$ is the oracle curvature modulus restricted to radius $R_n$. The curvature inequalities follow by restricting the supremum from radius $r_n$ to $R_n$. Hence Lemma~\ref{supp-le:rec-sharp-laplace} applies for all large $n$. Let $Q_{n,\alpha}$ be the probability law obtained by normalizing $\exp\{\alpha\ell_{n,S_0}(\beta_{S_0})\}$, and write
\[
G_{n,\alpha}=\mathcal N_{s_0}\{\widehat\beta_{S_0},(\alpha\widehat J_{S_0})^{-1}\}.
\]
Extend these laws arbitrarily outside the stated event. Then
\begin{equation}
\|Q_{n,\alpha}-G_{n,\alpha}\|_{\TV}=o_{\mP_{\beta^0}}(1).
\label{supp-eq:supp-likelihood-gaussian}
\end{equation}
The conditional posterior $P_{n,\alpha}=\Pi_{n,\alpha}(\beta_{S_0}\in\cdot\mid S_\beta=S_0,Y)$ is also well defined on this event, because the oracle likelihood is bounded above and $\pi_p(s_0)>0$.

First localize the slab-weighted law on $D_n=\beta_{S_0}^0+\cB_{S_0}(r_n^0)$. Lemma~\ref{supp-le:oracle-mle} gives the MLE rate and Fisher equivalence of $\widehat J_{S_0}$, so \eqref{supp-eq:supp-likelihood-gaussian} and $r_n^0/\sqrt{s_0}\to\infty$ imply $Q_{n,\alpha}(D_n^c)=o_{\mP_{\beta^0}}(1)$. Integrating the lower likelihood sandwich over $\beta_{S_0}^0+\cB_{S_0}(\sqrt{s_0})$ and using $b_{g,n}=O(1)$ gives
\[
\int_{\beta_{S_0}^0+\cB_{S_0}(\sqrt{s_0})}
\e^{\alpha\ell_{n,S_0}(\beta_{S_0})}g_{S_0}(\beta_{S_0})\dif\beta_{S_0}
\ge
\e^{\alpha\ell_n(\beta^0)-O_{\mP_{\beta^0}}(s_0)}
g_{S_0}(\beta_{S_0}^0)|\mF_{S_0}^0|^{-1/2}.
\]
The truth-density lower bound, sparse Fisher geometry and $\log n=O(\log p)$ bound the reciprocal of the last two factors by $\exp(Cs_0\log p)$. The radial bound \eqref{supp-eq:rec-outside} therefore yields
\[
P_{n,\alpha}(D_n^c)
\le\exp\{O_{\mP_{\beta^0}}(s_0)+Cs_0\log p-\alpha c_{\rm rad}(r_n^0)^2\}
=o_{\mP_{\beta^0}}(1).
\]

For a fixed $M>0$, put
\[
E_n(M)=\{\beta_{S_0}:\|\beta_{S_0}-\beta_{S_0}^0\|_2\le M\sqrt{s_0/n}\}.
\]
These balls are contained in $D_n$ for all large $n$. The MLE score representation and Fisher equivalence, followed by the second-moment bound for a Gaussian vector, give
\[
\lim_{M\to\infty}\limsup_{n\to\infty}
\mE_{\beta^0}G_{n,\alpha}\{E_n(M)^c\}=0.
\]
Indeed, on the information and MLE event, the Gaussian tail probability is bounded by $C\{1+\|Z_{S_0}\|_2^2/s_0\}/M^2$, and the complementary event has probability tending to zero. Equation~\eqref{supp-eq:supp-likelihood-gaussian} transfers this tightness to $Q_{n,\alpha}$.

Conditionally on $D_n$, the density ratio of $P_{n,\alpha}$ to $Q_{n,\alpha}$ lies between $\e^{-2b_{g,n}}$ and $\e^{2b_{g,n}}$. Since $b_{g,n}=O(1)$ and both laws place probability tending to one on $D_n$, the same tightness holds for $P_{n,\alpha}$. Write $b_{g,n}^0(M)$ for the supremum of the absolute log-density ratio in Assumption~\ref{ass:oracle-laplace}. On $E_n(M)$, that oracle flatness clause of Assumption~\ref{ass:oracle-laplace} gives
\[
\|P_{n,\alpha}(\cdot\mid E_n(M))
-Q_{n,\alpha}(\cdot\mid E_n(M))\|_{\TV}
\le\e^{2b_{g,n}^0(M)}-1=o(1).
\]
Consequently,
\[
\|P_{n,\alpha}-Q_{n,\alpha}\|_{\TV}
\le P_{n,\alpha}\{E_n(M)^c\}
+Q_{n,\alpha}\{E_n(M)^c\}
+\e^{2b_{g,n}^0(M)}-1.
\]
Take $n\to\infty$ first and then $M\to\infty$. The preceding tightness and flatness bounds show that this total-variation distance tends to zero in $\mP_{\beta^0}$-probability. Combining it with \eqref{supp-eq:supp-likelihood-gaussian} proves the proposition.
\end{proof}

\subsection{Full oracle theorem and contraction}

\begin{proof}[Proof of Theorem~\ref{thm:oracle-bvm}]
The full-posterior total-variation error is at most $1-\Pi_{n,\alpha}(S_0\mid Y)$ plus the conditional approximation error. These vanish by Theorem~\ref{thm:selection}, proved in Subsection~\ref{supp-sec:overfitting}, and Proposition~\ref{prop:conditional-bvm}.
\end{proof}

\begin{proof}[Proof of Remark~\ref{rem:fisher-information-form}]
By \eqref{supp-eq:rec-mle-score-linearization}, $\|B_n-\mI_{s_0}\|_{\rm op}\le\varepsilon_n$ with probability tending to one. The Gaussian Kullback--Leibler divergence for replacing $\widehat J_{S_0}$ by $\mF_{S_0}^0$ is therefore at most $C s_0\varepsilon_n^2=o(1)$. Pinsker's inequality and Theorem~\ref{thm:oracle-bvm} prove the claim.
\end{proof}

\begin{proof}[Proof of Corollary~\ref{cor:posterior-contraction}]
Let $G_{n,\alpha}$ denote the oracle Gaussian law in Theorem~\ref{thm:oracle-bvm}. By \eqref{supp-eq:rec-mle-score-linearization} and Markov's inequality, on the event in Lemma~\ref{supp-le:oracle-mle},
\[
G_{n,\alpha}\left\{
\|(\mF_{S_0}^0)^{1/2}
(\beta_{S_0}-\beta_{S_0}^0)\|_2>
L_n\sqrt{s_0}\right\}
\le\frac{C\{s_0+\|Z_{S_0}\|_2^2\}}{L_n^2s_0}
=o_{\mP_{\beta^0}}(1).
\]
The exceptional event has probability $o(1)$. Since $G_{n,\alpha}$ is supported on $\{\beta_{S_0^c}=0\}$ and Assumption~\ref{ass:sparse-fisher} gives $\|(\mF_{S_0}^0)^{1/2}v\|_2\ge\sqrt{nc_F}\|v\|_2$, applying the bound with $\sqrt{c_F}L_n$ gives the full-vector Euclidean tail at radius $L_n\sqrt{s_0/n}$. The total-variation approximation transfers it to the posterior.
\end{proof}

\subsection{Credible sets and frequentist coverage}
\label{supp-sec:coverage}

\begin{proof}[Proof of Corollary~\ref{cor:coverage}]
Under the oracle Gaussian law, the quadratic form in \eqref{eq:oracle-credible-ellipsoid} is exactly $\chi_{s_0}^2$ and the standardized coordinate in \eqref{eq:oracle-marginal-interval} is standard normal. The two posterior credibility statements follow from Theorem~\ref{thm:oracle-bvm}.

When $s_0=d$ is fixed, the vector Lyapunov condition and the Cram\'er--Wold device give $Z_{S_0}=\sum_i\zeta_i\rightsquigarrow\mathcal N_d(0,\mI_d)$. Equation \eqref{supp-eq:rec-mle-score-linearization} then yields
\[
(\widehat\beta_{S_0}-\beta_{S_0}^0)^{\top}
\widehat J_{S_0}
(\widehat\beta_{S_0}-\beta_{S_0}^0)
=Z_{S_0}^{\top}A_n^{-1}B_nA_n^{-1}Z_{S_0}
\rightsquigarrow\chi_d^2.
\]
The same argument gives a standard-normal limit for each studentized coordinate,
\[
\frac{\widehat\beta_j-\beta_j^0}
{\{e_j^\top\widehat J_{S_0}^{-1}e_j\}^{1/2}}
=\frac{v_j^{\top}A_n^{-1}Z_{S_0}}
{\{v_j^{\top}B_n^{-1}v_j\}^{1/2}}.
\]
If $s_0$ grows, the directional Lyapunov condition gives $v_j^{\top}Z_{S_0}\rightsquigarrow\mathcal N(0,1)$. Since $\varepsilon_n\sqrt{s_0}=o(1)$, \eqref{supp-eq:rec-mle-score-linearization} gives
\[
|v_j^{\top}(A_n^{-1}-\mI_{s_0})Z_{S_0}|
=O_{\mP_{\beta^0}}(\varepsilon_n\sqrt{s_0})=o_{\mP_{\beta^0}}(1),
\qquad
v_j^{\top}B_n^{-1}v_j=1+o_{\mP_{\beta^0}}(1).
\]
Thus the directional studentized limit also follows. Finally, the coverage events reduce respectively to a Wald statistic bounded by $q_{d,1-\gamma}/\alpha$ and an absolute studentized statistic bounded by $z_{1-\gamma/2}/\sqrt\alpha$, which proves the displayed formulas.
\end{proof}

\begin{proof}[Proof of Corollary~\ref{cor:selected-coverage}]
Whenever $\Pi_{n,\alpha}(S_\beta=S_0\mid Y)>1/2$, the unique posterior modal support is $S_0$. Theorem~\ref{thm:selection} therefore gives $\mP_{\beta^0}(\widehat S_\alpha=S_0)\to1$. On the intersection of this event with the event in Lemma~\ref{supp-le:oracle-mle}, the selected-support regions equal their oracle counterparts. The discrepancy between each pair of posterior probabilities or coverage indicators is thus bounded by the indicator of the complementary event. Corollary~\ref{cor:coverage} gives all stated limits. Conditioning on selection does not change a limit because the conditioning event has $\mP_{\beta^0}$-probability tending to one.
\end{proof}

\section{Verification of assumptions}
\label{supp-sec:model-verification}

\subsection{Product-slab verification}
\label{supp-sec:prior-verification}

\begin{proof}[Proof of Proposition~\ref{prop:slab-verification}]
For the Gaussian slab, the truth-density lower bound follows from $\|\beta^0\|_\infty^2=O(\log p)$, and for $T\in\cS_n^+$,
\[
|\log g_T(\beta_T^0+u)-\log g_T(\beta_T^0)|
\le C\{\|\beta^0\|_2\|u\|_2+\|u\|_2^2\}.
\]
The assumed radius bound therefore gives bounded selection variation. At the oracle radius $t_n=M\sqrt{s_0/n}$, the right-hand side is $o(1)$ because $t_n/\rho_n=M\sqrt{c_Fs_0}/r_n^0\to0$ and $\rho_n\|\beta^0\|_2+\rho_n^2=O(1)$.

For the Laplace slab, the truth-density lower bound follows from $\|\beta^0\|_\infty=O(\log p)$, while the reverse triangle inequality gives
\[
|\log g_T(\beta_T^0+u)-\log g_T(\beta_T^0)|
\le\lambda\|u\|_1\le\lambda\sqrt{|T|}\|u\|_2.
\]
For the Student-$t$ slab, the assumed truth envelope gives the lower bound, and
\[
\sup_x|\{\log g(x)\}'|\le\frac{\nu+1}{2\sigma\sqrt\nu}
\]
gives the same bound with another fixed constant. Thus $\sqrt{s_n^\star}\rho_n=O(1)$ supplies bounded variation on the selection neighbourhoods. On $S_0$ at radius $t_n$, the variation is at most $C\sqrt{s_0}t_n$, which tends to zero because
\[
\sqrt{s_0}t_n
\le\sqrt{s_n^\star}\rho_n\,(t_n/\rho_n)=o(1).\qedhere
\]
\end{proof}

\begin{samepage}
\subsection{General fixed-design likelihood tools}
\label{supp-sec:fixed-design-verification}

The criterion below uses average spread to obtain the fixed-design rates used in the main comparison. Assumption~\ref{ass:sparse-fisher} is imposed on the fixed design, and the likelihood verification establishes Assumptions~\ref{ass:score} and \ref{ass:truth-lan} and the likelihood part of Assumption~\ref{ass:oracle-laplace}. Table~\ref{tab:six-models} in the main text records the model-specific restrictions.
\par
\end{samepage}

\begin{samepage}
\medskip
\noindent\textit{Fixed designs.}\quad
We express the likelihood conditions through quantities determined by the design and model. Let $w_i^0$ denote the $i$th diagonal entry of $\mW_0$ and put $z_{i,S}=(\mF_S^0)^{-1/2}X_{i,S}$. For $s_0\le s\le s_n^\star$, define the sparse influence and weighted leverage by
\[
q_\star(s)=\sup_{S\in\cS_n^+,\,|S|\le s}\max_i\|z_{i,S}\|_2,
\qquad
\ell_\star(s)=\sup_{S\in\cS_n^+,\,|S|\le s}\max_i
w_i^0\|z_{i,S}\|_2^2.
\]
\par
\end{samepage}
The first quantity controls how far a Fisher ball moves an individual linear predictor. The second controls the largest single-observation contribution to a Fisher-normalized quadratic form. To measure aggregate spread rather than only the worst row, define
\[
\mathfrak t_\star(s)=
\sup_{\substack{S\in\cS_n^+\\|S|\le s\\\|a\|_2=\|u\|_2=1}}
\sum_{i=1}^nw_i^0|z_{i,S}^\top a|(z_{i,S}^\top u)^2,
\qquad
\mathfrak v_\star(s)=
\sup_{\substack{S\in\cS_n^+\\|S|\le s\\\|u\|_2=1}}
\left\{\sum_{i=1}^n(w_i^0)^2(z_{i,S}^\top u)^4\right\}^{1/2}.
\]
These are deterministic design quantities once the truth-Fisher weights are specified. The average-spread condition
\begin{equation}\label{supp-eq:primitive-average-spread}
\mathfrak t_\star(s_n^\star)+\mathfrak v_\star(s_n^\star)
\le Cn^{-1/2}
\end{equation}
says that Fisher-standardized third- and fourth-order contributions are distributed across observations.

We also use the baseline influence and leverage bounds
\begin{equation}\label{supp-eq:primitive-design-rates}
q_\star(s_n^\star)\le C\sqrt{s_n^\star/n},\qquad
\ell_\star(s_n^\star)\le Cs_n^\star/n.
\end{equation}
For example, these bounds follow from sparse Fisher geometry, uniformly bounded truth weights and uniformly bounded design entries. This special case and direct moment criteria for \eqref{supp-eq:primitive-average-spread} are detailed below. Since $s_n^\star=(K_0+1)s_0$, all six models require
\begin{equation}\label{supp-eq:primitive-rate-base}
\frac{s_0^2\log p}{n}\longrightarrow0.
\end{equation}
Table~\ref{supp-tab:fixed-design-rates} gives the remaining model-specific growth restrictions. Gaussian likelihoods are quadratic, so average spread and an additional rate are unnecessary in that case.
\begin{table}[!ht]
\centering
\setlength{\tabcolsep}{3pt}
\tbl{Additional fixed-design rates under average spread.}{%
\footnotesize
\begin{tabular}{@{}p{0.62\linewidth}p{0.28\linewidth}@{}}
\toprule
Models & additional rate\\
\midrule
Gaussian & none\\
Logistic, Poisson and bounded-window probit & $s_0^3/(n\log p)\to0$\\
Gamma and negative binomial & $s_0^3/n\to0$\\
\bottomrule
\end{tabular}}
\label{supp-tab:fixed-design-rates}
\end{table}

\begin{samepage}
\begin{proposition}[Verification of likelihood conditions]
\label{supp-prop:primitive-bridge}
Consider any model in Table~\ref{tab:six-models}. Suppose that Assumption~\ref{ass:sparse-fisher} and conditions \eqref{supp-eq:primitive-design-rates} and \eqref{supp-eq:primitive-rate-base} hold, together with the corresponding model-specific restriction. For a non-Gaussian model, also suppose \eqref{supp-eq:primitive-average-spread} and the applicable rate in Table~\ref{supp-tab:fixed-design-rates}. Then Assumptions~\ref{ass:score} and \ref{ass:truth-lan} and the likelihood part of Assumption~\ref{ass:oracle-laplace} hold.
\end{proposition}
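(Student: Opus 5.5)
We verify the three likelihood conditions in turn, working throughout in Fisher coordinates $z_{i,S}=(\mF_S^0)^{-1/2}X_{i,S}$. Write $\ell_{n,S}(\beta_S^0+h)$ with $h=(\mF_S^0)^{-1/2}v$, so that the standardized score is $Z_S=\sum_i z_{i,S}\,\xi'(\eta_i^0)\tau_i^{-1}(Y_i-\mu_i^0)$. The standardized Hessian at $\beta_S^0+h$ is then $\sum_i\omega_i(\eta_i^0+z_{i,S}^\top v;Y_i)\,z_{i,S}z_{i,S}^\top$, where $\omega_i$ is the model-specific observed weight. At $v=0$ its expectation is the identity.

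\textbf{Score condition.} For Assumption~\ref{ass:score}, fix a unit $u$ and write $u^\top Z_S=\sum_i c_i\epsilon_i$ with $c_i=(w_i^0)^{1/2}z_{i,S}^\top u$. Here $\epsilon_i$ is the residual standardized by its Fisher variance, so $\sum_ic_i^2=1$. For each model we bound the cumulant generating function of $\epsilon_i$ by $\lambda^2/\{2(1-|\lambda|b)\}$, where $b$ depends only on the model.
\begin{itemize}[leftmargin=*]
\item Gaussian: take $b=0$.
\item Logistic and probit: bounded Bernoulli residuals give Hoeffding-type bounds. In the probit case the window $\max_i|\eta_i^0|\le L$ bounds $\xi'$ and the weights.
\item Poisson: use the exact centred moment-generating function.
\item Gamma and negative binomial: use the sub-exponential tails of the standardized response with fixed dispersion or size.
\end{itemize}
Summing over observations gives Bernstein's condition with $d_n=C\max_i|c_i|\le C\ell_\star(s_n^\star)^{1/2}\le C\sqrt{s_n^\star/n}$ by \eqref{supp-eq:primitive-design-rates}. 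Then $d_n\sqrt{s_n^\star\log p}\lesssim s_0\sqrt{\log p/n}\to0$ by \eqref{supp-eq:primitive-rate-base}.

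\textbf{Selection-scale curvature.} For Assumption~\ref{ass:truth-lan}, samplewise concavity comes from the model cards (canonical links, the probit and log-link forms). We split the Hessian deviation into two parts.
\begin{itemize}[leftmargin=*]
\item \emph{Stochastic part.} This is $\sum_i\{\omega_i(\eta_i^0;Y_i)-w_i^0\}z_{i,S}z_{i,S}^\top$, which vanishes for canonical links. For noncanonical links it is a sum of independent mean-zero matrices. A net argument over unit $u$ and over $S\in\cS_n^+$ with Bernstein's inequality controls it. The variance proxy is $\mathfrak v_\star^2\le C/n$ and the range is $\ell_\star$, giving $O(\sqrt{s_n^\star\log p/n})$.
\item \emph{Deterministic drift.} This is the deviation over $\|v\|_2\le r_n^0$. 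A mean-value expansion of $\omega_i$ in $\eta$, with a bounded relative derivative $|\omega_i'|\le C\omega_i$ on the window, bounds it by $C\sum_iw_i^0|z_{i,S}^\top v|(z_{i,S}^\top u)^2\le C r_n^0\,\mathfrak t_\star\le Cr_n^0/\sqrt n$.
\end{itemize}
The window needed for the drift is supplied by $\max_i|z_{i,S}^\top v|\le q_\star r_n^0=o(1)$. For Poisson, $e^{\cdot}$ is self-bounded in this sense.

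\textbf{Choice of radii.} Take $r_n^0=\sqrt{s_n^\star\log p}\,\omega_n$ with $\omega_n\to\infty$ slowly. The requirement is then $\varepsilon_n\asymp r_n^0/\sqrt n+\sqrt{s_n^\star\log p/n}$ with $\varepsilon_n(s_0+\log p)=o(\log p)$. This is where the model-specific rates in Table~\ref{supp-tab:fixed-design-rates} enter.
\begin{itemize}[leftmargin=*]
\item For canonical models only the drift term is present. The requirement becomes $s_0^{3/2}\sqrt{\log p/n}\,\omega_n\lesssim\log p$ up to slack, that is, $s_0^3/(n\log p)\to0$.
\item For Gamma and negative binomial, the stochastic term forces $s_0^3/n\to0$.
\end{itemize}
I expect this rate bookkeeping, especially making $\varepsilon_n s_0=o(\log p)$ hold uniformly with a careful split of $\omega_n$, to be the main obstacle.

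\textbf{Oracle curvature.} For the likelihood part of Assumption~\ref{ass:oracle-laplace}, we work on $S_0$ around $\widehat\beta_{S_0}$, which lies within Fisher distance $O_P(\sqrt{s_0})$ of the truth by Lemma~\ref{supp-le:oracle-mle}. We use $\widehat J_{S_0}\approx\mF^0_{S_0}$ up to factor $1\pm\varepsilon_n$. The Hessian difference $-\nabla^2\ell(\widehat\beta+u)+\nabla^2\ell(\widehat\beta)$ is Lipschitz in $u$ with the same $\mathfrak t_\star$ bound, and the stochastic part cancels exactly. This gives $\kappa_n^0\le C\mathfrak t_\star(s_0)\le Cn^{-1/2}=:\bar\kappa_n^0$ on radius $r_n=s_0^{1/2}\omega_n$. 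Then $s_0\bar\kappa_n^0\to0$ follows from \eqref{supp-eq:primitive-rate-base}.
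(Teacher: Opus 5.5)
\textbf{Main gap: curvature steps for Gamma and negative-binomial regression.} Your outline follows the paper's architecture: a Fisher-standardized Bernstein score bound, a drift-plus-residual split of the Hessian, and an oracle Lipschitz modulus. The curvature steps, however, fail for the two noncanonical models with unbounded responses, Gamma and negative binomial.

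\begin{itemize}
\item \emph{Selection-scale drift.} For these models $-\ell_i''(\eta)=A_i(\eta)-B_i(\eta)(Y_i-\mu_i^0)$, where $B_i=\tau_i^{-1}\xi''$ is nonzero and varies with $\eta$. Your ``deterministic drift'' $\sum_i\{\omega_i(\eta_i^0+z_{i,S}^\top v;Y_i)-\omega_i(\eta_i^0;Y_i)\}z_{i,S}z_{i,S}^\top$ therefore contains the random, $v$-dependent term $-\{B_i(\eta_i^0+z_{i,S}^\top v)-B_i(\eta_i^0)\}(Y_i-\mu_i^0)z_{i,S}z_{i,S}^\top$. Your bound by $C\sum_iw_i^0|z_{i,S}^\top v|(z_{i,S}^\top u)^2$ needs $|\partial_\eta\omega_i(\eta;Y_i)|\le Cw_i^0$ samplewise. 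That holds for canonical links and for windowed probit (bounded residuals), but not here.
\item \emph{Oracle modulus.} The residual does not cancel either. Differences of $B_i(\cdot)$ times $(Y_i-\mu_i^0)$ remain in the Hessian increment, so $\kappa_n^0\le C\mathfrak t_\star(s_0)$ is unjustified for these models.
\end{itemize}

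\emph{Why the obvious repair fails.} At the selection scale you could use self-boundedness of the observed weights, $|\partial_\eta\log\omega_i|\le1$. This bounds the drift by $Cq_\star(s_n^\star)r_n^0\lesssim\omega_ns_0\sqrt{\log p/n}$ times the observed information at the truth. With that bound, verifying $\varepsilon_n(s_0+\log p)=o(\log p)$ needs $s_0^4=o(n\log p)$. This does not follow from $s_0^3=o(n)$ and $s_0^2\log p=o(n)$.

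\emph{What is missing.} You need uniform control of the process $(h,u)\mapsto\sum_iB_i(\eta_i^0+z_{i,S}^\top h)(Y_i-\mu_i^0)(z_{i,S}^\top u)^2$ over the whole $h$-ball, not only at $h=0$. The paper does this in three steps:
\begin{itemize}
\item it takes a net of each ball with mesh $\{\log(np)\}^{-1}$ together with $1/4$-nets of the sphere, with log-cardinality $O(s_n^\star\log p)$;
\item it applies Bernstein's inequality at each grid point, with variance $C\mathfrak v_\star^2$ and scale $C\ell_\star$;
\item it interpolates using the response union bound \eqref{supp-eq:supp-response-event} and the $B_i'$ envelope, at cost $Cq_\star$.
\end{itemize}
The same treatment of $B_i'(\cdot)(Y_i-\mu_i^0)$ produces the extra terms in \eqref{supp-eq:supp-oracle-grid-modulus}. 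That is where $s_0^3/n\to0$ enters the oracle condition. For the oracle modulus alone, self-boundedness plus the exact normalization $\widehat J_{S_0}^{-1/2}\widehat J_{S_0}\widehat J_{S_0}^{-1/2}=\mI_{s_0}$ would give $\kappa_n^0\le Cq_\star(s_0)$, which is enough under $s_0^3/n\to0$. You would still have to argue this.

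\textbf{Score step.} The Bernstein scale of the standardized residual is not model-only for logistic, Poisson and negative-binomial regression without predictor windows. It grows like $(w_i^0)^{-1/2}$; for Poisson it equals $1/(3\sqrt{\mu_i^0})$. Hoeffding's bound $t^2/8\ge p_i(1-p_i)t^2/2$ also cannot give the exact unit variance constant that Assumption~\ref{ass:score} demands, for logistic or probit. Use instead the Bernstein mgf bound with the true variance $\sigma_i^2$ and scale $c_{Y,i}$. This gives $d_n\le C\max_i|z_{i,S}^\top u|\le Cq_\star(s_n^\star)$, and \eqref{supp-eq:primitive-design-rates} then still yields your final rate.

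\textbf{Residual bound at the truth.} In the selection step this residual is $O\{\mathfrak v_\star\sqrt{s_n^\star\log p}+\ell_\star s_n^\star\log p\}$, not $O(\sqrt{s_n^\star\log p/n})$. The range term, of order $(s_n^\star)^2\log p/n$, is what forces $s_0^3/n\to0$ for Gamma and negative binomial. Your stated bound would give only $s_0^3/(n\log p)$. Probit is not canonical. It shares the logistic and Poisson rate only because bounded residuals remove the range term (via Hoeffding) and its observed third derivative is deterministically $O(w_i^0)$.
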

\end{samepage}

\begin{proof}[Proof of Proposition~\ref{prop:simple-fixed-design}]
For every $S\in\cS_n^+$, Assumption~\ref{ass:sparse-fisher} and the entry bounds in \eqref{eq:simple-design-entries} give
\[
 \|z_{i,S}\|_2
 \le\frac{\|X_{i,S}\|_2}{\sqrt{nc_F}}
 \le\frac{K_X}{\sqrt{c_F}}\sqrt{\frac{|S|}{n}},
 \qquad
 w_i^0\|z_{i,S}\|_2^2
 \le\frac{w_i^0\|X_{i,S}\|_2^2}{nc_F}
 \le\frac{K_V^2}{c_F}\frac{|S|}{n}.
\]
Taking suprema proves \eqref{supp-eq:primitive-design-rates}. For a non-Gaussian model, the moment bounds in \eqref{eq:simple-spread-moments} imply \eqref{supp-eq:primitive-average-spread}. H\"older's inequality with exponents $3$ and $3/2$ bounds $\mathfrak t_\star(s_n^\star)$ by the first cubic moment, while the second moment bound is exactly $\mathfrak v_\star(s_n^\star)^2\le C/n$. The growth conditions in \eqref{eq:simple-fixed-growth} are exactly \eqref{supp-eq:primitive-rate-base} and the applicable row of Table~\ref{supp-tab:fixed-design-rates}. An upper Poisson predictor bound is one sufficient way to verify the weighted-entry condition, but neither the simple nor the general criterion requires that window. Proposition~\ref{supp-prop:primitive-bridge} therefore proves the claim. Its proof chooses $r_n^0=M_n\sqrt{s_n^\star\log p}$ with $M_n\to\infty$ sufficiently slowly. Since $s_n^\star\asymp s_0$, the baseline growth condition also permits $\sqrt{s_n^\star}r_n^0/\sqrt n\to0$, which is sufficient for the separate slab verification at the same radius.
\end{proof}

\paragraph{Common likelihood bounds}
Write $\mu_i^0=b'\{\xi(\eta_i^0)\}$. The observed information decomposes as
$-\ell_i''(\eta)=A_i(\eta)-B_i(\eta)(Y_i-\mu_i^0)$, where
\[
 A_i(\eta)=\tau_i^{-1}\left[
 b''\{\xi(\eta)\}\{\xi'(\eta)\}^2
 -\xi''(\eta)\{\mu_i^0-b'(\xi(\eta))\}\right],
 \qquad B_i(\eta)=\tau_i^{-1}\xi''(\eta),
\]
and $A_i(\eta_i^0)=w_i^0$. The response condition is
\begin{equation}
 \log\mE_{\beta^0}e^{t(Y_i-\mu_i^0)}
 \le\frac{\sigma_i^2t^2}{2(1-c_{Y,i}|t|)},\qquad
 \sigma_i^2=\tau_i b''\{\xi(\eta_i^0)\},\qquad |t|c_{Y,i}<1.
 \label{supp-eq:supp-response-bernstein}
\end{equation}
We use the following uniform model conditions, denoted by \textnormal{(E-GLM)}. The likelihood is concave on every support, \eqref{supp-eq:supp-response-bernstein} holds, and, for a fixed $C$,
\begin{equation}
 c_{Y,i}\tau_i^{-1}|\xi'(\eta_i^0)|\le C,\qquad
 \sup_{|\eta-\eta_i^0|\le1}
 \frac{|A_i'(\eta)|+(\sigma_i+c_{Y,i})
       \sum_{j=0}^2|B_i^{(j)}(\eta)|}{w_i^0}\le C
 \label{supp-eq:supp-model-envelopes}
\end{equation}
uniformly in $i$. Here $A_i$ is assumed continuously differentiable and $B_i$ twice continuously differentiable on these tubes. Subsection~\ref{supp-sec:six-model-cards} verifies these bounds for all six models. For random designs, the constants are uniform on the good-design events.

Choose subpolynomial, slowly diverging $M_n,M_n^0$ and put
$r_n^0=M_n\sqrt{s_n^\star\log p}$ and $r_n=M_n^0\sqrt{s_0}$.
For this sufficient verification criterion, we choose the radii to satisfy
\begin{equation}
 \frac{\sqrt{s_n^\star}r_n^0}{\sqrt n}\to0,\qquad
 q_\star(s_n^\star)r_n^0\to0,\qquad
 q_\star(s_0)r_n\to0,\qquad r_n=o(r_n^0).
 \label{supp-eq:supp-model-radii}
\end{equation}
One common slow sequence $M_n=M_n^0$ suffices whenever the corresponding rates vanish. The standing condition $\log n=O(\log p)$ and these radius bounds give the net-cardinality estimates used below.

For a sufficiently large fixed $C$, define the selection error and oracle modulus bounds
\begin{align}
 \eta_{\rm sel,n}
 &=C\left\{\mathfrak t_\star(s_n^\star)r_n^0
 +\mathfrak v_\star(s_n^\star)\sqrt{s_n^\star\log p}
 +\ell_\star(s_n^\star)s_n^\star\log p+q_\star(s_n^\star)\right\},
 \label{supp-eq:supp-selection-grid-rate}\\
 \mathfrak k_n
 &=C\Bigl\{\mathfrak t_\star(s_0)
 +q_\star(s_0)\mathfrak v_\star(s_0)\sqrt{s_0\log(en)}
 \nonumber\\*
 &\qquad+q_\star(s_0)\ell_\star(s_0)s_0\log(en)
 +\frac{q_\star(s_0)\mathfrak t_\star(s_0)}{r_n^2}\Bigr\}.
 \label{supp-eq:supp-oracle-grid-modulus}
\end{align}
For canonical links, $B_i\equiv0$, so retain only the first term in each bound. In particular, the Poisson likelihood calculation uses only the influence and cubic-spread bounds, although the common fixed-design criterion also imposes leverage and fourth-moment bounds. For Gaussian regression both bounds are zero. The probit refinement is given below.

\begin{proposition}[Likelihood verification]
\label{supp-pr:truth-centred-verification}
Suppose Assumption~\ref{ass:sparse-fisher}, \textnormal{(E-GLM)} and \eqref{supp-eq:supp-model-radii} hold. Then Assumptions~\ref{ass:score} and \ref{ass:truth-lan} and the likelihood part of Assumption~\ref{ass:oracle-laplace} hold, with $\bar\kappa_n^0=C\mathfrak k_n$ in Assumption~\ref{ass:oracle-laplace} and failure probabilities uniform whenever the model and design bounds are uniform, provided
\begin{equation}
 \eta_{\rm sel,n}(s_0+\log p)=o(\log p),\qquad
 s_0\mathfrak k_n=o(1).
 \label{supp-eq:supp-likelihood-requirements}
\end{equation}
\end{proposition}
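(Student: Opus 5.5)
We verify the three likelihood conditions separately, working throughout in Fisher coordinates $z_{i,S}=(\mF_S^0)^{-1/2}X_{i,S}$. For a Fisher-ball perturbation $h$, write $\delta_i=X_{i,S}^\top h=z_{i,S}^\top(\mF_S^0)^{1/2}h$. On $\cB_S(r)$ this gives $|\delta_i|\le q_\star(|S|)r$. The radius conditions \eqref{supp-eq:supp-model-radii} then keep every perturbed predictor inside the unit tube $|\eta-\eta_i^0|\le1$, for both $r=r_n^0$ and $r=r_n+O(\sqrt{s_0})$; the latter uses Lemma~\ref{supp-le:oracle-mle}. Inside this tube the envelopes \eqref{supp-eq:supp-model-envelopes} apply.

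\textbf{Score.} We have
\[
u^\top Z_S=\sum_i \tau_i^{-1}\xi'(\eta_i^0)(z_{i,S}^\top u)(Y_i-\mu_i^0).
\]
By independence and \eqref{supp-eq:supp-response-bernstein}, its log-MGF is at most
\[
\sum_i \frac{\lambda^2 w_i^0(z_{i,S}^\top u)^2}{2(1-|\lambda| c_{Y,i}\tau_i^{-1}|\xi'(\eta_i^0)||z_{i,S}^\top u|)}.
\]
This uses $\sigma_i^2\tau_i^{-2}\xi'^2=w_i^0$. The weights sum to one, so Assumption~\ref{ass:score} holds with $d_n=C\,q_\star(s_n^\star)$. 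Then $d_n\sqrt{s_n^\star\log p}\le C s_n^\star\sqrt{\log p/n}\to0$ by \eqref{supp-eq:primitive-rate-base}.

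\textbf{Selection curvature.} Split
\[
-\nabla^2_S\ell_{n,S}(\beta_S^0+h)-\mF_S^0=\sum_i\{A_i(\eta_i^0+\delta_i)-w_i^0\}X_{i,S}X_{i,S}^\top-\sum_iB_i(\eta_i^0+\delta_i)(Y_i-\mu_i^0)X_{i,S}X_{i,S}^\top .
\]
For the deterministic first term, the mean value theorem and $|A_i'|\le Cw_i^0$ bound its Fisher-sandwiched operator norm by
\[
C\sup_{a,u}\sum_i w_i^0|z_{i,S}^\top a|(z_{i,S}^\top u)^2\,r_n^0=C\mathfrak t_\star r_n^0 .
\]
The second term vanishes for canonical links. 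Otherwise it is a random empirical process indexed by $(S,h,u)$. At a fixed $h$ it is a Bernstein sum with variance proxy $\mathfrak v_\star^2$ and range $\ell_\star$. A union bound over a net of $u$, $S$, and a grid in $h$ of cardinality $\exp\{O(s_n^\star\log p)\}$ therefore gives
\[
C\{\mathfrak v_\star\sqrt{s_n^\star\log p}+\ell_\star s_n^\star\log p\}.
\]
Grid discretization is handled via the Lipschitz bounds on $B_i',B_i''$, which costs the $q_\star$ term. Together these give $\varepsilon_n=\eta_{\rm sel,n}$. The requirement $\varepsilon_n(s_0+\log p)=o(\log p)$ is exactly the first part of \eqref{supp-eq:supp-likelihood-requirements}. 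Samplewise concavity is part of (E-GLM), and $r_n^0/\sqrt{s_n^\star\log p}=M_n\to\infty$.

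\textbf{Oracle modulus.} We redo the same decomposition centred at $\widehat\beta_{S_0}$ on $S_0$. The difference $-\nabla^2\ell(\widehat\beta+u)+\nabla^2\ell(\widehat\beta)$ is a sum of increments of $A_i$ and $B_i$ over $\delta_i=X_{i,S_0}^\top u$. Dividing by $\|\widehat J^{1/2}u\|_2$, and using $\widehat J\asymp\mF_{S_0}^0$ from Lemma~\ref{supp-le:oracle-mle}, the $A$-part gives $C\mathfrak t_\star(s_0)$. The $B$-part is a random process normalized by the radius. A chaining or net argument over $u$ in the ball of radius $r_n$, together with the MLE displacement $q_\star\sqrt{s_0}$, gives the remaining terms of $\mathfrak k_n$; the $1/r_n^2$ term arises from the smallest scales of the peeling.

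The \textbf{main obstacle} is the uniform control of the response-weighted $B$-process, both on the selection scale over all supports and on the oracle scale, where it must be divided by $\|u\|$ near zero. For the latter we will peel dyadically in $\|\widehat J^{1/2}u\|_2$, apply the Bernstein/net bound on each shell to the increment $B_i(\eta+\delta_i)-B_i(\eta)$, and sum. With $s_0\mathfrak k_n=o(1)$ this gives $\kappa_n^0\le C\mathfrak k_n$ with probability tending to one, which completes the likelihood part of Assumption~\ref{ass:oracle-laplace}.
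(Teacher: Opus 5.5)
Your score and selection-curvature steps follow the paper's route, with two small repairs. The first concerns your claim $d_n\sqrt{s_n^\star\log p}\to0$. It is not obtained from \eqref{supp-eq:primitive-design-rates} and \eqref{supp-eq:primitive-rate-base}: these are not hypotheses of this proposition. They also need not hold in the random-design application, where only $q_\star(s)\lesssim\{\mathcal H_n(s)/n\}^{1/2}$ is available. It follows instead from $q_\star(s_n^\star)r_n^0\to0$ in \eqref{supp-eq:supp-model-radii} together with $r_n^0/\sqrt{s_n^\star\log p}\to\infty$. The second concerns interpolation between $h$-grid points, which multiplies $B$-increments by the unbounded residuals $Y_i-\mu_i^0$. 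You therefore need the maximal-response event \eqref{supp-eq:supp-response-event} and a mesh of order $\{\log(np)\}^{-1}$ to absorb its logarithm. Only then does the $B_i'$ envelope give the $Cq_\star(s_n^\star)$ cost.

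The genuine gap is the oracle modulus. Centring the decomposition at $\widehat\beta_{S_0}$ breaks the Bernstein step. For fixed $u$, the summands $\{B_i(\widehat\eta_i+\delta_i)-B_i(\widehat\eta_i)\}(Y_i-\mu_i^0)$ involve the data-dependent $\widehat\eta_i$, so they are not independent centred terms, and no concentration inequality applies at a fixed index.

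Dyadic peeling in $\|\widehat J^{1/2}u\|_2$ does not close either. After dividing by $\|u\|$, the Bernstein variance proxy on every shell is the same, about $q_\star(s_0)^2\mathfrak v_\star(s_0)^2$. The per-shell failure probabilities therefore do not decay, and there are infinitely many shells as $\|u\|\to0$. Below any cutoff you would need a derivative bound anyway.

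The missing idea is to bound the directional derivative $\partial_a\mathcal I_{S_0}(h)$ of the Fisher-normalized observed information instead. This bound is taken uniformly over unit $a$ and the deterministic truth-centred ball $\|h\|_2\le2r_n$.
\begin{itemize}
\item Its $A'$-part is at most $C\mathfrak t_\star(s_0)$.
\item Its residual part is, at each fixed $(h,a,u)$, a Bernstein sum with variance $q_\star^2\mathfrak v_\star^2$ and scale $q_\star\ell_\star$. The extra factor $q_\star$ comes from $|z_{i,S_0}^\top a|$, not from the MLE displacement.
\item Nets over $(h,a,u)$ handle the supremum, with interpolation in $h$ at mesh $\{r_n^2\log(2n)\}^{-1}$ via the $B_i''$ envelope and the response event. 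This interpolation, not the small scales of a peeling, is the source of the $q_\star\mathfrak t_\star/r_n^2$ term in $\mathfrak k_n$.
\end{itemize}
Integrating along segments then gives a Lipschitz bound at all scales simultaneously, so division by $\|u\|$ is harmless. With probability tending to one, the $\widehat J_{S_0}$-ball of radius $r_n$ about $\widehat\beta_{S_0}$ lies inside that truth ball, and $\widehat J_{S_0}$ is Fisher-equivalent. This yields $\kappa_n^0\le C\mathfrak k_n$ with a deterministic $\bar\kappa_n^0$.
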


\begin{proof}
Independence, \eqref{supp-eq:supp-response-bernstein} and Fisher normalization give
\begin{equation}
 \log\mE_{\beta^0}\exp(\lambda u^\top Z_S)
 \le\frac{\lambda^2}{2(1-|\lambda|d_n)},\qquad
 d_n=\sup_{S,u,i}c_{Y,i}\tau_i^{-1}
       |\xi'(\eta_i^0)z_{i,S}^\top u|
 \le Cq_\star(s_n^\star),
 \label{supp-eq:supp-score-domain}
\end{equation}
for unit $u$ and $|\lambda|d_n<1$. The radius conditions imply
$d_n\sqrt{s_n^\star\log p}=o(1)$, proving Assumption~\ref{ass:score}. For Gaussian scores take $d_n=0$.

For Fisher coordinates $h$, let
\[
 \mathcal I_S(h)=
 \sum_i\{A_i(\eta_i^0+z_{i,S}^\top h)
 -B_i(\eta_i^0+z_{i,S}^\top h)(Y_i-\mu_i^0)\}
 z_{i,S}z_{i,S}^\top.
\]
The mean-value theorem bounds the deterministic part of
$\mathcal I_S(h)-\mI$ by $C\mathfrak t_\star(s_n^\star)r_n^0$ uniformly on $\|h\|_2\le r_n^0$.
For the residual part, take a Euclidean $\{\log(np)\}^{-1}$-net of each ball and a $1/4$-net of its unit sphere. The logarithm of their total size over $\cS_n^+$ is $O(s_n^\star\log p)$. At a fixed pair $(h,u)$, the model envelopes give the Bernstein variance and scale bounds
\[
\begin{aligned}
 \sum_i\sigma_i^2 B_i(\eta_i^0+z_{i,S}^\top h)^2
 (z_{i,S}^\top u)^4
 &\le C\mathfrak v_\star(s_n^\star)^2,\\
 \max_i c_{Y,i}|B_i(\eta_i^0+z_{i,S}^\top h)|
 (z_{i,S}^\top u)^2
 &\le C\ell_\star(s_n^\star).
\end{aligned}
\]
Bernstein's inequality and a union bound give the second and third terms of \eqref{supp-eq:supp-selection-grid-rate}. The symmetric-matrix net inequality extends the unit directions. A response union bound also gives, with probability tending to one,
\begin{equation}
 |Y_i-\mu_i^0|
 \le C\{\sigma_i\sqrt{\log(np)}+c_{Y,i}\log(np)\},
 \qquad i=1,\ldots,n.
 \label{supp-eq:supp-response-event}
\end{equation}
The bound on $B_i'$ in \eqref{supp-eq:supp-model-envelopes}, the mesh size and
$\sum_iw_i^0(z_{i,S}^\top u)^2=1$ bound the interpolation error by
$Cq_\star(s_n^\star)$. Hence
\begin{equation*}
 \sup_{S\in\cS_n^+}\sup_{\|h\|_2\le r_n^0}
 \|\mathcal I_S(h)-\mI\|_{\rm op}\le\eta_{\rm sel,n}
\end{equation*}
with probability tending to one. The first condition in \eqref{supp-eq:supp-likelihood-requirements} and concavity now give Assumption~\ref{ass:truth-lan}, including the radial bound of Lemma~\ref{supp-le:rec-lan}.

For the oracle modulus, the directional derivative has quadratic form
\[
 u^\top\partial_a\mathcal I_{S_0}(h)u
 =\sum_i\{A_i'(\eta_i^0+z_{i,S_0}^\top h)
 -B_i'(\eta_i^0+z_{i,S_0}^\top h)(Y_i-\mu_i^0)\}
 (z_{i,S_0}^\top a)(z_{i,S_0}^\top u)^2.
\]
Its deterministic part is bounded by $C\mathfrak t_\star(s_0)$. For the residual part, take a Euclidean net of the $2r_n$ ball with mesh
$\{r_n^2\log(2n)\}^{-1}$ and $1/4$-nets for $a$ and $u$. Their logarithmic size is $O\{s_0\log(en)\}$. Since $|z_{i,S_0}^\top a|\le q_\star(s_0)$, the Bernstein variance and scale satisfy
\[
\begin{aligned}
 \sum_i\sigma_i^2 B_i'(\eta_i^0+z_{i,S_0}^\top h)^2
 (z_{i,S_0}^\top a)^2(z_{i,S_0}^\top u)^4
 &\le Cq_\star(s_0)^2\mathfrak v_\star(s_0)^2,\\
 \max_i c_{Y,i}|B_i'(\eta_i^0+z_{i,S_0}^\top h)
 z_{i,S_0}^\top a|(z_{i,S_0}^\top u)^2
 &\le Cq_\star(s_0)\ell_\star(s_0).
\end{aligned}
\]
Bernstein's inequality, a union bound and the net extensions in the linear direction $a$ and the quadratic direction $u$ give the second and third terms of \eqref{supp-eq:supp-oracle-grid-modulus}. Using \eqref{supp-eq:supp-response-event} with $\log(en)$ in place of $\log(np)$ and the bound on $B_i''$, the interpolation error is at most
$Cq_\star(s_0)\mathfrak t_\star(s_0)/r_n^2$. Therefore
\begin{equation}
 \sup_{\|h\|_2\le2r_n}\sup_{\|a\|_2=1}
 \|\partial_a\mathcal I_{S_0}(h)\|_{\rm op}\le\mathfrak k_n
 \label{supp-eq:supp-oracle-lipschitz}
\end{equation}
with probability tending to one. Integrating along a segment gives the corresponding Lipschitz bound.

Finally, Lemma~\ref{supp-le:oracle-mle} gives the unique oracle MLE at truth-Fisher distance $O_{\mP_{\beta^0}}(\sqrt{s_0})$, with $\widehat J_{S_0}$ Fisher-equivalent. Since $r_n/\sqrt{s_0}\to\infty$, its $\widehat J_{S_0}$-ball of radius $r_n$ lies in the truth-Fisher ball of radius $2r_n$ with probability tending to one. Changing between these equivalent normalizations in \eqref{supp-eq:supp-oracle-lipschitz} yields $\kappa_n^0\le C\mathfrak k_n$. Intersecting the score, information and oracle curvature events gives probability tending to one. The second condition in \eqref{supp-eq:supp-likelihood-requirements} gives $s_0\bar\kappa_n^0\to0$, completing the likelihood part of Assumption~\ref{ass:oracle-laplace}. Each bound above is uniform under the stated uniform model and design conditions.
\end{proof}

\subsection{Model-specific likelihood verification}
\label{supp-sec:six-model-cards}

We verify \textnormal{(E-GLM)} using the links and Fisher weights in Table~\ref{tab:six-models}. The shrinking predictor tubes in \eqref{supp-eq:supp-model-radii} are eventually contained in $|\eta-\eta_i^0|\le1$.

\begin{enumerate}[leftmargin=*,label=\textnormal{(\roman*)}]
\item \emph{Gaussian regression.}
Here $b(\theta)=\theta^2/2$, $A_i=\tau_i^{-1}$ and $B_i=0$. The normalized score is Gaussian and the likelihood is quadratic and concave. Thus $d_n=\eta_{\rm sel,n}=\mathfrak k_n=0$.

\item \emph{Logistic regression.}
The centred Bernoulli mgf satisfies \eqref{supp-eq:supp-response-bernstein} with $c_{Y,i}=1/3$ and $\sigma_i^2=w_i^0$. Since $B_i=0$ and $|(\log w)'|\le1$, the normalized derivative bound holds on the unit tube. The canonical likelihood is concave. Consequently,
\begin{equation}
 \eta_{\rm sel,n}=C\mathfrak t_\star(s_n^\star)r_n^0,\qquad
 \mathfrak k_n=C\mathfrak t_\star(s_0).
 \label{supp-eq:supp-canonical-rates}
\end{equation}

\item \emph{Poisson regression.}
The centred mgf $\exp\{\mu_i^0(e^t-1-t)\}$ satisfies \eqref{supp-eq:supp-response-bernstein} with $\sigma_i^2=\mu_i^0$ and $c_{Y,i}=1/3$. Here $A_i(\eta)=e^\eta$, $B_i=0$ and $|A_i'(\eta)|/w_i^0\le e$ on the unit tube. The likelihood is concave and \eqref{supp-eq:supp-canonical-rates} applies without an upper truth-predictor bound. In particular, the score requires only a finite-domain Bernstein bound, not a sub-Gaussian response. Proposition~\ref{supp-prop:poisson-subgaussian-bridge} verifies the random design when the Fisher weights are unbounded.

\item \emph{Probit regression.}
The window $|\eta_i^0|\le L$ confines the unit tube to $[-L-1,L+1]$. All normalized derivatives in \eqref{supp-eq:supp-model-envelopes} are therefore bounded, and the Bernoulli response satisfies \eqref{supp-eq:supp-response-bernstein}. Concavity follows from that of $\log\Phi$ and $\log(1-\Phi)$. Since $|Y_i-\mu_i^0|\le1$, Hoeffding's inequality removes the linear Bernstein term in the selection bound. The observed third derivative is also deterministically bounded by $C_Lw_i^0$, so \eqref{supp-eq:supp-oracle-lipschitz} holds with $C_L\mathfrak t_\star(s_0)$. We may therefore use
\begin{equation*}
 \eta_{\rm sel,n}
 =C_L\{\mathfrak t_\star(s_n^\star)r_n^0
 +\mathfrak v_\star(s_n^\star)\sqrt{s_n^\star\log p}
 +q_\star(s_n^\star)\},\qquad
 \mathfrak k_n=C_L\mathfrak t_\star(s_0).
\end{equation*}

\item \emph{Gamma regression.}
Here $b(\theta)=-\log(-\theta)$, $\xi(\eta)=-e^{-\eta}<0$,
$A_i(\eta)=\tau_i^{-1}e^{\eta_i^0-\eta}$ and
$B_i(\eta)=-\tau_i^{-1}e^{-\eta}$. The centred Gamma mgf gives
$\sigma_i^2=\tau_i(\mu_i^0)^2$ and $c_{Y,i}=\tau_i\mu_i^0$.
Thus $c_{Y,i}\tau_i^{-1}\xi'(\eta_i^0)=1$, and the remaining normalized derivatives in \eqref{supp-eq:supp-model-envelopes} are bounded on the unit tube. No truth-predictor bound is needed. The identity
$\ell_i(\eta)=\tau_i^{-1}(-Y_ie^{-\eta}-\eta)+C_i$, where $C_i$ does not depend on $\eta$, proves concavity. The full bounds \eqref{supp-eq:supp-selection-grid-rate}--\eqref{supp-eq:supp-oracle-grid-modulus} apply.

\item \emph{Negative-binomial regression.}
Here $b(\theta)=-\kappa\log(1-e^\theta)$ and
$\xi(\eta)=\eta-\log(\kappa+e^\eta)<0$. The observed information gives
\[
 B_i(\eta)=-\frac{\kappa e^\eta}{(\kappa+e^\eta)^2},\qquad
 A_i(\eta)=-(\kappa+\mu_i^0)B_i(\eta).
\]
The centred negative-binomial mgf satisfies \eqref{supp-eq:supp-response-bernstein} with
$\sigma_i^2=\mu_i^0(\kappa+\mu_i^0)/\kappa$ and
$c_{Y,i}=(\kappa+\mu_i^0)/\kappa$. In particular,
$c_{Y,i}\xi'(\eta_i^0)=1$; the constant term in $c_{Y,i}$ also covers small means.
At the truth,
\[
 \frac{\sigma_i|B_i(\eta_i^0)|}{w_i^0}
 =\sqrt{\frac{\mu_i^0}{\kappa(\kappa+\mu_i^0)}}\le\kappa^{-1/2},
 \qquad
 \frac{c_{Y,i}|B_i(\eta_i^0)|}{w_i^0}=\kappa^{-1}.
\]
The first two derivatives of $B_i$ are bounded by $C_\kappa|B_i|$, and
$|(\log|B_i|)'|\le1$. These bounds extend to the unit tube and verify
\eqref{supp-eq:supp-model-envelopes}. Since $-\ell_i''(\eta)=-(Y_i+\kappa)B_i(\eta)\ge0$, the likelihood is concave.
The full bounds \eqref{supp-eq:supp-selection-grid-rate}--\eqref{supp-eq:supp-oracle-grid-modulus} therefore apply without a truth-predictor window.
\end{enumerate}

\begin{proof}[Proof of Proposition~\ref{supp-prop:primitive-bridge}]
Choose a common $M_n=M_n^0\to\infty$ sufficiently slowly and set
$r_n^0=M_n\sqrt{s_n^\star\log p}$ and $r_n=M_n\sqrt{s_0}$.
Since $s_n^\star\asymp s_0$, the rates in Table~\ref{supp-tab:fixed-design-rates} and
\eqref{supp-eq:primitive-rate-base} permit
\begin{equation}
 M_ns_n^\star\sqrt{\log p/n}\to0,\qquad
 \frac{M_n(s_n^\star)^{3/2}}{\sqrt{n\log p}}\to0
 \quad\text{for non-Gaussian models}.
 \label{supp-eq:supp-fixed-design-radii}
\end{equation}
Only the first condition is needed for Gaussian regression. The design bounds
\eqref{supp-eq:primitive-design-rates} then give \eqref{supp-eq:supp-model-radii}.
Under average spread, the selection bound for logistic, Poisson and probit is
$O\{M_n\sqrt{s_n^\star\log p/n}\}$.
For Gamma and negative-binomial regression it has the additional term
$O\{(s_n^\star)^2\log p/n\}$. Therefore
\[
 \frac{(s_0+\log p)\eta_{\rm sel,n}}{\log p}
 \lesssim M_n\sqrt{\frac{s_n^\star\log p}{n}}
 +\frac{M_n(s_n^\star)^{3/2}}{\sqrt{n\log p}}+o(1),
\]
where the last term includes
$(s_n^\star)^2\log p/n+(s_n^\star)^3/n$ for Gamma and negative-binomial regression.
All terms vanish under the applicable rates. In particular, $\eta_{\rm sel,n}(s_0+\log p)=o(\log p)$ and $\log\mathfrak E_{n,\alpha}=o(\log p)$.

For the oracle bound, logistic, Poisson and probit have
$s_0\mathfrak k_n=O(s_0/\sqrt n)=o(1)$, whereas Gamma and negative-binomial regression give
\[
 s_0\mathfrak k_n
 =O\left\{\frac{s_0}{\sqrt n}
 +\frac{s_0^2\sqrt{\log(en)}}{n}
 +\frac{s_0^{7/2}\log(en)}{n^{3/2}}\right\}=o(1).
\]
The last limits follow from $s_0^3/n\to0$ and
$n^{-1/3}\log(en)\to0$. Choose $M_n$ slowly enough that
$r_n\mathfrak k_n=o(1)$ as well. Gaussian regression has zero information errors.
Proposition~\ref{supp-pr:truth-centred-verification} now proves the claim.
For logistic, Poisson and probit the same bounds also give
\begin{equation}
 \sqrt{s_0}\eta_{\rm sel,n}
 \lesssim M_ns_n^\star\sqrt{\log p/n}=o(1),
 \label{supp-eq:supp-fixed-calibration}
\end{equation}
which verifies the true-Fisher replacement and growing-coordinate linearization condition.
\end{proof}

\subsection{Random-design verification}
\label{supp-sec:random-design-verification}

The random-design verification is stated in terms of the weighted rows that generate Fisher information, because sub-Gaussianity of $X_i$ alone need not control $\{w_i^0\}^{1/2}X_i$ in a nonlinear generalized linear model. Write $\mathbb E_{\mX}$ for expectation under $\mathbb P_{\mX}$, and $\mP_{\beta^0}(\,\cdot\mid\mX)$ and $\mE_{\beta^0}(\,\cdot\mid\mX)$ for conditional-response probability and expectation. For a scalar random variable $Z$ and $\nu>0$, write $\|Z\|_{\psi_\nu}=\inf\{c>0:\mathbb E_{\mX}\exp(|Z/c|^\nu)\le2\}$ and put $\mathcal H_n(s)=s\log(ep/s)+\log n$.

\begin{proposition}[Sub-Gaussian random-design geometry]
\label{supp-prop:random-design-geometry}
Suppose $X_1,\ldots,X_n$ are independent and put $V_i=(w_i^0)^{1/2}X_i$. For fixed $K_X,K_V<\infty$, assume
\[
\sup_i\sup_{\|u\|_2=1}\|u^\top X_i\|_{\psi_2}\le K_X,
\qquad
\sup_i\sup_{\|u\|_2=1}\|u^\top V_i\|_{\psi_2}\le K_V.
\]
Let $\overline\Sigma_n=n^{-1}\sum_i\mathbb E_{\mX}(V_iV_i^\top)$ and suppose its eigenvalues on every coordinate subspace of size at most $s_n^\star$ lie between fixed positive constants. If $s_n^\star=o(p)$ and $\mathcal H_n(s_n^\star)^2=o(n)$, then, with $\mathbb P_{\mX}$-probability tending to one, Assumption~\ref{ass:sparse-fisher} and the two empirical moment bounds in \eqref{eq:simple-spread-moments} hold. On the same event, for $s\in\{s_0,s_n^\star\}$, the following bounds hold with implied constants depending only on the displayed sub-Gaussian and population eigenvalue constants,
\begin{equation}\label{supp-eq:random-design-rates}
q_\star(s)\lesssim\sqrt{\frac{\mathcal H_n(s)}{n}},\qquad
\ell_\star(s)\lesssim\frac{\mathcal H_n(s)}{n},\qquad
\mathfrak t_\star(s)+\mathfrak v_\star(s)
\lesssim n^{-1/2}.
\end{equation}
\end{proposition}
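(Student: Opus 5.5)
The plan is to derive everything from one uniform sparse-covariance concentration bound for the weighted rows $V_i$, and then read off the design quantities. Put $\widehat\Sigma_n=n^{-1}\sum_iV_iV_i^\top=n^{-1}\mF^0$. This identity holds because $\mW_0$ has diagonal $w_i^0$, and for bounded-weight models $w_i^0$ is a deterministic function of $X_i^\top\beta^0$.

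\textbf{Step 1: sparse eigenvalues.} Fix a support $S$ with $|S|\le s_n^\star$ and a unit vector $u$. The variables $(u^\top V_{i,S})^2$ are sub-exponential with $\psi_1$-norm at most $K_V^2$. Bernstein's inequality then gives
\[
|u^\top(\widehat\Sigma_{n,S}-\overline\Sigma_{n,S})u|\le C\Bigl\{\sqrt{t/n}+t/n\Bigr\}
\]
with probability at least $1-2\e^{-ct}$. We take a $1/4$-net of the sphere on each support and union-bound over the $\binom{p}{s_n^\star}\le \exp\{s_n^\star\log(ep/s_n^\star)\}$ supports, with $t\asymp\mathcal H_n(s_n^\star)$. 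This yields
\[
\sup_S\|\widehat\Sigma_{n,S}-\overline\Sigma_{n,S}\|_{\rm op}\lesssim\sqrt{\mathcal H_n(s_n^\star)/n}=o(1).
\]
Combined with the population eigenvalue bounds, this proves Assumption~\ref{ass:sparse-fisher} on an event of probability tending to one.

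\textbf{Step 2: influence and leverage.} On the event of Step 1, $\|z_{i,S}\|_2\le\|X_{i,S}\|_2/\sqrt{nc_F}$ and $w_i^0\|z_{i,S}\|_2^2\le\|V_{i,S}\|_2^2/(nc_F)$. It therefore suffices to bound $\max_i\sup_{|S|\le s}\|X_{i,S}\|_2^2$ and the analogous quantity for $V_{i,S}$. Write $\|X_{i,S}\|_2=\sup_u u^\top X_{i,S}$ and use a net on each $s$-sparse sphere. The union is taken over $n$ rows and $\exp\{O(s\log(ep/s))\}$ net points. Sub-Gaussian tails then give
\[
\max_i\sup_{|S|\le s}\|X_{i,S}\|_2^2\lesssim\mathcal H_n(s),
\]
and likewise for $V$. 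This proves the first two bounds in \eqref{supp-eq:random-design-rates} for $s\in\{s_0,s_n^\star\}$.

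\textbf{Step 3: third and fourth moments.} For \eqref{eq:simple-spread-moments} and the last bound in \eqref{supp-eq:random-design-rates}, write $z_{i,S}^\top u=V_{i,S}^\top v/\sqrt n$ with $v=(n^{-1}\mF_S^0)^{-1/2}u/\sqrt{w_i^0}$. The $\sqrt{w_i^0}$ factors recombine: for the cubic term the weights give $w_i^0|z^\top u|^3=n^{-3/2}|V^\top v'|^3/\sqrt{w_i^0}$. To avoid that reciprocal weight, I will instead bound the mixed form $\mathfrak t_\star$, namely
\[
\sum_iw_i^0|z^\top a|(z^\top u)^2=n^{-3/2}\sum_i|X_{i,S}^\top a'|(V_{i,S}^\top u')^2 .
\]
Here $a',u'$ have bounded norm on the Step 1 event. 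Using bounded weights, the cubic form in \eqref{eq:simple-spread-moments} is bounded similarly via $w_i^0\le C$, giving $\sum_i w_i^0|z^\top u|^3\le Cn^{-3/2}\sum_i|V^\top u'|^3$.

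It thus remains to show, uniformly over $s_n^\star$-sparse unit vectors,
\[
n^{-1}\sum_i|X_i^\top a|(V_i^\top u)^2=O(1),\qquad n^{-1}\sum_i(V_i^\top u)^4=O(1).
\]
These give $\mathfrak t_\star\lesssim n^{-1/2}$ and $\mathfrak v_\star^2\lesssim n^{-1}$. The expectations are $O(1)$ by sub-Gaussianity. The summands are heavy-tailed, with $\psi_{2/3}$ and $\psi_{1/2}$ norms respectively. So I will truncate at level $\tau_n\asymp\mathcal H_n(s_n^\star)^2$ using the maximal bounds of Step 2: on the good event every summand lies below $\tau_n$. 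The truncated sums are then controlled by Bernstein with scale $\tau_n/n$ and a union over nets of size $\exp\{O(\mathcal H_n)\}$. This gives a deviation of order
\[
\sqrt{\mathcal H_n/n}+\tau_n\mathcal H_n/n,
\]
and the truncation bias is exponentially small.

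\textbf{Main obstacle.} I expect this uniform fourth-moment step to be the main obstacle, since the crude truncation seems to demand $\mathcal H_n^3=o(n)$ rather than the assumed $\mathcal H_n^2=o(n)$. I would first try to avoid a high-probability bound for the upper deviation. The cleaner route is a one-sided argument: a chaining or Lipschitz-net argument on the sphere (the quartic form is $4\max_i\|V_{i,S}\|^2$-Lipschitz in $u$ relative to the quadratic form), combined with Mendelson-type upper bounds for empirical fourth moments of sub-Gaussian vectors. Those give
\[
\sup_u n^{-1}\sum_i(V_i^\top u)^4\lesssim 1+\mathcal H_n^2/n .
\]
This is exactly where the hypothesis $\mathcal H_n(s_n^\star)^2=o(n)$ enters. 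The mixed cubic form then follows by Cauchy--Schwarz from the quartic bounds for $X$ and $V$.
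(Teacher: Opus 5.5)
Your architecture matches the paper's. Step~1 is Bernstein plus sparse nets for $V_i$. Step~2 bounds the row maxima by sub-Gaussian tails over nets and all $n$ rows. Step~3 substitutes $u'=\sqrt n(\mF_S^0)^{-1/2}u$, so that everything reduces to $\sup_u n^{-1}\sum_i(u^\top V_i)^4=O(1)$ over sparse unit $u$, with probability tending to one.

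That uniform fourth-moment bound is exactly where the proposal has a gap. Your truncation argument, as you note, needs $\mathcal H_n^3=o(n)$. Your replacement is only an appeal to unspecified Mendelson-type bounds together with a Lipschitz-net argument. The net part is not the difficulty: the paper extends the seminorm $u\mapsto\{n^{-1}\sum_i|u^\top V_i|^4\}^{1/4}$ from a fixed $1/8$-net at a constant-factor cost. What is missing is a tail bound for a single fixed direction, with deviation $o(1)$ at confidence $1-\e^{-C\mathcal H_n(s_n^\star)}$. This is the only place where the hypothesis $\mathcal H_n(s_n^\star)^2=o(n)$ can enter.

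The paper supplies this step with the sharp Bernstein-type inequality for sums of independent $\psi_{1/2}$ variables. The centred $(u^\top V_i)^4$ have $\psi_{1/2}$ norms bounded by $CK_V^4$. Theorem~3.1 of Kuchibhotla and Chakrabortty (2022) then gives deviation $C\{\sqrt{t/n}+t^2/n\}$ with probability at least $1-2\e^{-t}$; the paper uses the equivalent $L_q$ bound and Markov's inequality with $q\asymp\mathcal H_n$. The heavy-tail term is $t^2/n$, not the $\tau_nt/n\asymp\mathcal H_n^3/n$ that comes from truncating at the uniform row maximum $\tau_n\asymp\mathcal H_n^2$. So taking $t$ a large multiple of $\mathcal H_n(s_n^\star)$ and a union over the $\exp\{O(\mathcal H_n(s_n^\star))\}$ net points closes the argument.

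Your claimed bound $1+\mathcal H_n^2/n$ is also true and has an elementary proof:
\begin{itemize}
\item Bernstein for $\sum_{i\in I}(u^\top V_i)^2$, with a union over $|I|=k\le n$ and net points, gives $\max_{|I|=k}\sum_{i\in I}(u^\top V_i)^2\lesssim k\log(en/k)+\mathcal H_n$ uniformly.
\item Hence the $k$th largest $(u^\top V_i)^4$ is $\lesssim\log^2(en/k)+\mathcal H_n^2/k^2$.
\item Summing over $k$ gives $O(n+\mathcal H_n^2)$.
\end{itemize}
One of these two arguments has to be written out; as it stands, the crux is deferred rather than proved.

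Separately, you bound the cubic form in \eqref{eq:simple-spread-moments} using $w_i^0\le C$. The proposition makes no such assumption, and the inequality points the wrong way, since $w_i^0|z_{i,S}^\top u|^3=n^{-3/2}(w_i^0)^{-1/2}|V_{i,S}^\top u'|^3$ would need a lower bound on $w_i^0$. It is also unnecessary. The cubic form is your mixed form with $a=u$, and Cauchy--Schwarz controls that form using only two ingredients: the empirical second moment of $X_i$ (Step~1 applied to $X_i$) and the fourth moment of $V_i$.
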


\begin{samepage}
\begin{proposition}[Random-design likelihood bridge]
\label{supp-prop:random-design-bridge}
Consider the conditional generalized linear model given the random design. Under the corresponding restriction in Table~\ref{tab:six-models}, assume that the uniform generalized-linear-model envelope package \textnormal{(E-GLM)} in Subsection~\ref{supp-sec:fixed-design-verification} holds. Under the conditions of Proposition~\ref{supp-prop:random-design-geometry}, put $H_\star=\mathcal H_n(s_n^\star)$. Suppose $\log n=O(\log p)$. Then there are design events $\mathcal X_n$, with $\mathbb P_{\mX}(\mathcal X_n)\to1$, on which Assumption~\ref{ass:score} holds for the conditional response law given $\mX$, while the likelihood events required in Assumptions~\ref{ass:truth-lan} and \ref{ass:oracle-laplace} have conditional $\mP_{\beta^0}(\,\cdot\mid\mX)$-probability tending to one uniformly for $\mX\in\mathcal X_n$, provided
\begin{equation}\label{supp-eq:random-design-growth}
\frac{H_\star s_n^\star(s_n^\star+\log p)^2}
     {n\log p}\longrightarrow0.
\end{equation}
\end{proposition}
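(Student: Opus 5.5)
The plan is to reduce this statement to Proposition~\ref{supp-pr:truth-centred-verification}, applied conditionally on $\mX$. Concretely, I would build design events $\mathcal X_n$ on which every deterministic input of that proposition holds with constants that do not depend on $\mX$. I would then check that the two requirements in \eqref{supp-eq:supp-likelihood-requirements} follow from \eqref{supp-eq:random-design-growth}.

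\textbf{Step 1: the design events.} I would take $\mathcal X_n$ to be the event of Proposition~\ref{supp-prop:random-design-geometry}. On it we have:
\begin{itemize}[leftmargin=*]
\item Assumption~\ref{ass:sparse-fisher} holds.
\item The bounds \eqref{supp-eq:random-design-rates} hold: $q_\star(s)\lesssim\sqrt{\mathcal H_n(s)/n}$, $\ell_\star(s)\lesssim\mathcal H_n(s)/n$, and $\mathfrak t_\star+\mathfrak v_\star\lesssim n^{-1/2}$.
\end{itemize}
I would also intersect with the probability-one event on which the model restriction holds (for example the probit window). Then the envelopes \textnormal{(E-GLM)} hold with uniform constants, because they depend on $\mX$ only through $\eta_i^0$ and the tube of unit width around it.

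\textbf{Step 2: the radii.} As in the fixed-design proof, I would set $r_n^0=M_n\sqrt{s_n^\star\log p}$ and $r_n=M_n\sqrt{s_0}$, with $M_n\to\infty$ slowly. The conditions in \eqref{supp-eq:supp-model-radii} then need two things:
\begin{itemize}[leftmargin=*]
\item $s_n^\star\sqrt{\log p/n}\,M_n\to0$, which holds because $H_\star\ge s_n^\star$ and $\log p\le s_n^\star+\log p$ turn \eqref{supp-eq:random-design-growth} into $(s_n^\star)^2\log p=o(n)$;
\item $q_\star(s_n^\star)r_n^0\lesssim M_n\sqrt{H_\star s_n^\star\log p/n}\to0$, which also follows from \eqref{supp-eq:random-design-growth}.
\end{itemize}
The score domain gives $d_n\lesssim q_\star(s_n^\star)$, so $d_n\sqrt{s_n^\star\log p}=o(1)$, and Assumption~\ref{ass:score} follows.

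\textbf{Step 3: selection rate (the main bookkeeping).} Plugging \eqref{supp-eq:random-design-rates} into \eqref{supp-eq:supp-selection-grid-rate} gives
\[
\eta_{\rm sel,n}\lesssim \frac{M_n\sqrt{s_n^\star\log p}}{\sqrt n}+\sqrt{\frac{s_n^\star\log p}{n}}+\frac{H_\star s_n^\star\log p}{n}+\sqrt{\frac{H_\star}{n}}.
\]
I then multiply by $(s_0+\log p)/\log p$ and need the product to vanish. The leverage term is the critical one: it contributes $H_\star s_n^\star(s_0+\log p)/n$. Since $s_0+\log p\le(s_n^\star+\log p)^2/\log p$, this is $o(1)$ by \eqref{supp-eq:random-design-growth}; I expect this is exactly why that condition has its form. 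The remaining terms are dominated similarly, using $H_\star\ge s_n^\star$ and $(s_0+\log p)^2/\log p\le(s_n^\star+\log p)^2/\log p$.

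\textbf{Step 4: the oracle modulus.} From \eqref{supp-eq:supp-oracle-grid-modulus},
\[
s_0\mathfrak k_n\lesssim \frac{s_0}{\sqrt n}+\frac{s_0\sqrt{\mathcal H_n(s_0)}\sqrt{s_0\log(en)}}{n}+\cdots,
\]
and each term is again controlled by \eqref{supp-eq:random-design-growth}, using $\log n=O(\log p)$. The score and curvature events have conditional probability bounds that depend only on the envelope and design constants. The proof of Proposition~\ref{supp-pr:truth-centred-verification} notes this uniformity, so its failure probabilities are $o(1)$ uniformly over $\mX\in\mathcal X_n$.

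I expect the main obstacle to be confirming that nothing in the fixed-design argument secretly used $\max_i|X_{ij}|$ bounds beyond $q_\star$ and $\ell_\star$. If it did, I would add to $\mathcal X_n$ a maximal-inequality event $\max_{i,j}|X_{ij}|\lesssim\sqrt{\log(np)}$, which holds with probability tending to one by sub-Gaussianity.
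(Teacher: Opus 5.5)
Your proposal is correct and follows essentially the same route as the paper. Both take the geometry event of Proposition~\ref{supp-prop:random-design-geometry} as $\mathcal X_n$, insert \eqref{supp-eq:random-design-rates} into $\eta_{\rm sel,n}$ and $\mathfrak k_n$ with $r_n^0=M_n\sqrt{s_n^\star\log p}$ and $r_n=M_n\sqrt{s_0}$, and invoke the uniform version of Proposition~\ref{supp-pr:truth-centred-verification}; the only difference is bookkeeping, since the paper dominates every term by powers of the single quantity $A_n=\{(s_n^\star+\log p)/\log p\}\sqrt{H_\star s_n^\star\log p/n}$, whose square is the left side of \eqref{supp-eq:random-design-growth}, whereas you bound the terms one at a time.
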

\end{samepage}

Propositions~\ref{supp-prop:random-design-geometry} and \ref{supp-prop:random-design-bridge} allow the fixed-design conclusions to be applied conditionally on $\mX$ and then integrated over the design. In the common regime $s_0\le\log p$ and $\log n=O(\log p)$, a sufficient form of \eqref{supp-eq:random-design-growth} is $s_0^2(\log p)^2=o(n)$. For fixed $s_0$, this reduces to $(\log p)^2=o(n)$. These are likelihood conditions. Corollary~\ref{cor:primitive-six-models} adds the prior, slab and beta-min requirements.

\begin{proposition}[Poisson sub-Gaussian-design bridge]
\label{supp-prop:poisson-subgaussian-bridge}
Consider the Poisson regression. Suppose $X_1,\ldots,X_n$ are independent, uniformly sub-Gaussian rows, \mbox{$\|\beta^0\|_2\le B<\infty$}, and, for fixed $0<c_X<C_X<\infty$,
\[
 c_X\le \frac1n\sum_{i=1}^n\mathbb E_{\mX}(u^\top X_i)^2
 \le C_X
\]
for every unit $u$ with $\|u\|_0\le s_n^\star$. Put $H_\star=\mathcal H_n(s_n^\star)$. Suppose also that $s_n^\star=o(p)$ and $\log n=O(\log p)$. Then there are design events $\mathcal X_n$, with $\mathbb P_{\mX}(\mathcal X_n)\to1$, on which Assumption~\ref{ass:sparse-fisher} holds, Assumption~\ref{ass:score} holds for the conditional response law given $\mX$, and the likelihood events required in Assumptions~\ref{ass:truth-lan} and \ref{ass:oracle-laplace} have conditional $\mP_{\beta^0}(\,\cdot\mid\mX)$-probability tending to one uniformly for $\mX\in\mathcal X_n$, provided
\begin{equation}\label{supp-eq:poisson-subgaussian-growth}
 \frac{H_\star s_n^\star\log p}{n}\longrightarrow0,
 \qquad
 \frac{s_n^\star(s_n^\star+\log p)^2}
      {n\log p}\longrightarrow0.
\end{equation}
\end{proposition}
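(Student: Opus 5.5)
The plan is to reduce everything to Proposition~\ref{supp-pr:truth-centred-verification}, applied conditionally on the design, with weights $w_i^0=e^{\eta_i^0}$ that may be unbounded. The design event $\mathcal X_n$ will collect five bounds:
\begin{itemize}
\item sparse-eigenvalue bounds for $n^{-1}\mF_S^0$;
\item the influence and leverage rates $q_\star(s)\lesssim\sqrt{H_\star/n}$ and $\ell_\star(s)\lesssim H_\star/n$, up to a factor from the largest weight;
\item a uniform predictor bound $\max_i|\eta_i^0|\le C\sqrt{\log n}$;
\item the cubic spread bound $\mathfrak t_\star(s)\lesssim n^{-1/2}$;
\item the Poisson score domain bound, $d_n\lesssim q_\star(s_n^\star)$.
\end{itemize}
Since the Poisson link is canonical, $B_i\equiv0$. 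Only the first terms of \eqref{supp-eq:supp-selection-grid-rate}--\eqref{supp-eq:supp-oracle-grid-modulus} are needed, namely $\eta_{\rm sel,n}=C\mathfrak t_\star(s_n^\star)r_n^0$ and $\mathfrak k_n=C\mathfrak t_\star(s_0)$, together with $d_n\le Cq_\star(s_n^\star)$ from \eqref{supp-eq:supp-score-domain}. The envelope package (E-GLM) for Poisson was already checked without a predictor window in item (iii).

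\textbf{Step 1: Fisher geometry.} Because $\|\beta^0\|_2\le B$, the predictor $u^\top$-independent scalar $\eta_i^0=X_i^\top\beta^0$ is sub-Gaussian with $\psi_2$-norm at most $KB$. Hence $w_i^0=e^{\eta_i^0}$ has all moments bounded, and $\mathbb E_{\mX}w_i^0\ge e^{-C}$ by Jensen's inequality.

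The lower eigenvalue bound for $n^{-1}\sum_i w_i^0(u^\top X_i)^2$ will come from a truncation argument. Write $w_i^0\ge e^{-T}\mathbf 1\{|\eta_i^0|\le T\}$ for a fixed large $T$. Then $\mathbb E[(u^\top X_i)^2\mathbf 1\{|\eta_i^0|>T\}]$ is small uniformly in $u$ and $i$, by Cauchy--Schwarz and sub-Gaussian tails. So the population truncated form is bounded below by $e^{-T}c_X/2$. Uniform concentration of the truncated, sub-exponential summands over all sparse unit $u$ then follows from a net argument and Bernstein's inequality at cost $H_\star$. This is valid because $H_\star/n\to0$, which is implied by \eqref{supp-eq:poisson-subgaussian-growth}.

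For the upper bound, the summands $e^{\eta_i^0}(u^\top X_i)^2$ are heavy-tailed: they have finite moments of every order but are not sub-exponential. I would handle this by working on the event $\max_i|\eta_i^0|\le C\sqrt{\log n}$. On that event the summands are bounded by $n^{o(1)}$ times a sub-exponential variable. A truncated Bernstein bound then gives the upper eigenvalue bound. A simpler route is to bound the upper eigenvalue by $\max_i w_i^0$ times the unweighted sparse eigenvalue, but that loses too much, so I would use the truncated version.

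\textbf{Step 2: influence and spread.} By Step 1,
\[
q_\star(s)\le (nc_F)^{-1/2}\max_i\|X_{i,S}\|_2\lesssim\sqrt{H_\star/n}
\]
uniformly over $S\in\cS_n^+$, using a union bound over sub-Gaussian coordinate maxima. This gives $d_n\sqrt{s_n^\star\log p}\lesssim\sqrt{H_\star s_n^\star\log p/n}\to0$, which is the first condition in \eqref{supp-eq:poisson-subgaussian-growth}.

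For the cubic spread, write
\[
\mathfrak t_\star\le n^{-3/2}c_F^{-3/2}\sup\sum_i w_i^0|X_{i,S}^\top a|(X_{i,S}^\top u)^2 .
\]
The population version of the sum is $O(n)$ by moment bounds. Uniform concentration over sparse $(a,u)$ should again follow from truncation on the predictor event and nets, giving $\mathfrak t_\star(s)\lesssim n^{-1/2}$ provided $H_\star^{3/2}\,\mathrm{polylog}/n\to0$. I would try to absorb this into the stated rates; if that fails, I would instead accept a bound $\mathfrak t_\star\lesssim n^{-1/2}+H_\star^{3/2}n^{-3/2}$ and check that it is still sufficient.

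\textbf{Step 3: conclusion and the main obstacle.} Take $r_n^0=M_n\sqrt{s_n^\star\log p}$ and $r_n=M_n\sqrt{s_0}$. Then
\[
\eta_{\rm sel,n}(s_0+\log p)/\log p\lesssim M_n\sqrt{s_n^\star}(s_n^\star+\log p)/\sqrt{n\log p},
\]
which vanishes by the second condition in \eqref{supp-eq:poisson-subgaussian-growth}. Similarly $s_0\mathfrak k_n\lesssim s_0/\sqrt n\to0$. For the radius conditions \eqref{supp-eq:supp-model-radii}, I need $q_\star(s_n^\star)r_n^0\lesssim M_n\sqrt{H_\star s_n^\star\log p/n}\to0$, again from the first condition. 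The unit predictor tube is then valid, so $|A_i'|/w_i^0\le e$ there.

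The hardest step is the uniform weighted concentration in Steps 1--2, where $e^{X_i^\top\beta^0}$ has only log-normal-type tails. I would first condition on $\max_i|\eta_i^0|\le C\sqrt{\log n}$, which holds with probability tending to one. On that event I would apply Bernstein's inequality to the truncated summands, and control the expectation lost to truncation using $\mathbb E e^{q\eta_i^0}\le e^{Cq^2}$.
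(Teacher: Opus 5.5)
Your proposal is correct. Its skeleton matches the paper's proof:
\begin{enumerate}
\item the canonical link kills $B_i$, so only the $\mathfrak t_\star$ terms survive;
\item the lower Fisher bound comes from truncating the weight and concentrating the truncated sub-Gaussian rows over sparse nets (the paper truncates one-sidedly at $\eta_i^0\ge-L_0$, your two-sided version also works);
\item the influence bound comes from the row-norm maxima;
\item the score envelope is $d_n\lesssim\sqrt{H_\star/n}$;
\item Proposition~\ref{supp-pr:truth-centred-verification} closes the argument.
\end{enumerate}

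The genuine difference is how you control the heavy-tailed weighted moments, namely the upper Fisher eigenvalue and the cubic spread $\mathfrak t_\star$. The paper never concentrates a weighted sum. It decouples weight from direction,
\[
n^{-1}\sum_i e^{\eta_i^0}(u^\top X_i)^2\le\Bigl\{n^{-1}\sum_i e^{2\eta_i^0}\Bigr\}^{1/2}\Bigl\{n^{-1}\sum_i(u^\top X_i)^4\Bigr\}^{1/2},
\]
and uses H\"older with exponents $(4,4/3)$ for the cubic form. The only uniform-in-$u$ input is then the unweighted sparse fourth-moment bound from the proof of Proposition~\ref{supp-prop:random-design-geometry}, which is available once $H_\star^2=o(n)$. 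The weights enter only through the $u$-free scalar averages $n^{-1}\sum_ie^{r\eta_i^0}$, $r=2,4$, which Chebyshev handles. Mixed directions in $\mathfrak t_\star$ then follow from H\"older.

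You instead work on $\max_i|\eta_i^0|\le C\sqrt{\log n}$, which makes the weights $n^{o(1)}$. You then run Bernstein plus nets directly on the truncated weighted quadratic form, which is sub-exponential up to an $n^{o(1)}$ factor. For the cubic form the summands are only $\psi_{2/3}$, so you need a heavier-tailed Bernstein-type inequality or a further truncation of $X$. This works.

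The step you left hedged is in fact automatic. Since $H_\star\lesssim s_n^\star\log p$, the first condition in \eqref{supp-eq:poisson-subgaussian-growth} gives $H_\star^2=o(n)$, hence $H_\star^{3/2}n^{o(1)}=o(n)$. No fallback bound is needed.

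Comparing the two routes:
\begin{itemize}
\item The paper's route is shorter and never meets log-normal-type tails or the predictor event.
\item Your route avoids the $\psi_{1/2}$ fourth-moment inequality for the quadratic bound. It pays with $e^{C\sqrt{\log n}}$ bookkeeping and a $\psi_{2/3}$ inequality for the cubic bound.
\end{itemize}

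In Step~3 you should also record the remaining conditions you did not check: $\sqrt{s_n^\star}\,r_n^0/\sqrt n\to0$, $q_\star(s_0)r_n\to0$, $r_n=o(r_n^0)$ and $s_0^2=o(n)$ (the last is used for $s_0\mathfrak k_n\to0$). All follow from \eqref{supp-eq:poisson-subgaussian-growth}, because $H_\star\ge s_n^\star$ and $(s_n^\star+\log p)^2\ge4s_n^\star\log p$.
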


Unlike Proposition~\ref{supp-prop:random-design-geometry}, Proposition~\ref{supp-prop:poisson-subgaussian-bridge} does not assume that the Fisher-weighted rows are sub-Gaussian, a property that fails for Gaussian covariates when $\beta^0\ne0$. Its proof instead controls the random Fisher matrix directly from the unweighted rows. Because Poisson is canonical, $\ell_\star$ and $\mathfrak v_\star$ are not needed.
For Gaussian, logistic, bounded-window probit, Gamma log-link and fixed-size negative-binomial regression, a uniform upper bound on $w_i^0$ makes the weighted-row condition in Proposition~\ref{supp-prop:random-design-geometry} automatic from sub-Gaussianity of $X_i$. The same route covers Poisson under $\sup_{i\le n}X_i^\top\beta^0\le L$ $\mathbb P_{\mX}$-almost surely, while Proposition~\ref{supp-prop:poisson-subgaussian-bridge} covers unbounded Fisher weights under bounded $\|\beta^0\|_2$. Thus Propositions~\ref{supp-prop:random-design-geometry}--\ref{supp-prop:random-design-bridge}, or the Poisson-specific bridge, provide random-design verifications for all six models.

\begin{proof}[Proof of Proposition~\ref{prop:simple-random-design}]
Put $L_n=\log p$. Under the standing conditions $s_n^\star=(K_0+1)s_0$ and $\log n=O(L_n)$, the restriction $s_0\le L_n$ gives $H_\star=\mathcal H_n(s_n^\star)\lesssim s_0L_n$ and $s_n^\star+L_n\lesssim L_n$. Consequently, \eqref{eq:simple-random-growth} implies
\[
 \frac{H_\star^2}{n}
 +\frac{H_\star s_n^\star(s_n^\star+L_n)^2}{nL_n}
 +\frac{H_\star s_n^\star L_n}{n}
 \lesssim\frac{s_0^2L_n^2}{n}\longrightarrow0.
\]
In the bounded-weight route, the stated model restrictions give $w_i^0\le W$ for a fixed $W<\infty$. Thus, for every unit $u$, $\|u^\top V_i\|_{\psi_2} \le\sqrt W\,\|u^\top X_i\|_{\psi_2}$. The weighted-row condition follows, and Propositions~\ref{supp-prop:random-design-geometry} and \ref{supp-prop:random-design-bridge} apply with the imposed population Fisher bounds. Subsection~\ref{supp-sec:six-model-cards} supplies the model-specific likelihood envelopes. In the unbounded-weight Poisson route, the preceding growth bound also verifies the first condition in \eqref{supp-eq:poisson-subgaussian-growth}. Moreover,
\[
 \frac{s_n^\star(s_n^\star+L_n)^2}{nL_n}
 \lesssim\frac{s_0L_n}{n}\longrightarrow0,
\]
so Proposition~\ref{supp-prop:poisson-subgaussian-bridge} applies. The corresponding good-design events and uniform conditional-response bounds prove the statement. The model-size prior, slab and signal conditions remain the separate requirements of Corollary~\ref{cor:primitive-six-models}.
\end{proof}

\begin{proof}[Proof of Proposition~\ref{supp-prop:random-design-geometry}]
For each coordinate set, fix a $1/8$-net of its unit sphere. The total number of points over sets of size at most $s$ is at most $\exp\{Cs\log(ep/s)\}$. For every fixed unit $u$, the centred variables $(u^\top V_i)^2-\mathbb E_{\mX}(u^\top V_i)^2$ have bounded $\psi_1$ norms, so Bernstein's inequality gives, for fixed $t>0$,
\begin{equation*}
 \mathbb P_{\mX}\left(
 \left|\frac1n\sum_i(u^\top V_i)^2
 -u^\top\overline\Sigma_nu\right|>t\right)
 \le2\exp(-c_tn).
\end{equation*}
Take $t$ to be a fixed fraction of the population eigenvalue lower bound. A union bound and the symmetric-matrix net extension give Assumption~\ref{ass:sparse-fisher}, since $s_n^\star\log(ep/s_n^\star)=o(n)$. Applied to $X_i$, the same argument gives
\begin{equation}
 \sup_{\substack{|S|\le s_n^\star\\\|u\|_2=1}}
 \frac1n\sum_{i=1}^n(X_{i,S}^\top u)^2\le C
 \label{supp-eq:supp-random-second-moment}
\end{equation}
with probability tending to one.

Uniform sub-Gaussianity gives $\sup_{i,u}\mathbb E_{\mX}|u^\top V_i|^4\le C$, and the centred fourth powers have uniformly bounded $\psi_{1/2}$ norms. Applying Theorem~3.1 of \citet{kuchibhotla2022moving} and integrating its tail bound gives, for fixed unit $u$ and $q\ge2$,
\[
 \left\|\frac1n\sum_i\left\{(u^\top V_i)^4
 -\mathbb E_{\mX}(u^\top V_i)^4\right\}\right\|_q
 \le C\left\{\sqrt{q/n}+q^2/n\right\}.
\]
Choose $q$ as a sufficiently large multiple of $\mathcal H_n(s_n^\star)$. Markov's inequality, the same nets and $\mathcal H_n(s_n^\star)^2=o(n)$ bound the empirical fourth moments uniformly with probability tending to one. Extending the seminorm $u\mapsto\{n^{-1}\sum_i|u^\top V_i|^4\}^{1/4}$ from the nets gives
\begin{equation}
 \sup_{\substack{|S|\le s_n^\star\\\|u\|_2=1}}
 \frac1n\sum_{i=1}^n(V_{i,S}^\top u)^4\le C.
 \label{supp-eq:supp-random-fourth-moment}
\end{equation}

Applying sub-Gaussian tails to the nets and all $n$ rows adds $\log n$ to the logarithmic union size. Thus, for $s\in\{s_0,s_n^\star\}$,
\begin{equation*}
 \max_{i\le n}\max_{|S|\le s}\|X_{i,S}\|_2
 \le C\{\mathcal H_n(s)\}^{1/2},
 \qquad
 \max_{i\le n}\max_{|S|\le s}\|V_{i,S}\|_2
 \le C\{\mathcal H_n(s)\}^{1/2}
\end{equation*}
with probability tending to one. Since $z_{i,S}=(\mF_S^0)^{-1/2}X_{i,S}$ and $\lambda_{\min}(\mF_S^0)\ge nc_F$, these bounds imply the stated rates for $q_\star(s)$ and $\ell_\star(s)$. For average spread, put $b=\sqrt n(\mF_S^0)^{-1/2}u$ for unit $u$. Then $\|b\|_2\le C$ and $z_{i,S}^\top u=n^{-1/2}X_{i,S}^\top b$. Cauchy--Schwarz and \eqref{supp-eq:supp-random-second-moment}--\eqref{supp-eq:supp-random-fourth-moment} give
\begin{align*}
 \sum_iw_i^0|z_{i,S}^\top u|^3
 &\le n^{-1/2}
 \left\{\frac1n\sum_i(V_{i,S}^\top b)^4\right\}^{1/2}
 \left\{\frac1n\sum_i(X_{i,S}^\top b)^2\right\}^{1/2}
 \lesssim n^{-1/2},\\
 \sum_i(w_i^0)^2(z_{i,S}^\top u)^4
 &=n^{-2}\sum_i(V_{i,S}^\top b)^4\lesssim n^{-1}.
\end{align*}
These are the uniform bounds in \eqref{eq:simple-spread-moments}. The H\"older argument in the proof of Proposition~\ref{prop:simple-fixed-design} gives $\mathfrak t_\star(s)+\mathfrak v_\star(s)\lesssim n^{-1/2}$ for $s\in\{s_0,s_n^\star\}$.
\end{proof}

\begin{proof}[Proof of Proposition~\ref{supp-prop:random-design-bridge}]
On the geometry event, use the deterministic bounds in \eqref{supp-eq:random-design-rates} to form $\eta_{\rm sel,n}$ and $\mathfrak k_n$. Put
\[
 A_n=\frac{s_n^\star+\log p}{\log p}
       \sqrt{\frac{H_\star s_n^\star\log p}{n}}.
\]
Condition \eqref{supp-eq:random-design-growth} is $A_n\to0$. Choose a common subpolynomial
$M_n=M_n^0\to\infty$ so slowly that $M_nA_n\to0$, and set
$r_n^0=M_n\sqrt{s_n^\star\log p}$ and $r_n=M_n\sqrt{s_0}$.
Then
\[
 q_\star(s_n^\star)r_n^0+
 \frac{\sqrt{s_n^\star}r_n^0}{\sqrt n}
 +q_\star(s_0)r_n\lesssim M_nA_n=o(1),
 \qquad r_n/r_n^0=O\{(\log p)^{-1/2}\}.
\]
The selection bound becomes
\begin{equation*}
 \eta_{\rm sel,n}\lesssim
 M_n\sqrt{\frac{s_n^\star\log p}{n}}
 +\frac{H_\star s_n^\star\log p}{n}
 +\sqrt{\frac{H_\star}{n}},
\end{equation*}
so $(s_0+\log p)\eta_{\rm sel,n}/\log p
=O(M_nA_n+A_n^2+A_n)=o(1)$.

For the oracle bound, $\mathcal H_n(s_0)\le H_\star$ and
$\log(en)\lesssim\log p$ give
\[
 s_0\mathfrak k_n\lesssim
 \frac{s_0}{\sqrt n}
 +\frac{s_0\sqrt{H_\star s_0\log(en)}}{n}
 +s_0^2\left(\frac{H_\star}{n}\right)^{3/2}\log(en)
 +\frac{\sqrt{H_\star}}{n}
 \lesssim A_n+A_n^2+A_n^3.
\]
Hence $s_0\mathfrak k_n=o(1)$, and
Proposition~\ref{supp-pr:truth-centred-verification} applies.
In Assumption~\ref{ass:score} take the deterministic envelope $d_n=C\sqrt{H_\star/n}$.
All bounds hold with fixed constants on the geometry event. Denote this event by $\mathcal X_n$.
The uniformity in that proposition therefore supplies a deterministic
$\delta_n\to0$ bounding the conditional response-failure probability for every
$\mX\in\mathcal X_n$. Since $\mathbb P_{\mX}(\mathcal X_n)\to1$, this proves the conditional statement and, by integration, its joint-law version.
\end{proof}

\begin{proof}[Proof of Proposition~\ref{supp-prop:poisson-subgaussian-bridge}]
Let $\mathcal U_s=\{u:\|u\|_2=1,\ \|u\|_0\le s\}$.
The first condition in \eqref{supp-eq:poisson-subgaussian-growth} implies
$H_\star^2=o(n)$ because $H_\star\lesssim s_n^\star\log p$.
The fourth-moment and row bounds in the proof of
Proposition~\ref{supp-prop:random-design-geometry}, applied to the unweighted rows, therefore give
\[
 \sup_{u\in\mathcal U_{s_n^\star}}\frac1n\sum_i(u^\top X_i)^4\le C,
 \qquad
 \max_{i\le n}\max_{|S|\le s}\|X_{i,S}\|_2
 \le C\sqrt{\mathcal H_n(s)},\quad s\in\{s_0,s_n^\star\},
\]
with probability tending to one. Moreover,
$\|\eta_i^0\|_{\psi_2}\le K_XB$ bounds all fixed exponential moments.
Independence and Chebyshev's inequality give
$n^{-1}\sum_i e^{r\eta_i^0}\le C$ for $r=2,4$ on the same type of event.
Cauchy--Schwarz and H\"older's inequality yield
\begin{equation}
 \sup_{u\in\mathcal U_{s_n^\star}}\frac1n\sum_i
 e^{\eta_i^0}(u^\top X_i)^2\le C,\qquad
 \sup_{u\in\mathcal U_{s_n^\star}}\frac1n\sum_i
 e^{\eta_i^0}|u^\top X_i|^3\le C.
 \label{supp-eq:supp-poisson-weighted-moments}
\end{equation}
For the cubic bound, use the fourth exponential moment and the empirical fourth moment of $u^\top X_i$.

The lower Fisher bound needs a separate argument. Choose a fixed $L_0$ so large that, uniformly over sparse unit $u$,
\[
 \frac1n\sum_i\mathbb E_{\mX}
 [(u^\top X_i)^2\mathbf1\{\eta_i^0<-L_0\}]\le c_X/2.
\]
This follows from Cauchy--Schwarz, bounded fourth moments and the sub-Gaussian tail of $\eta_i^0$.
The truncated rows $\widetilde X_i=X_i\mathbf1\{\eta_i^0\ge-L_0\}$ remain uniformly sub-Gaussian and have a sparse population second-moment lower bound $c_X/2$.
The sparse second-moment concentration argument in the preceding geometry proof applies to these independent rows, giving
\[
 \inf_{u\in\mathcal U_{s_n^\star}}\frac1n\sum_i(u^\top\widetilde X_i)^2
 \ge c_X/4
\]
with probability tending to one. Since $e^{\eta_i^0}\ge e^{-L_0}$ on the retained observations, this and \eqref{supp-eq:supp-poisson-weighted-moments} prove Assumption~\ref{ass:sparse-fisher} for
$\mF_S^0=\sum_i e^{\eta_i^0}X_{i,S}X_{i,S}^\top$.
The Fisher lower bound and the row bound give the influence estimate below.
To obtain average spread, apply the cubic bound in
\eqref{supp-eq:supp-poisson-weighted-moments} to the vector
$\sqrt n(\mF_S^0)^{-1/2}u$, whose norm is uniformly bounded, and then use H\"older's inequality for the mixed directions.
Thus there are design events $\mathcal X_n$, with
$\mathbb P_{\mX}(\mathcal X_n)\to1$, on which Assumption~\ref{ass:sparse-fisher} and
\begin{equation*}
 q_\star(s)\le C\sqrt{\mathcal H_n(s)/n},\qquad
 \mathfrak t_\star(s)\le Cn^{-1/2},\qquad s\in\{s_0,s_n^\star\},
\end{equation*}
hold with fixed constants.

For the conditional likelihood, the Poisson mgf calculation in
Subsection~\ref{supp-sec:six-model-cards} and \eqref{supp-eq:supp-score-domain} give
Assumption~\ref{ass:score} with the deterministic envelope $d_n=C\sqrt{H_\star/n}$.
Choose a common subpolynomial $M_n=M_n^0\to\infty$ so slowly that
\[
 M_n\left\{\sqrt{\frac{H_\star s_n^\star\log p}{n}}
 +\sqrt{\frac{s_n^\star(s_n^\star+\log p)^2}{n\log p}}\right\}\to0,
\]
and set $r_n^0=M_n\sqrt{s_n^\star\log p}$ and $r_n=M_n\sqrt{s_0}$.
The design bounds give \eqref{supp-eq:supp-model-radii}.
Since Poisson is canonical, \eqref{supp-eq:supp-canonical-rates} gives deterministic envelopes $\eta_{\rm sel,n}=CM_n\sqrt{s_n^\star\log p/n}$ and $\mathfrak k_n=Cn^{-1/2}$.
The choice of $M_n$ implies
\[
 \frac{(s_0+\log p)\eta_{\rm sel,n}}{\log p}\to0,\qquad
 s_0\mathfrak k_n\to0,\qquad
 \sqrt{s_0}\eta_{\rm sel,n}\to0.
\]
Proposition~\ref{supp-pr:truth-centred-verification} therefore gives Assumption~\ref{ass:truth-lan} and the likelihood part of Assumption~\ref{ass:oracle-laplace}, with deterministic $\bar\kappa_n^0=Cn^{-1/2}$.
The last limit also verifies the true-Fisher replacement condition.
The constants are uniform on $\mathcal X_n$, so the conditional response-failure probabilities tend to zero uniformly there, as required.
Neither $\ell_\star$ nor $\mathfrak v_\star$ enters this argument because the residual-information term vanishes.
\end{proof}

The assumptions of Proposition~\ref{supp-prop:poisson-subgaussian-bridge} do not require centred rows. Thus nonzero means and deterministic bounded coordinates are permitted under the stated sub-Gaussian and sparse second-moment bounds. In particular, the proposition applies to independent $N_p(0,\Sigma)$ rows when $\|\beta^0\|_2\le B$, $\|\Sigma\|_{\rm op}\le C_X$, the minimum eigenvalues of $\Sigma_{SS}$ are uniformly bounded away from zero over $|S|\le s_n^\star$, and \eqref{supp-eq:poisson-subgaussian-growth} holds.

\subsection{Prior and inferential consequences}
\label{supp-sec:model-verification-results}
\label{supp-sec:logistic-benchmark-comparison}

\begin{proof}[Proof of Corollary~\ref{cor:primitive-six-models}]
The fixed- or random-design criterion supplies Assumptions~\ref{ass:sparse-fisher}--\ref{ass:truth-lan} and the likelihood part of Assumption~\ref{ass:oracle-laplace}. Proposition~\ref{prop:slab-verification} supplies Assumption~\ref{ass:slab} at the same selection radius and the oracle slab condition in Assumption~\ref{ass:oracle-laplace}. With the imposed model-size prior and signal conditions, Theorems~\ref{thm:selection}--\ref{thm:oracle-bvm} and Corollaries~\ref{cor:posterior-contraction}--\ref{cor:selected-coverage} apply. For random designs, integrate the uniform conditional bounds over the good-design events. The frequentist-coverage conclusions retain the stated sampling-calibration conditions.
\end{proof}

\paragraph{Prior thresholds and the logistic benchmark}
For the fixed slabs, Proposition~\ref{prop:slab-verification} gives the polynomial truth-density lower bound, bounded selection-neighbourhood variation and oracle slab flatness. The radius choice \eqref{supp-eq:supp-fixed-design-radii} is sufficient for these conditions. The support-layer calculations in Lemma~\ref{supp-le:rec-support-control} therefore apply for $a_+>1$ and sufficiently large $K_0$, and the logarithmic deletion cost in \eqref{supp-eq:rec-deletion-cost} is $O(\log p)$ per missing coordinate. Thus $nb_0^2\ge C_\alpha\log p$ with sufficiently large $C_\alpha$ gives the required signal separation.
For the logistic benchmark, the fixed-scale Laplace slab and $\|\beta^0\|_\infty=O(\log p)$ meet these conditions with $a_-=a_+=a>1$. Proposition~\ref{supp-prop:primitive-bridge} verifies the likelihood assumptions, including the radius choices. Equation~\eqref{supp-eq:supp-fixed-calibration} supplies the true-Fisher replacement and growing-coordinate linearization condition. The moment bound below supplies Lyapunov's condition, completing the benchmark's inference claims.

Table~\ref{tab:closest-work} retains the remaining design and prior conditions of each comparator. In the notation of \citet{tang2024empirical}, bounded information factors permit $\tau=0$. Taking $\tau'=1/3$ and $s_n=\lfloor(n/\log p)^{1/3}\rfloor$, the rate $s_0^3\log p=o(n)$ gives $cs_0\le s_n$ eventually for every fixed $c>1$. This satisfies the support-size restrictions in their Condition~2 and Theorem~4, with their information assumptions imposed over that cutoff range. Their Theorem~5 gives the displayed Gaussian approximation under the same cubic rate. For \citet{lee2025advances}, retain Assumptions~(A1)--(A7) with $s_{\max}\asymp s_0$. In particular, their unweighted cubic-moment bound in (5.5) is imposed separately from our weighted average-spread conditions. Under the stated bounded factors, the second term in (5.6) gives $s_0^3\log p=o(n)$, the rate restriction in (A6) is weaker, and (5.20) gives the coordinatewise beta-min order. For our result, $s_0\le\log p$ makes $s_0^3=o(n\log p)$ follow from $s_0^2\log p=o(n)$. The table compares sufficient signal and sparsity conditions under these qualifications; each analysis retains its remaining assumptions.

\paragraph{Lyapunov verification}
Put $U_i=\tau_i^{-1}\xi'(\eta_i^0)(Y_i-\mu_i^0)$, so $\zeta_i=z_{i,S_0}U_i$.
For Gaussian, logistic and bounded-window probit regression,
$\mE_{\beta^0}|U_i|^3\le Cw_i^0$ follows from the Gaussian third moment or bounded responses and the stated predictor window. For upper-window Poisson regression,
$\mE_{\beta^0}|Y_i-\mu_i^0|^3\le\mu_i^0\sqrt{1+3\mu_i^0}\le Cw_i^0$.
Gamma scale invariance and bounded dispersion give the same normalized bound.
For negative-binomial regression, use
$U_i=\kappa(Y_i-\mu_i^0)/(\kappa+\mu_i^0)$ and
$\mE_{\beta^0}|Y_i-\mu_i^0|^3\le C\mu_i^0(1+\mu_i^0)^2$.
Consequently, Fisher normalization gives in all six cases
\[
 \sup_{\|v\|_2=1}\sum_i\mE_{\beta^0}|v^\top\zeta_i|^3
 \le Cq_\star(s_0)\sup_{\|v\|_2=1}
       \sum_iw_i^0(v^\top z_{i,S_0})^2
 =Cq_\star(s_0).
\]
Thus $q_\star(s_0)=o(1)$ verifies the Lyapunov conditions with $\delta=1$.
The fixed-design bound $q_\star(s_0)=O\{(s_0/n)^{1/2}\}$ makes this automatic for fixed $s_0$.
Growing-coordinate coverage additionally requires $\varepsilon_n\sqrt{s_0}=o(1)$.
The unbounded-weight Poisson route retains its separately imposed Lyapunov condition.

\end{document}